\documentclass[11pt]{preprint}
\usepackage[full]{textcomp}
\usepackage[osf]{newtxtext} 
\usepackage[cal=boondoxo]{mathalfa}
\usepackage{colortbl}
\usepackage{comment}

\usepackage{amssymb}
\usepackage{mathtools}

\usepackage{hyperref}
\usepackage{breakurl}
\usepackage{mhenvs}
\usepackage{mhequ}
\usepackage{mhsymb}
\usepackage{booktabs}
\usepackage{tikz}
\usepackage{tcolorbox}
\usepackage{mathrsfs}
\usepackage[utf8]{inputenc}
\usepackage{longtable}
\usepackage{wrapfig}

\usepackage{microtype}
\usepackage{wasysym}
\usepackage{centernot}
\usepackage{enumitem}
\usepackage{bm}
\usepackage{stackrel}
\usepackage{graphicx}

\makeatletter
\newcommand{\globalcolor}[1]{%
  \color{#1}\global\let\default@color\current@color
}
\makeatother

\usetikzlibrary{calc}
\usetikzlibrary{decorations}
\usetikzlibrary{positioning}
\usetikzlibrary{shapes}
\usetikzlibrary{external}

\newif\ifdark
%\IfFileExists{dark}{\darktrue}{\darkfalse}
\darkfalse

\ifdark

\definecolor{darkred}{rgb}{0.9,0.2,0.2}
\definecolor{darkblue}{rgb}{0.7,0.3,1}
\definecolor{darkgreen}{rgb}{0.1,0.9,0.1}
\definecolor{franck}{rgb}{0,0.8,1}
\definecolor{pagebackground}{rgb}{.15,.21,.18}
\definecolor{pageforeground}{rgb}{.84,.84,.85}
\pagecolor{pagebackground}
\AtBeginDocument{\globalcolor{pageforeground}}
%\tikzexternalize[prefix=tikzDark/]
\definecolor{symbols}{rgb}{0,0.7,1}
\colorlet{connection}{red!80!black}
\colorlet{boxcolor}{blue!50}

\else

\definecolor{darkred}{rgb}{0.7,0.1,0.1}
\definecolor{darkblue}{rgb}{0.4,0.1,0.8}
\definecolor{darkgreen}{rgb}{0.1,0.7,0.1}
\definecolor{franck}{rgb}{0,0,1}
\definecolor{pagebackground}{rgb}{1,1,1}
\definecolor{pageforeground}{rgb}{0,0,0}
%\tikzexternalize[prefix=tikz/]
\colorlet{symbols}{blue!90!black}
\colorlet{connection}{red!30!black}
\colorlet{boxcolor}{blue!50!black}

\fi

\def\slash{\leavevmode\unskip\kern0.18em/\penalty\exhyphenpenalty\kern0.18em}
\def\dash{\leavevmode\unskip\kern0.18em--\penalty\exhyphenpenalty\kern0.18em}

\DeclareMathAlphabet{\mathbbm}{U}{bbm}{m}{n}

\DeclareFontFamily{U}{BOONDOX-calo}{\skewchar\font=45 }
\DeclareFontShape{U}{BOONDOX-calo}{m}{n}{
  <-> s*[1.05] BOONDOX-r-calo}{}
\DeclareFontShape{U}{BOONDOX-calo}{b}{n}{
  <-> s*[1.05] BOONDOX-b-calo}{}
\DeclareMathAlphabet{\mcb}{U}{BOONDOX-calo}{m}{n}
\SetMathAlphabet{\mcb}{bold}{U}{BOONDOX-calo}{b}{n}
%\DeclareMathAlphabet{\mathbcalboondox}{U}{BOONDOX-calo}{b}{n}

\setlist{noitemsep,topsep=4pt,leftmargin=1.5em}

\DeclareMathAlphabet{\mathbbm}{U}{bbm}{m}{n}

\DeclareMathAlphabet{\mcb}{U}{BOONDOX-calo}{m}{n}
\SetMathAlphabet{\mcb}{bold}{U}{BOONDOX-calo}{b}{n}
\DeclareFontFamily{U}{mathx}{\hyphenchar\font45}
\DeclareFontShape{U}{mathx}{m}{n}{
      <5> <6> <7> <8> <9> <10>
      <10.95> <12> <14.4> <17.28> <20.74> <24.88>
      mathx10
      }{}
\DeclareSymbolFont{mathx}{U}{mathx}{m}{n}
\DeclareMathSymbol{\bigtimes}{1}{mathx}{"91}

\setlength{\marginparwidth}{3cm}

\providecommand{\figures}{false}
{ \ifthenelse{\equal{\figures}{false}} {#1}{\[ {\rm Figure \ missing !} \]} }{}
\def\id{\mathrm{id}}

\def\CC{\mathcal{C}}

\tikzstyle{tinydots}=[dash pattern=on \pgflinewidth off \pgflinewidth]
\tikzstyle{superdense}=[dash pattern=on 4pt off 1pt]

%mathcals

%\newcommand{\G}{\mathcal{G}}

%\newcommand{\mcT}{\mathcal{T}}

%mathbbs

%mathbfs

\newcommand{\mbn}{\mathbf{n}}

\def\eps{\varepsilon}

%mathfraks

\def\${|\!|\!|}

\newenvironment{DIFnomarkup}{}{} % see man latexdiff

\theorembodyfont{\rmfamily}
\newtheorem{example}[lemma]{Example}

\newfont{\indic}{bbmss12}

\def\PPi{\boldsymbol{\Pi}}

\def\Nabla_#1{\nabla_{\!#1}}

%%%%%%%%%%%%%%%%%%%%%%%%%%%%%%%%%%%%%%%%%%%%%%%%%%%%%%%%
%
%
%              Some tikz code to draw nice trees
%
%
%%%%%%%%%%%%%%%%%%%%%%%%%%%%%%%%%%%%%%%%%%%%%%%%%%%%%%%%

\makeatletter
\pgfdeclareshape{crosscircle}
{
  \inheritsavedanchors[from=circle] % this is nearly a circle
  \inheritanchorborder[from=circle]
  \inheritanchor[from=circle]{north}
  \inheritanchor[from=circle]{north west}
  \inheritanchor[from=circle]{north east}
  \inheritanchor[from=circle]{center}
  \inheritanchor[from=circle]{west}
  \inheritanchor[from=circle]{east}
  \inheritanchor[from=circle]{mid}
  \inheritanchor[from=circle]{mid west}
  \inheritanchor[from=circle]{mid east}
  \inheritanchor[from=circle]{base}
  \inheritanchor[from=circle]{base west}
  \inheritanchor[from=circle]{base east}
  \inheritanchor[from=circle]{south}
  \inheritanchor[from=circle]{south west}
  \inheritanchor[from=circle]{south east}
  \inheritbackgroundpath[from=circle]
  \foregroundpath{
    \centerpoint%
    \pgf@xc=\pgf@x%
    \pgf@yc=\pgf@y%
    \pgfutil@tempdima=\radius%
    \pgfmathsetlength{\pgf@xb}{\pgfkeysvalueof{/pgf/outer xsep}}%  
    \pgfmathsetlength{\pgf@yb}{\pgfkeysvalueof{/pgf/outer ysep}}%  
    \ifdim\pgf@xb<\pgf@yb%
      \advance\pgfutil@tempdima by-\pgf@yb%
    \else%
      \advance\pgfutil@tempdima by-\pgf@xb%
    \fi%
    \pgfpathmoveto{\pgfpointadd{\pgfqpoint{\pgf@xc}{\pgf@yc}}{\pgfqpoint{-0.707107\pgfutil@tempdima}{-0.707107\pgfutil@tempdima}}}
    \pgfpathlineto{\pgfpointadd{\pgfqpoint{\pgf@xc}{\pgf@yc}}{\pgfqpoint{0.707107\pgfutil@tempdima}{0.707107\pgfutil@tempdima}}}
    \pgfpathmoveto{\pgfpointadd{\pgfqpoint{\pgf@xc}{\pgf@yc}}{\pgfqpoint{-0.707107\pgfutil@tempdima}{0.707107\pgfutil@tempdima}}}
    \pgfpathlineto{\pgfpointadd{\pgfqpoint{\pgf@xc}{\pgf@yc}}{\pgfqpoint{0.707107\pgfutil@tempdima}{-0.707107\pgfutil@tempdima}}}
  }
}
\makeatother

\def\symbol#1{\textcolor{symbols}{#1}}

\def\decorate#1#2{
        \ifnum#2>0
    		\foreach \count in {1,...,#2}{
	       	let
				\p1 = (sourcenode.center),
                \p2 = (sourcenode.east),
				\n1 = {\x2-\x1},
				\n2 = {1mm},
				\n3 = {(1.3+0.6*(\count-1))*\n1},
				\n4 = {0.7*\n1}
			in 
        		node[rectangle,fill=symbols,rotate=30,inner sep=0pt,minimum width=0.2*\n2,minimum height=\n2] at ($(sourcenode.center) + (\n3,\n4)$) {}
				}
		\fi
        \ifnum#1>0
    		\foreach \count in {1,...,#1}{
	       	let
				\p1 = (sourcenode.center),
                \p2 = (sourcenode.east),
				\n1 = {\x2-\x1},
				\n2 = {1mm},
				\n3 = {(1.3+0.6*(\count-1))*\n1},
				\n4 = {0.7*\n1}
			in 
        		node[rectangle,fill=symbols,rotate=-30,inner sep=0pt,minimum width=0.2*\n2,minimum height=\n2] at ($(sourcenode.center) + (-\n3,\n4)$) {}
				}
		\fi
}

\tikzset{
    dectriangle/.style 2 args={
        triangle,
        alias=sourcenode,
        append after command={\decorate{#1}{#2}}
    },
    dectriangle/.default={0}{0},
}

\tikzset{
	cross/.style={path picture={ 
  		\draw[symbols]
			(path picture bounding box.south east) -- (path picture bounding box.north west) (path picture bounding box.south west) -- (path picture bounding box.north east);
		}},
root/.style={circle,fill=green!50!black,inner sep=0pt, minimum size=1.2mm},
        dot/.style={circle,fill=pageforeground,inner sep=0pt, minimum size=1mm},
        blank/.style={circle,fill=white,inner sep=0pt, minimum size=1mm},
        dotred/.style={circle,fill=pageforeground!50!pagebackground,inner sep=0pt, minimum size=2mm},
        var/.style={circle,fill=pageforeground!10!pagebackground,draw=pageforeground,inner sep=0pt, minimum size=3mm},
        sqvar/.style={rectangle,fill=pageforeground!10!pagebackground,draw=pageforeground,inner sep=0pt, minimum size=3mm},
        kernel/.style={semithick,shorten >=2pt,shorten <=2pt},
        kernels/.style={snake=zigzag,shorten >=2pt,shorten <=2pt,segment amplitude=1pt,segment length=4pt,line before snake=2pt,line after snake=5pt,},
        rho/.style={densely dashed,semithick,shorten >=2pt,shorten <=2pt},
           testfcn/.style={dotted,semithick,shorten >=2pt,shorten <=2pt},
        renorm/.style={shape=circle,fill=pagebackground,inner sep=1pt},
        labl/.style={shape=rectangle,fill=pagebackground,inner sep=1pt},
        xic/.style={very thin,circle,draw=symbols,fill=symbols,inner sep=0pt,minimum size=1.2mm},
        g/.style={very thin,rectangle,draw=symbols,fill=symbols!10!pagebackground,inner sep=0pt,minimum width=2.5mm,minimum height=1.2mm},
        xi/.style={very thin,circle,draw=symbols,fill=symbols!10!pagebackground,inner sep=0pt,minimum size=1.2mm},
	xies/.style={very thin,rectangle,fill=green!50!black!25,draw=symbols,inner sep=0pt,minimum size=1.1mm},
	xiesf/.style={very thin,rectangle,fill=green!50!black,draw=symbols,inner sep=0pt,minimum size=1.1mm},
        xix/.style={very thin,crosscircle,fill=symbols!10!pagebackground,draw=symbols,inner sep=0pt,minimum size=1.2mm},
        X/.style={very thin,cross,rectangle,fill=pagebackground,draw=symbols,inner sep=0pt,minimum size=1.2mm},
	xib/.style={thin,circle,fill=symbols!10!pagebackground,draw=symbols,inner sep=0pt,minimum size=1.6mm},
	xie/.style={thin,circle,fill=green!50!black,draw=symbols,inner sep=0pt,minimum size=1.6mm},
	xid/.style={thin,circle,fill=symbols,draw=symbols,inner sep=0pt,minimum size=1.6mm},
	xibx/.style={thin,crosscircle,fill=symbols!10!pagebackground,draw=symbols,inner sep=0pt,minimum size=1.6mm},
	kernels2/.style={very thick,draw=connection,segment length=12pt},
	keps/.style={thin,draw=symbols,->},
	kepspr/.style={thick,draw=connection,->},
	krho/.style={thin,draw=symbols,superdense,->},
	krhopr/.style={thick,draw=connection,superdense,->},
	triangle/.style = { regular polygon, regular polygon sides=3},
	not/.style={thin,circle,draw=connection,fill=connection,inner sep=0pt,minimum size=0.5mm},
	diff/.style = {very thin,draw=symbols,triangle,fill=red!50!black,inner sep=0pt,minimum size=1.6mm},
	diff1/.style = {very thin,dectriangle={1}{0},fill=red!50!black,draw=symbols,inner sep=0pt,minimum size=1.6mm},
	diff2/.style = {very thin,dectriangle={1}{1},fill=red!50!black,draw=symbols,inner sep=0pt,minimum size=1.6mm},
		diffmini/.style = {very thin,rectangle,fill=black,draw=black,inner sep=0pt,minimum size=0.75mm},
	 kernelsmod/.style={very thick,draw=connection,segment length=12pt},
	 rec/.style = {very thin,rectangle,fill=black,draw=black,inner sep=0pt,minimum size=2mm},
	cerc/.style={very thin,circle,draw=black,fill=symbols,inner sep=0pt,minimum size=2mm},
	stars/.style={very thin,star,star points=6,star point ratio=0.5, draw=black,fill=red,inner sep=0pt,minimum size=0.7mm},
	>=stealth,
        }

\makeatletter
\def\DeclareSymbol#1#2#3{%
	\expandafter\gdef\csname MH@symb@#1\endcsname{\tikzsetnextfilename{symbol#1}%
	\tikz[baseline=#2,scale=0.15,draw=symbols,line join=round]{#3}}%
	\expandafter\gdef\csname MH@symb@#1s\endcsname{\scalebox{0.75}{\tikzsetnextfilename{symbol#1}%
	\tikz[baseline=#2,scale=0.15,draw=symbols,line join=round]{#3}}}%
	\expandafter\gdef\csname MH@symb@#1ss\endcsname{\scalebox{0.65}{\tikzsetnextfilename{symbol#1}%
	\tikz[baseline=#2,scale=0.15,draw=symbols,line join=round]{#3}}}%
	}
\def\<#1>{\ifthenelse{\boolean{mmode}}{\mathchoice{\csname MH@symb@#1\endcsname}{\csname MH@symb@#1\endcsname}{\csname MH@symb@#1s\endcsname}{\csname MH@symb@#1ss\endcsname}}{\csname MH@symb@#1\endcsname}}
\makeatother

\DeclareSymbol{Xi22}{0.5}{\draw (0,0) node[xi] {} -- (-1,1) node[not] {} -- (0,2) node[xi] {};} % 1 not used in the text
\DeclareSymbol{Xi2}{-2}{\draw (-1,-0.25) node[xi] {} -- (0,1) node[xi] {};} % 2
\DeclareSymbol{Xi2C}{-2}{\draw (-1,-0.25) node[xi] {} -- (0,1) node[xi] {}; \node at (-1,1) (a) {\scriptsize 1};}
\DeclareSymbol{Xi2alpha}{-2}{\draw (-1,-0.25) node[xi] {} -- (0,1) node[xi] {}; \node at (-1.2,1) (a) {\scriptsize $\alpha$};}
\DeclareSymbol{I(Xi)}{0}{\node[xi] at (0,1) (a) {}; \draw (a) -- (0,0);}
\DeclareSymbol{pXi2}{-2}{\draw (-1,-0.25) node[xi] {} -- (0,1) node[xi] {}; \node at (-0.95,0.95) {\tiny $1$};}
\DeclareSymbol{puXi2}{-2}{\draw (-1,-0.25) node[xi] {} -- (0,1) node[xi] {}; \node at (-1.25,0.95) {\tiny $m$};}
\DeclareSymbol{PlantedNoise}{0}{\draw[black] (0,1.5) node[xi] {} -- (0,-0.25) {};}
\DeclareSymbol{PlantedNoiseC}{0}{\draw[black] (0,1.5) node[xi] {} -- (0,-0.25) {}; \node at (-0.75,0.5) (a) {\scriptsize 1};}
\DeclareSymbol{PlantedNoiseCC}{0}{\draw[black] (0,1.5) node[xi] {} -- (0,-0.25) {}; \node at (-0.75,0.5) (a) {\scriptsize 2};}
\DeclareSymbol{PlantedNoiseCX}{0}{\draw[kernels2] (0,1.5) node[xi] {} -- (0,-0.25) {}; \node at (-0.75,0.5) (a) {\scriptsize 1};}
\DeclareSymbol{PlantedNoiseX}{0}{\draw[kernels2] (0,1.5) node[xi] {} -- (0,-0.25) {};}
\DeclareSymbol{PlantedNoisea}{0}{\draw (0,1.5) node[xi] {} -- (0,-0.25) {}; \node at (-0.75,0.5) (a) {\scriptsize $\alpha$};}
\DeclareSymbol{XiIt}{2}{\node[xi] at (-1.5,-0.1) (a) {};
\node[var] at (0,2.25) (b) {\tiny{$\tau$ }};
\node[blank] at (-1.5,1.25) {\tiny{$k$}};
\draw[symbols]  (a) -- (b);}
\DeclareSymbol{Xi1It}{2}{\node[xi] at (-1.5,-0.1) (a) {};
\node[var] at (0,2.25) (b) {\tiny{$\tau$ }};
\node[blank] at (-1.5,1.25) {\tiny{$1$}};
\draw[symbols]  (a) -- (b);}
\DeclareSymbol{Xi0It}{2}{\node[xi] at (-1.5,-0.1) (a) {};
\node[var] at (0,2.25) (b) {\tiny{$\tau$ }};
\node[blank] at (-1.5,1.25) {};
\draw[symbols]  (a) -- (b);}
\DeclareSymbol{It}{2}{\node at (0,-1) (a) {};
\node[var] at (0,2.25) (b) {\tiny{$\tau$ }};
\node[blank] at (-0.75,0.5) {\tiny{$k$}};
\draw[symbols]  (a) -- (b);}
\DeclareSymbol{It0}{2}{\node at (0,-1) (a) {};
\node[var] at (0,2.25) (b) {\tiny{$\tau$ }};
\draw[symbols]  (a) -- (b);}
\DeclareSymbol{1It}{2}{\node at (0,-1) (a) {};
\node[var] at (0,2.25) (b) {\tiny{$\tau$ }};
\node[blank] at (-0.75,0.5) {\tiny{$1$}};
\draw[symbols]  (a) -- (b);}
\DeclareSymbol{1I1t1}{2}{\node at (0,-1) (a) {};
\node[var] at (0,2.25) (b) {\tiny{$\tau_1$ }};
\node[blank] at (-0.75,0.5) {\tiny{$1$}};
\draw[kernels2]  (a) -- (b);}
\DeclareSymbol{1I1t2}{2}{\node at (0,-1) (a) {};
\node[var] at (0,2.25) (b) {\tiny{$\tau_2$ }};
\node[blank] at (-0.75,0.5) {\tiny{$1$}};
\draw[kernels2]  (a) -- (b);}
\DeclareSymbol{1I1t}{2}{\node at (0,-1) (a) {};
\node[var] at (0,2.25) (b) {\tiny{$\tau$ }};
\node[blank] at (-0.75,0.5) {\tiny{$1$}};
\draw[kernels2]  (a) -- (b);}
\DeclareSymbol{I1t}{2}{\node at (0,-1) (a) {};
\node[var] at (0,2.25) (b) {\tiny{$\tau$ }};
\node[blank] at (-0.75,0.5) {\tiny{$1$}};
\draw[kernels]  (a) -- (b);}
\DeclareSymbol{I1t1}{2}{\node at (0,-1) (a) {};
\node[var] at (0,2.25) (b) {\tiny{$\tau_1$}};
\node[blank] at (-0.75,0.5) {};
\draw[kernels2]  (a) -- (b);}
\DeclareSymbol{Ita}{2}{\node at (0,-1) (a) {};
\node[var] at (0,2.25) (b) {\tiny{$\tau$ }};
\node[blank] at (-0.75,0.5) {\tiny{$\alpha$}};
\draw[symbols]  (a) -- (b);}
\DeclareSymbol{I1t1I1t2}{0}{\node[var] at (-2,2) (a) {\tiny{$\tau_1$}}; 
\node[var] at (2,2) (b) {\tiny{$\tau_2$}}; 
\draw[kernels2] (0,0) -- (a); 
\draw[kernels2] (0,0) -- (b); 
\node at (-1.5,-0.05) {{\tiny $\ell$}}; 
\node at (1.5,-0.25) {{\tiny $m$}};}
\DeclareSymbol{0I1t10I1t2}{0}{\node[var] at (-2,2) (a) {\tiny{$\tau_1$}}; 
\node[var] at (2,2) (b) {\tiny{$\tau_2$}}; 
\draw[kernels2] (0,0) -- (a); 
\draw[kernels2] (0,0) -- (b);}
\DeclareSymbol{0I1t11I1t2}{0}{\node[var] at (-2,2) (a) {\tiny{$\tau_1$}}; 
\node[var] at (2,2) (b) {\tiny{$\tau_2$}}; 
\draw[kernels2] (0,0) -- (a); 
\draw[kernels2] (0,0) -- (b); 
\node at (-1.5,-0.05) {{\tiny $0$}}; 
\node at (1.5,-0.05) {{\tiny $1$}};}
\DeclareSymbol{I1XiItI1Xi}{0}{\node[xi] at (-2,2) (a) {}; 
\node[xi] at (2,2) (b) {}; 
\node[var] at (0,3) (c) {\tiny{$\tau$}};
\draw[kernels2] (0,0) -- (a); 
\draw[kernels2] (0,0) -- (b);
\draw[] (0,0) -- (c);
\node at (-1.5,-0.05) {{\tiny $\ell$}}; 
\node at (1.5,-0.25) {{\tiny $m$}};}
\DeclareSymbol{I1t1I1t2It3}{0}{\node[var] at (-3,2) (a) {\tiny{$\tau_1$}}; 
\node[var] at (3,2) (b) {\tiny{$\tau_3$}}; 
\node[var] at (0,4) (c) {\tiny{$\tau_2$}};
\draw[kernels2] (0,0) -- (a); 
\draw (0,0) -- (b);
\draw[kernels2] (0,0) -- (c);
\node at (-1.5,-0.05) {{\tiny $\ell$}};
\node at (-1,2) {\tiny{$m$}}; 
\node at (1.5,-0.25) {{\tiny $n$}};}
\DeclareSymbol{XiIt1It2It3}{0}{\node[var] at (-3,2) (a) {\tiny{$\tau_1$}}; 
\node[var] at (3,2) (b) {\tiny{$\tau_3$}}; 
\node[var] at (0,4) (c) {\tiny{$\tau_2$}};
\node[xi] at (0,0) (d) {};
\draw (d) -- (a); 
\draw (d) -- (b);
\draw (d) -- (c);
\node at (-1.5,-0.05) {{\tiny $\ell$}};
\node at (-1,2) {\tiny{$m$}}; 
\node at (1.5,-0.25) {{\tiny $n$}};}
\DeclareSymbol{It1It2It3}{0}{\node[var] at (-3,2) (a) {\tiny{$\tau_1$}}; 
\node[var] at (3,2) (b) {\tiny{$\tau_3$}}; 
\node[var] at (0,4) (c) {\tiny{$\tau_2$}};
\draw (0,0) -- (a); 
\draw (0,0) -- (b);
\draw (0,0) -- (c);
\node at (-1.5,-0.05) {{\tiny $\ell$}};
\node at (-1,2) {\tiny{$m$}}; 
\node at (1.5,-0.25) {{\tiny $n$}};}
\DeclareSymbol{It1It2I1t3}{0}{\node[var] at (-3,2) (a) {\tiny{$\tau_1$}}; 
\node[var] at (3,2) (b) {\tiny{$\tau_3$}}; 
\node[var] at (0,4) (c) {\tiny{$\tau_2$}};
\draw (0,0) -- (a); 
\draw[kernels2] (0,0) -- (b);
\draw (0,0) -- (c);
\node at (-1.5,-0.05) {{\tiny $\ell$}};
\node at (-1,2) {\tiny{$m$}}; 
\node at (1.5,-0.25) {{\tiny $n$}};}
\DeclareSymbol{It1I1t2It3}{0}{\node[var] at (-3,2) (a) {\tiny{$\tau_1$}}; 
\node[var] at (3,2) (b) {\tiny{$\tau_3$}}; 
\node[var] at (0,4) (c) {\tiny{$\tau_2$}};
\draw (0,0) -- (a); 
\draw (0,0) -- (b);
\draw[kernels2] (0,0) -- (c);
\node at (-1.5,-0.05) {{\tiny $\ell$}};
\node at (-1,2) {\tiny{$m$}}; 
\node at (1.5,-0.25) {{\tiny $n$}};}
\DeclareSymbol{It1It2It3It4}{0}{\node[var] at (-4,2) (a) {\tiny{$\tau_1$}}; 
\node[var] at (4,2) (b) {\tiny{$\tau_3$}}; 
\node[var] at (-1.5,4) (c) {\tiny{$\tau_2$}};
\node[var] at (1.5,4) (d) {\tiny{$\tau_2$}};
\draw (0,0) -- (a); 
\draw (0,0) -- (b);
\draw (0,0) -- (c);
\draw (0,0) -- (d);
\node at (-1.5,-0.05) {{\tiny $k$}};
\node at (-1.5,1.75) {\tiny{$\ell$}}; 
\node at (1.5,-0.25) {{\tiny $m$}};
\node at (1.5, 1.75) {{\tiny $n$}};}
\DeclareSymbol{It1I1t2It3It4}{0}{\node[var] at (-4,2) (a) {\tiny{$\tau_1$}}; 
\node[var] at (4,2) (b) {\tiny{$\tau_3$}}; 
\node[var] at (-1.5,4) (c) {\tiny{$\tau_2$}};
\node[var] at (1.5,4) (d) {\tiny{$\tau_2$}};
\draw (0,0) -- (a); 
\draw (0,0) -- (b);
\draw[kernels2] (0,0) -- (c);
\draw (0,0) -- (d);
\node at (-1.5,-0.05) {{\tiny $k$}};
\node at (-1.5,1.75) {\tiny{$\ell$}}; 
\node at (1.5,-0.25) {{\tiny $m$}};
\node at (1.5, 1.75) {{\tiny $n$}};}
\DeclareSymbol{It1I1t2I1t3}{0}{\node[var] at (-3,2) (a) {\tiny{$\tau_1$}}; 
\node[var] at (3,2) (b) {\tiny{$\tau_3$}}; 
\node[var] at (0,4) (c) {\tiny{$\tau_2$}};
\draw (0,0) -- (a); 
\draw[kernels2] (0,0) -- (b);
\draw[kernels2] (0,0) -- (c);
\node at (-1.5,-0.05) {{\tiny $\ell$}};
\node at (-1,2) {\tiny{$m$}}; 
\node at (1.5,-0.25) {{\tiny $n$}};}
\DeclareSymbol{It1It2I1t3I1t4}{0}{\node[var] at (-4,2) (a) {\tiny{$\tau_1$}}; 
\node[var] at (4,2) (b) {\tiny{$\tau_4$}}; 
\node[var] at (2,4) (c) {\tiny{$\tau_3$}};
\node[var] at (-2,4) (d) {\tiny{$\tau_2$}};
\draw (0,0) -- (a); 
\draw[kernels2] (0,0) -- (b);
\draw[kernels2] (0,0) -- (c);
\draw (0,0) -- (d);
\node at (-1.75,-0.1) {{\tiny $k$}};
\node at (-2.15,2) {\tiny{$\ell$}};
\node at (2.15,2) {\tiny{$m$}}; 
\node at (1.75,-0.3) {{\tiny $n$}};}
\DeclareSymbol{XiItIXi}{0}{\node[var] at (-2,2) (a) {\tiny{$\tau$}}; 
\node[xi] at (2,2) (b) {}; 
\draw (0,0) -- (a); 
\draw (0,0) -- (b); 
\node at (-1.5,-0.05) {{\tiny $\ell$}}; 
\node at (1.5,-0.25) {{\tiny $m$}};
\node[xi] at (0,0) (c) {};}
\DeclareSymbol{It1It2}{0}{\node[var] at (-2,2) (a) {\tiny{$\tau_1$}}; 
\node[var] at (2,2) (b) {\tiny{$\tau_2$}}; 
\draw (0,0) -- (a); 
\draw (0,0) -- (b); 
\node at (-1.5,-0.05) {{\tiny $\ell$}}; 
\node at (1.5,-0.25) {{\tiny $m$}};}
\DeclareSymbol{It1I1t2}{0}{\node[var] at (-2,2) (a) {\tiny{$\tau_1$}}; 
\node[var] at (2,2) (b) {\tiny{$\tau_2$}}; 
\draw (0,0) -- (a); 
\draw[kernels2] (0,0) -- (b); 
\node at (-1.5,-0.05) {{\tiny $\ell$}}; 
\node at (1.5,-0.25) {{\tiny $m$}};}
\DeclareSymbol{Xi2b}{-2}{\draw (-1,-0.25) node[xic] {} -- (0,1) node[xic] {};} % 2
\DeclareSymbol{Xi2g}{-2}{\draw (-1,-0.25) node[xies] {} -- (0,1) node[xi] {};} % 2
\DeclareSymbol{Xi2g2}{-2}{\draw (-1,-0.25) node[xi] {} -- (0,1) node[xies] {};} % 2
\DeclareSymbol{cXi2}{-2}{\draw (0,-0.25) node[xi] {} -- (-1,1) node[xic] {};}%3
\DeclareSymbol{Xi3}{0}{\draw (0,0) node[xi] {} -- (-1,1) node[xi] {} -- (0,2) node[xi] {};}%4
\DeclareSymbol{XiIIXi}{0}{\draw (0,0) node[xi] {} -- (-1,1); \draw[kernels2] (-1,1) node[not] {} -- (0,2) node[xi] {};}%4

\DeclareSymbol{Xi4}{2}{\draw (0,0) node[xi] {} -- (-1,1) node[xi] {} -- (0,2) node[xi] {} -- (-1,3) node[xi] {};}%5
\DeclareSymbol{Xi4_1}{2}{\draw (0,0) node[xic] {} -- (-1,1) node[xic] {} -- (0,2) node[xi] {} -- (-1,3) node[xi] {};}%6
\DeclareSymbol{Xi4_2}{2}{\draw (0,0) node[xic] {} -- (-1,1) node[xi] {} -- (0,2) node[xi] {} -- (-1,3) node[xic] {};}%7
\DeclareSymbol{Xi2X}{-2}{\draw (0,-0.25) node[xi] {} -- (-1,1) node[xix] {};}%8
\DeclareSymbol{XXi2}{-2}{\draw (0,-0.25) node[xix] {} -- (-1,1) node[xi] {};}%9
\DeclareSymbol{IIXi}{0}{\draw (0,-0.25) node[not] {} -- (-1,1) node[xi] {} -- (0,2) node[xi] {};}%10
\DeclareSymbol{IXi^2}{-1}{\draw (-1,1) node[xi] {} -- (0,0) node[not] {} -- (1,1) node[xi] {};}%11
\DeclareSymbol{IIXi^2}{-4}{\draw (0,-1.5) node[not] {} -- (0,0);
\draw[kernels2] (-1,1) node[xi] {} -- (0,0) node[not] {} -- (1,1) node[xi] {};}%12
\DeclareSymbol{XiX}{-2.8}{\node[xibx] {};}%13
\DeclareSymbol{tauX}{-2.8}{ \node[X] {};}%14
\DeclareSymbol{Xi}{-2.8}{\node[xib] {};}%15

\DeclareSymbol{IXiX}{-1}{\draw (0,-0.25) node[not] {} -- (-1,1) node[xix] {};}%18
%\DeclareSymbol{Xi3b}{-1}{\draw (-1,1) node[xi] {} -- (0,0) node[xi] {} -- (1,1) node[xi] {};}%19
\DeclareSymbol{IXi3}{2}{\draw (0,-0.25) node[not] {} -- (-1,1) node[xi] {} -- (0,2) node[xi] {} -- (-1,3) node[xi] {};}%20
\DeclareSymbol{IXi}{-2}{\draw (0,-0.25) node[not] {} -- (-1,1) node[xi] {};}%21
\DeclareSymbol{XiI}{-2}{\draw (0,-0.25) node[xi] {} -- (-1,1) node[not] {};}%22

\DeclareSymbol{Xi4b}{0}{\draw(0,1.5) node[xi] {} -- (0,0); \draw (-1,1) node[xi] {} -- (0,0) node[xi] {} -- (1,1) node[xi] {};}%23
\DeclareSymbol{Xi4b'}{0}{\draw(0,1.5) node[xi] {} -- (0,-0.2); \draw (-1,1) node[xi] {} -- (0,-0.2) node[not] {} -- (1,1) node[xi] {};}%24 not used
\DeclareSymbol{Xi4c}{0}{\draw (0,1) -- (0.8,2.2) node[xi] {};\draw (0,-0.25) node[xi] {} -- (0,1) node[xi] {} -- (-0.8,2.2) node[xi] {};}%25
\DeclareSymbol{Xi4d}{-4.5}{\draw (0,-1.5) node[not] {} -- (0,0); \draw (-1,1) node[xi] {} -- (0,0) node[xi] {} -- (1,1) node[xi] {};}%26 not used
\DeclareSymbol{Xi4e}{0}{\draw (0,2) node[xi] {} -- (-1,1) node[xi] {} -- (0,0) node[xi] {} -- (1,1) node[xi] {};}%27
\DeclareSymbol{Xi4e'}{0}{\draw (0,2) node[xi] {} -- (-1,1) node[xi] {} -- (0,-0.2) node[not] {} -- (1,1) node[xi] {};}%28 not used

\DeclareSymbol{Xitwo}%29
{0}{\draw[kernels2] (0,0) node[not] {} -- (-1,1) node[not] {}
-- (-2,2) node[not]{} -- (-3,3) node[xi]  {};
\draw[kernels2] (0,0) -- (1,1) node[xi] {};
\draw[kernels2] (-1,1) -- (0,2) node[xi] {};
\draw[kernels2] (-2,2) -- (-1,3) node[xi] {};}

\DeclareSymbol{IXitwo}%30
{0}{\draw (-.7,1.2) node[xi] {} -- (0,-0.2) -- (.7,1.2) node[xi] {};}
\DeclareSymbol{I1Xitwo}%30
{0}{\draw[kernels2] (0,0) node[not] {} -- (-1,1) node[xi] {};
\draw[kernels2] (0,0) -- (1,1) node[xi] {};}
\DeclareSymbol{I1Xitwou}%30
{0}{\draw[kernels2] (0,0) node[not] {} -- (-1,1) node[xi] {};
\draw[kernels2] (0,0) -- (1,1) node[xi] {}; \node at (-0.85,-0.2) {{\tiny $1$}}; \node at (0.9,-0.2) {{\tiny $0$}};}

\DeclareSymbol{I1Xitwoub}%30
{0}{\draw[kernels2] (0,0) node[not] {} -- (-1,1) node[xi] {};
	\draw[kernels2] (0,0) -- (1,1) node[xi] {}; \node at (-0.85,-0.2) {{\tiny $0$}}; \node at (0.9,-0.2) {{\tiny $1$}};}

\DeclareSymbol{I1Xitwoab}%30
{0}{\draw[kernels2] (0,0) node[not] {} -- (-1,1) node[xi] {};
\draw[kernels2] (0,0) -- (1,1) node[xi] {}; \node at (-0.85,-0.1) {{\tiny $\alpha$}}; \node at (0.9,-0.2) {{\tiny $\beta$}};}
\DeclareSymbol{I1Xitwoup}%30
{0}{\draw[kernels2] (0,0) node[not] {} -- (-1,1) node[xi] {};
\draw[kernels2] (0,0) -- (1,1) node[xi] {}; \node at (-0.85,0) {{\tiny $k$}}; \node at (0.9,-0.1) {{\tiny $l$}};}
\DeclareSymbol{I1Xitwobis}%30
{0}{\draw[kernels2] (0,0) node[not] {} -- (-1,1) node[xies] {};
\draw[kernels2] (0,0) -- (1,1) node[xies] {};}

\DeclareSymbol{I1Xitwog}%30
{0}{\draw[kernels2] (0,0) node[not] {} -- (-1,1) node[xies] {};
\draw[kernels2] (0,0) -- (1,1) node[xi] {};}

\DeclareSymbol{cI1Xitwo}%31
{0}{\draw[kernels2] (0,0) node[not] {} -- (-1,1) node[xic] {};
\draw[kernels2] (0,0) -- (1,1) node[xi] {};}

\DeclareSymbol{I1IXi3}{0}{\draw (0,0) node[xi] {} -- (-1,1) ; %32
\draw[kernels2] (-1,1) node[not] {} -- (0,2) node[xi] {};
\draw[kernels2] (-1,1) node[not] {} -- (-2,2) node[xi] {};}

\DeclareSymbol{I1Xi3c}{-1}{\draw[kernels2](0,1.5) node[xi] {} -- (0,0) node[not] {}; \draw (-1,1) node[xi] {} -- (0,0) ; \draw[kernels2] (0,0) -- (1,1) node[xi] {};}%33

\DeclareSymbol{I1Xi3cbis}{-1}{\draw[kernels2](0,1.5) node[xies] {} -- (0,0) node[not] {}; \draw (-1,1) node[xies] {} -- (0,0) ; \draw[kernels2] (0,0) -- (1,1) node[xies] {};}%33

\DeclareSymbol{I1IXi3b}{0}{\draw[kernels2] (0,0) node[not] {} -- (-1,1) ; \draw[kernels2] (0,0)   -- (1,1) node[xi] {} ;
\draw (-1,1) node[xi] {} -- (0,2) node[xi] {};
}%34

\DeclareSymbol{I1IXi3c}{0}{\draw[kernels2] (0,0) node[not] {} -- (-1,1) ; \draw[kernels2] (0,0)   -- (1,1) node[xi] {} ;
\draw[kernels2] (-1,1) node[not] {} -- (0,2) node[xi] {};
\draw[kernels2] (-1,1) node[not] {} -- (-2,2) node[xi] {};}%35

\DeclareSymbol{I1IXi3cbis}{0}{\draw[kernels2] (0,0) node[not] {} -- (-1,1) ; \draw[kernels2] (0,0)   -- (1,1) node[xies] {} ;
\draw[kernels2] (-1,1) node[not] {} -- (0,2) node[xies] {};
\draw[kernels2] (-1,1) node[not] {} -- (-2,2) node[xies] {};}%35

\DeclareSymbol{I1Xi}{0}{\draw[kernels2] (0,0) node[not] {} -- (-1,1)  node[xi] {} ;}%36

\DeclareSymbol{I1Xi4a}{2}{\draw[kernels2] (0,0) node[not] {} -- (-1,1) ; \draw[kernels2] (0,0) node[not] {} -- (1,1) node[xi] {} ;%37
\draw (-1,1) node[xi] {} -- (0,2) node[xi] {} -- (-1,3) node[xi] {};}
%\DeclareSymbol{Xi4}{2}{\draw (0,0) node[xi] {} -- (-1,1) node[xi] {} -- (0,2) node[xi] {} -- (-1,3) node[xi] {};}%38

\DeclareSymbol{cI1Xi4a}{2}{\draw[kernels2] (0,0) node[not] {} -- (-1,1) ; \draw[kernels2] (0,0) node[not] {} -- (1,1) node[xic] {} ;%39
\draw (-1,1) node[xic] {} -- (0,2) node[xi] {} -- (-1,3) node[xi] {};}
%\DeclareSymbol{Xi4}{2}{\draw (0,0) node[xi] {} -- (-1,1) node[xi] {} -- (0,2) node[xi] {} -- (-1,3) node[xi] {};}%40

\DeclareSymbol{I1Xi4b}{2}{\draw (0,0) node[xi] {} -- (-1,1) node[xi] {} -- (0,2) ; \draw[kernels2] (0,2) node[not] {} -- (-1,3) node[xi] {};\draw[kernels2] (0,2)  -- (1,3) node[xi] {};
}%41

\DeclareSymbol{cI1Xi4b}{2}{\draw (0,0) node[xic] {} -- (-1,1) node[xic] {} -- (0,2) ; \draw[kernels2] (0,2) node[not] {} -- (-1,3) node[xi] {};\draw[kernels2] (0,2)  -- (1,3) node[xi] {};
}%42

\DeclareSymbol{I1Xi4c}{2}{\draw (0,0) node[xi] {} -- (-1,1) node[not] {}; \draw[kernels2] (-1,1) -- (0,2) ; 
\draw[kernels2] (-1,1) -- (-2,2) node[xi] {} ;
\draw (0,2) node[xi] {} -- (-1,3) node[xi] {};}%43

\DeclareSymbol{cI1Xi4c}{2}{\draw (0,0) node[xic] {} -- (-1,1) node[not] {}; \draw[kernels2] (-1,1) -- (0,2) ; 
\draw[kernels2] (-1,1) -- (-2,2) node[xic] {} ;
\draw (0,2) node[xi] {} -- (-1,3) node[xi] {};}%44

\DeclareSymbol{I1Xi4ab}{2}{\draw[kernels2] (0,0) node[not] {} -- (-1,1) ; \draw[kernels2] (0,0) node[not] {} -- (1,1) node[xi] {};\draw (-1,1) node[xi] {} -- (0,2) ; \draw[kernels2] (0,2) node[not] {} -- (-1,3) node[xi] {};\draw[kernels2] (0,2)  -- (1,3) node[xi] {}; }%45

\DeclareSymbol{cI1Xi4ab}{2}{\draw[kernels2] (0,0) node[not] {} -- (-1,1) ; \draw[kernels2] (0,0) node[not] {} -- (1,1) node[xic] {};\draw (-1,1) node[xic] {} -- (0,2) ; \draw[kernels2] (0,2) node[not] {} -- (-1,3) node[xi] {};\draw[kernels2] (0,2)  -- (1,3) node[xi] {}; }%46

\DeclareSymbol{I1Xi4bc}{2}{\draw (0,0) node[xi] {} -- (-1,1) node[not] {}; \draw[kernels2] (-1,1) -- (0,2) ; %47
\draw[kernels2] (-1,1) -- (-2,2) node[xi] {} ; \draw[kernels2] (0,2) node[not] {} -- (-1,3) node[xi] {};\draw[kernels2] (0,2)  -- (1,3) node[xi] {};
}

\DeclareSymbol{cI1Xi4bc}{2}{\draw (0,0) node[xic] {} -- (-1,1) node[not] {}; \draw[kernels2] (-1,1) -- (0,2) ; %48
\draw[kernels2] (-1,1) -- (-2,2) node[xic] {} ; \draw[kernels2] (0,2) node[not] {} -- (-1,3) node[xi] {};\draw[kernels2] (0,2)  -- (1,3) node[xi] {};
}

\DeclareSymbol{I1Xi4abcc1}{2}{\draw[kernels2] (0,0) node[not] {} -- (-1,1) node[not] {}%50
-- (-2,2) node[not]{} -- (-3,3) node[xic]  {};
\draw[kernels2] (0,0) -- (1,1) node[xic] {};
\draw[kernels2] (-1,1) -- (0,2) node[xi] {};
\draw[kernels2] (-2,2) -- (-1,3) node[xi] {};
}

\DeclareSymbol{I1Xi4abcc1b}{2}{\draw[kernels2] (0,0) node[not] {} -- (-1,1) node[not] {}%50
-- (-2,2) node[not]{} -- (-3,3) node[xi]  {};
\draw[kernels2] (0,0) -- (1,1) node[xic] {};
\draw[kernels2] (-1,1) -- (0,2) node[xic] {};
\draw[kernels2] (-2,2) -- (-1,3) node[xi] {};
}

\DeclareSymbol{I1Xi4abcc2}{2}{\draw[kernels2] (0,0) node[not] {} -- (-1,1) node[not] {}%51
-- (-2,2) node[not]{} -- (-3,3) node[xic]  {};
\draw[kernels2] (0,0) -- (1,1) node[xi] {};
\draw[kernels2] (-1,1) -- (0,2) node[xi] {};
\draw[kernels2] (-2,2) -- (-1,3) node[xic] {};
}

\DeclareSymbol{I1Xi4ac}{2}{\draw[kernels2] (0,0) node[not] {} -- (-1,1) ; \draw[kernels2] (0,0) node[not] {} -- (1,1) node[xi] {}; 
\draw[kernels2] (-1,1) node[not] {} -- (0,2) ; %52
\draw[kernels2] (-1,1) -- (-2,2) node[xi] {} ;
\draw (0,2) node[xi] {} -- (-1,3) node[xi] {};}

\DeclareSymbol{cI1Xi4ac}{2}{\draw[kernels2] (0,0) node[not] {} -- (-1,1) ; \draw[kernels2] (0,0) node[not] {} -- (1,1) node[xic] {}; 
\draw[kernels2] (-1,1) node[not] {} -- (0,2) ; %53
\draw[kernels2] (-1,1) -- (-2,2) node[xic] {} ;
\draw (0,2) node[xi] {} -- (-1,3) node[xi] {};}

\DeclareSymbol{I1Xi4acc1}{2}{\draw[kernels2] (0,0) node[not] {} -- (-1,1) ; \draw[kernels2] (0,0) node[not] {} -- (1,1) node[xic] {}; 
\draw[kernels2] (-1,1) node[not] {} -- (0,2) ; %54
\draw[kernels2] (-1,1) -- (-2,2) node[xi] {} ;
\draw (0,2) node[xic] {} -- (-1,3) node[xi] {};}

\DeclareSymbol{I1Xi4acc2}{2}{\draw[kernels2] (0,0) node[not] {} -- (-1,1) ; \draw[kernels2] (0,0) node[not] {} -- (1,1) node[xic] {}; 
\draw[kernels2] (-1,1) node[not] {} -- (0,2) ; %55
\draw[kernels2] (-1,1) -- (-2,2) node[xi] {} ;
\draw (0,2) node[xi] {} -- (-1,3) node[xic] {};}

\DeclareSymbol{2I1Xi4}{2}{\draw[kernels2] (0,0) node[not] {} -- (-1,1) node[not] {};
\draw[kernels2] (0,0) -- (1,1) node[not] {};%56
\draw[kernels2] (-1,1) -- (-1.5,2.5) node[xi] {};
\draw[kernels2] (-1,1) -- (-0.5,2.5) node[xi] {};
\draw[kernels2] (1,1) -- (0.5,2.5) node[xi] {};
\draw[kernels2] (1,1) -- (1.5,2.5) node[xi] {};
}

\DeclareSymbol{2I1Xi4dis}{2}{\draw[kernels2] (0,0) node[not] {} -- (-1,1) node[not] {};
\draw[kernels2] (0,0) -- (1,1) node[not] {};%56
\draw[kernels2] (-1,1) -- (-1.5,2.5) node[xies] {};
\draw[kernels2] (-1,1) -- (-0.5,2.5) node[xies] {};
\draw[kernels2] (1,1) -- (0.5,2.5) node[xies] {};
\draw[kernels2] (1,1) -- (1.5,2.5) node[xies] {};
}

\DeclareSymbol{2I1Xi4c1}{2}{\draw[kernels2] (0,0) node[not] {} -- (-1,1) node[not] {};%57
\draw[kernels2] (0,0) -- (1,1) node[not] {};
\draw[kernels2] (-1,1) -- (-1.5,2.5) node[xic] {};
\draw[kernels2] (-1,1) -- (-0.5,2.5) node[xi] {};
\draw[kernels2] (1,1) -- (0.5,2.5) node[xic] {};
\draw[kernels2] (1,1) -- (1.5,2.5) node[xi] {};
}

\DeclareSymbol{2I1Xi4c2}{2}{\draw[kernels2] (0,0) node[not] {} -- (-1,1) node[not] {};%58
\draw[kernels2] (0,0) -- (1,1) node[not] {};
\draw[kernels2] (-1,1) -- (-1.5,2.5) node[xic] {};
\draw[kernels2] (-1,1) -- (-0.5,2.5) node[xic] {};
\draw[kernels2] (1,1) -- (0.5,2.5) node[xi] {};
\draw[kernels2] (1,1) -- (1.5,2.5) node[xi] {};
}

\DeclareSymbol{2I1Xi4b}{2}{\draw[kernels2] (0,0) node[not] {} -- (-1,1) ;%59
\draw[kernels2] (0,0) -- (1,1);
\draw (-1,1) node[xi] {} -- (-1,2.5) node[xi] {};
\draw (1,1)  node[xi] {} -- (1,2.5) node[xi] {};
}

\DeclareSymbol{2I1Xi4bb}{2}{\draw[kernels2] (0,0) node[not] {} -- (-1,1) ;%59
\draw[kernels2] (0,0) -- (1,1);
\draw (-1,1) node[xi] {} -- (-1,2.5) node[xiesf] {};
\draw (1,1)  node[xi] {} -- (1,2.5) node[xic] {};
}

\DeclareSymbol{2I1Xi4c}{2}{\draw[kernels2] (0,0) node[not] {} -- (-1,1);%60
\draw[kernels2] (0,0) -- (1,1) node[not] {};
\draw (-1,1)  node[xi] {} -- (-1,2.5) node[xi] {};
\draw[kernels2] (1,1) -- (0.4,2.5) node[xi] {};
\draw[kernels2] (1,1) -- (1.6,2.5) node[xi] {};
}

\DeclareSymbol{2I1Xi4cc1}{2}{\draw[kernels2] (0,0) node[not] {} -- (-1,1);%61
\draw[kernels2] (0,0) -- (1,1) node[not] {};
\draw (-1,1)  node[xic] {} -- (-1,2.5) node[xi] {};
\draw[kernels2] (1,1) -- (0.4,2.5) node[xic] {};
\draw[kernels2] (1,1) -- (1.6,2.5) node[xi] {};
}

\DeclareSymbol{2I1Xi4cc2}{2}{\draw[kernels2] (0,0) node[not] {} -- (-1,1);%62
\draw[kernels2] (0,0) -- (1,1) node[not] {};
\draw (-1,1)  node[xic] {} -- (-1,2.5) node[xic] {};
\draw[kernels2] (1,1) -- (0.4,2.5) node[xi] {};
\draw[kernels2] (1,1) -- (1.6,2.5) node[xi] {};
}

\DeclareSymbol{Xi4ba}{0}{\draw(-0.5,1.5) node[xi] {} -- (0,0); \draw (-1.5,1) node[xi] {} -- (0,0) node[not] {}; \draw[kernels2] (0,0) -- (1.5,1) node[xi] {};
\draw[kernels2] (0,0) -- (0.5,1.5) node[xi] {} ;}%63

\DeclareSymbol{Xi4badis}{0}{\draw(-0.5,1.5) node[xies] {} -- (0,0); \draw (-1.5,1) node[xies] {} -- (0,0) node[not] {}; \draw[kernels2] (0,0) -- (1.5,1) node[xies] {};
\draw[kernels2] (0,0) -- (0.5,1.5) node[xies] {} ;}%63

\DeclareSymbol{Xi4ba1}{0}{\draw(-0.5,1.5) node[xi] {} -- (0,0); \draw (-1.5,1) node[xi] {} -- (0,0) node[not] {}; \draw[kernels2] (0,0) -- (1.5,1) node[xic] {};
\draw[kernels2] (0,0) -- (0.5,1.5) node[xic] {} ;}%64

\DeclareSymbol{Xi4ba1b}{0}{\draw(-0.5,1.5) node[xic] {} -- (0,0); \draw (-1.5,1) node[xic] {} -- (0,0) node[not] {}; \draw[kernels2] (0,0) -- (1.5,1) node[xi] {};
\draw[kernels2] (0,0) -- (0.5,1.5) node[xi] {} ;}%64

\DeclareSymbol{Xi4ba1bdiff}{0}{\draw(-0.5,1.5) node[xic] {} -- (0,0); \draw (-1.5,1) node[xic] {} -- (0,0) node[not] {}; \draw (0,0) -- (1.5,1) node[xi] {};
\draw (0,0) -- (0.5,1.5) node[xi] {};
\draw(0,0) node[diff] {};}%64

\DeclareSymbol{Xi4ba1bb}{0}{\draw(-0.5,1.5) node[xic] {} -- (0,0); \draw (-1.5,1) node[xiesf] {} -- (0,0) node[not] {}; \draw[kernels2] (0,0) -- (1.5,1) node[xi] {};
\draw[kernels2] (0,0) -- (0.5,1.5) node[xi] {} ;}%64

\DeclareSymbol{Xi4ba2}{0}{\draw(-0.5,1.5) node[xi] {} -- (0,0); \draw (-1.5,1) node[xic] {} -- (0,0) node[not] {}; \draw[kernels2] (0,0) -- (1.5,1) node[xi] {};
\draw[kernels2] (0,0) -- (0.5,1.5) node[xic] {} ;}%65

\DeclareSymbol{Xi4ba2b}{0}{\draw(-0.5,1.5) node[xi] {} -- (0,0); \draw (-1.5,1) node[xic] {} -- (0,0) node[not] {}; \draw[kernels2] (0,0) -- (1.5,1) node[xi] {};
\draw[kernels2] (0,0) -- (0.5,1.5) node[xiesf] {} ;}%65

%\DeclareSymbol{Xi4c}{0}{\draw (0,1) -- (0.8,2.2) node[xi] {};\draw (0,-0.25) node[xi] {} -- (0,1) node[xi] {} -- (-0.8,2.2) node[xi] {};}%66

\DeclareSymbol{Xi4ca}{0}{\draw (0,1) -- (-1,2.2) node[xi] {};\draw (0,-0.25) node[xi] {} -- (0,1) ; \draw[kernels2] (0,1) node[not] {} -- (1,2.2) node[xi] {};%67
\draw[kernels2] (0,1) {} -- (0,2.7) node[xi] {};
}

\DeclareSymbol{Xi4cb}{0}{\draw (-1,1) -- (-2,2) node[xi] {};\draw[kernels2] (0,0)  -- (-1,1) node[xi] {} ; \draw[kernels2] (0,0) node[not] {} -- (1,1) node[xi] {} ; 
\draw (-1,1) node[xi] {} -- (0,2) node[xi] {};}

\DeclareSymbol{Xi4cbb}{0}{\draw (-1,1) -- (-2,2) node[xiesf] {};\draw[kernels2] (0,0)  -- (-1,1) node[xi] {} ; \draw[kernels2] (0,0) node[not] {} -- (1,1) node[xi] {} ; 
\draw (-1,1) node[xi] {} -- (0,2) node[xic] {};}

\DeclareSymbol{Xi4cbc1}{0}{\draw (-1,1) -- (-2,2) node[xic] {};\draw[kernels2] (0,0)  -- (-1,1) node[xic] {} ; \draw[kernels2] (0,0) node[not] {} -- (1,1) node[xi] {} ; 
\draw (-1,1) node[xic] {} -- (0,2) node[xi] {};}

\DeclareSymbol{Xi4cbc2}{0}{\draw (-1,1) -- (-2,2) node[xi] {};\draw[kernels2] (0,0)  -- (-1,1) node[xi] {} ; \draw[kernels2] (0,0) node[not] {} -- (1,1) node[xic] {} ; 
\draw (-1,1) node[xic] {} -- (0,2) node[xi] {};}

\DeclareSymbol{Xi4cab}{0}{\draw (-1,1) -- (-2,2) node[xi] {};\draw[kernels2] (0,0)  -- (-1,1); \draw[kernels2] (0,0) node[not] {} -- (1,1) node[xi] {} ; 
\draw[kernels2] (-1,1)  {} -- (0,2) node[xi] {};
\draw[kernels2] (-1,1) node[not] {} -- (-1,2.5) node[xi] {};
}%69

\DeclareSymbol{Xi4cabdis}{0}{\draw (-1,1) -- (-2,2) node[xies] {};\draw[kernels2] (0,0)  -- (-1,1); \draw[kernels2] (0,0) node[not] {} -- (1,1) node[xies] {} ; 
\draw[kernels2] (-1,1)  {} -- (0,2) node[xies] {};
\draw[kernels2] (-1,1) node[not] {} -- (-1,2.5) node[xies] {};
}%69

\DeclareSymbol{Xi4cabc1}{0}{\draw (-1,1) -- (-2,2) node[xi] {};\draw[kernels2] (0,0)  -- (-1,1); \draw[kernels2] (0,0) node[not] {} -- (1,1) node[xic] {} ; 
\draw[kernels2] (-1,1)  {} -- (0,2) node[xic] {};
\draw[kernels2] (-1,1) node[not] {} -- (-1,2.5) node[xi] {};
}%69

\DeclareSymbol{Xi4cabc2}{0}{\draw (-1,1) -- (-2,2) node[xic] {};\draw[kernels2] (0,0)  -- (-1,1); \draw[kernels2] (0,0) node[not] {} -- (1,1) node[xic] {} ; 
\draw[kernels2] (-1,1)  {} -- (0,2) node[xi] {};
\draw[kernels2] (-1,1) node[not] {} -- (-1,2.5) node[xi] {};
}%69

\DeclareSymbol{Xi4ea}{1.5}{\draw (-1,2.5) node[xi] {} -- (-1,1) node[xi] {} -- (0,0); %71
 \draw[kernels2] (0,0)  -- (1,1) node[xi] {};
\draw[kernels2] (0,0) node[not] {} -- (0,1.5) node[xi] {}; }

\DeclareSymbol{Xi4eac1}{1.5}{\draw (-1,2.5) node[xic] {} -- (-1,1) node[xi] {} -- (0,0); %72
 \draw[kernels2] (0,0)  -- (1,1) node[xic] {};
\draw[kernels2] (0,0) node[not] {} -- (0,1.5) node[xi] {}; }

\DeclareSymbol{Xi4eac1b}{1.5}{\draw (-1,2.5) node[xic] {} -- (-1,1) node[xi] {} -- (0,0); %72
 \draw[kernels2] (0,0)  -- (1,1) node[xiesf] {};
\draw[kernels2] (0,0) node[not] {} -- (0,1.5) node[xi] {}; }

\DeclareSymbol{Xi4eac2}{1.5}{\draw (-1,2.5) node[xic] {} -- (-1,1) node[xic] {} -- (0,0); %73
 \draw[kernels2] (0,0)  -- (1,1) node[xi] {};
\draw[kernels2] (0,0) node[not] {} -- (0,1.5) node[xi] {}; }

\DeclareSymbol{Xi4eact1}{1.5}{\draw (-1,2.5) node[xic] {} -- (-1,1) node[xi] {} -- (0,0); %74
 \draw (0,0)  -- (1,1) node[xic] {};
\draw[rho] (0,0) node[not] {} -- (0,1.5) node[xi] {}; }

\DeclareSymbol{Xi4eact2}{1.5}{\draw[rho] (-1,2.5) node[xic] {} -- (-1,1) node[xi] {} -- (0,0); %75
 \draw (0,0)  -- (1,1) node[xic] {};
\draw (0,0) node[not] {} -- (0,1.5) node[xi] {}; }

\DeclareSymbol{Xi4eabis}{1.5}{\draw (-1,2.5) node[xi] {} -- (-1,1) ; \draw[kernels2] (-1,1) node[xi] {} -- (0,0); 
 \draw (0,0)  -- (1,1) node[xi] {};%76
\draw[kernels2] (0,0) node[not] {} -- (0,1.5) node[xi] {}; }

\DeclareSymbol{Xi4eabisc1}{1.5}{\draw (-1,2.5) node[xic] {} -- (-1,1) ; \draw[kernels2] (-1,1) node[xi] {} -- (0,0); 
 \draw (0,0)  -- (1,1) node[xi] {};%77
\draw[kernels2] (0,0) node[not] {} -- (0,1.5) node[xic] {}; }

\DeclareSymbol{Xi4eabisc1b}{1.5}{\draw (-1,2.5) node[xic] {} -- (-1,1) ; \draw[kernels2] (-1,1) node[xi] {} -- (0,0); 
 \draw (0,0)  -- (1,1) node[xi] {};%77
\draw[kernels2] (0,0) node[not] {} -- (0,1.5) node[xiesf] {}; }

\DeclareSymbol{Xi4eabisc1bis}{1.5}{\draw (-1,2.5) node[xi] {} -- (-1,1) ; \draw[kernels2] (-1,1) node[xi] {} -- (0,0); 
 \draw (0,0)  -- (1,1) node[xi] {};%78
\draw[kernels2] (0,0) node[not] {} -- (0,1.5) node[xi] {};
\draw (-2,1) node[] {\tiny{$i$}};
\draw (-2,2.5) node[] {\tiny{$\ell$}};
\draw (2,1) node[] {\tiny{$k$}};
\draw (0,2.5) node[] {\tiny{$j$}};
 }

\DeclareSymbol{Xi4eabisc1tris}{1.5}{\draw (-1,2.5) node[xi] {} -- (-1,1) ; \draw[kernels2] (-1,1) node[xi] {} -- (0,0); 
 \draw (0,0)  -- (1,1) node[xi] {};%78
\draw[kernels2] (0,0) node[not] {} -- (0,1.5) node[xi] {};
\draw (-2,1) node[] {\tiny{i}};
\draw (-2,2.5) node[] {\tiny{j}};
\draw (2,1) node[] {\tiny{j}};
\draw (0,2.5) node[] {\tiny{i}};
 }

\DeclareSymbol{Xi4eabisc1quater}{1.5}{\draw (-1,2.5) node[xic] {} -- (-1,1) ; \draw[kernels2] (-1,1) node[xi] {} -- (0,0); 
 \draw (0,0)  -- (1,1) node[xic] {};%78
\draw[kernels2] (0,0) node[not] {} -- (0,1.5) node[xi] {};
 }

\DeclareSymbol{Xi4eabisc2}{1.5}{\draw (-1,2.5) node[xic] {} -- (-1,1) ; \draw[kernels2] (-1,1) node[xi] {} -- (0,0); %79
 \draw (0,0)  -- (1,1) node[xic] {};
\draw[kernels2] (0,0) node[not] {} -- (0,1.5) node[xi] {}; }

\DeclareSymbol{Xi4eabisc2l}{1.5}{\draw (-1,2.5) node[xiesf] {} -- (-1,1) ; \draw[kernels2] (-1,1) node[xi] {} -- (0,0); %79
 \draw (0,0)  -- (1,1) node[xic] {};
\draw[kernels2] (0,0) node[not] {} -- (0,1.5) node[xi] {}; }

\DeclareSymbol{Xi4eabisc2r}{1.5}{\draw (-1,2.5) node[xic] {} -- (-1,1) ; \draw[kernels2] (-1,1) node[xi] {} -- (0,0); %79
 \draw (0,0)  -- (1,1) node[xiesf] {};
\draw[kernels2] (0,0) node[not] {} -- (0,1.5) node[xi] {}; }

\DeclareSymbol{Xi4eabisc3}{1.5}{\draw (-1,2.5) node[xic] {} -- (-1,1) ; \draw[kernels2] (-1,1) node[xic] {} -- (0,0); %80
 \draw (0,0)  -- (1,1) node[xi] {};
\draw[kernels2] (0,0) node[not] {} -- (0,1.5) node[xi] {}; }

\DeclareSymbol{Xi4eb}{0}{%81
\draw[kernels2] (0,2) node[xi] {} -- (-1,1) ; \draw[kernels2] (-2,2)  node[xi] {} -- (-1,1) ; \draw (-1,1)  node[not] {} -- (0,0); 
 \draw (0,0) node[xi] {}  -- (1,1) node[xi] {};
}

\DeclareSymbol{Xi4eab}{1.5}{\draw[kernels2] (-1,2.5) node[xi] {} -- (-1,1) ; \draw[kernels2] (-2,2)  node[xi] {} -- (-1,1) ; \draw (-1,1)  node[not] {} -- (0,0); %82
 \draw[kernels2] (0,0)  -- (1,1) node[xi] {};
\draw[kernels2] (0,0) node[not] {} -- (0,1.5) node[xi] {}; 
}

\DeclareSymbol{Xi4eabdis}{1.5}{\draw[kernels2] (-1,2.5) node[xies] {} -- (-1,1) ; \draw[kernels2] (-2,2)  node[xies] {} -- (-1,1) ; \draw (-1,1)  node[not] {} -- (0,0); %82
 \draw[kernels2] (0,0)  -- (1,1) node[xies] {};
\draw[kernels2] (0,0) node[not] {} -- (0,1.5) node[xies] {}; 
}

\DeclareSymbol{Xi4eabc1}{1.5}{\draw[kernels2] (-1,2.5) node[xic] {} -- (-1,1) ; \draw[kernels2] (-2,2)  node[xi] {} -- (-1,1) ; \draw (-1,1)  node[not] {} -- (0,0); %83
 \draw[kernels2] (0,0)  -- (1,1) node[xic] {};
\draw[kernels2] (0,0) node[not] {} -- (0,1.5) node[xi] {}; 
}

\DeclareSymbol{Xi4eabc2}{1.5}{\draw[kernels2] (-1,2.5) node[xi] {} -- (-1,1) ; \draw[kernels2] (-2,2)  node[xi] {} -- (-1,1) ; \draw (-1,1)  node[not] {} -- (0,0); %84
 \draw[kernels2] (0,0)  -- (1,1) node[xic] {};
\draw[kernels2] (0,0) node[not] {} -- (0,1.5) node[xic] {}; 
}

\DeclareSymbol{Xi4eabbis}{1.5}{\draw[kernels2] (-1,2.5) node[xi] {} -- (-1,1) ; \draw[kernels2] (-2,2)  node[xi] {} -- (-1,1) ; \draw[kernels2] (-1,1)  node[not] {} -- (0,0); %85
 \draw (0,0)  -- (1,1) node[xi] {};
\draw[kernels2] (0,0) node[not] {} -- (0,1.5) node[xi] {}; 
}

\DeclareSymbol{Xi4eabbisc1}{1.5}{\draw[kernels2] (-1,2.5) node[xic] {} -- (-1,1) ; \draw[kernels2] (-2,2)  node[xi] {} -- (-1,1) ; \draw[kernels2] (-1,1)  node[not] {} -- (0,0); %86
 \draw (0,0)  -- (1,1) node[xic] {};
\draw[kernels2] (0,0) node[not] {} -- (0,1.5) node[xi] {}; 
}

\DeclareSymbol{Xi4eabbisc1perm}{1.5}{\draw[kernels2] (-1,2.5) node[xi] {} -- (-1,1) ; \draw[kernels2] (-2,2)  node[xic] {} -- (-1,1) ; \draw[kernels2] (-1,1)  node[not] {} -- (0,0); %86
 \draw (0,0)  -- (1,1) node[xic] {};
\draw[kernels2] (0,0) node[not] {} -- (0,1.5) node[xi] {}; 
}

\DeclareSymbol{Xi4eabbisc2}{1.5}{\draw[kernels2] (-1,2.5) node[xi] {} -- (-1,1) ; \draw[kernels2] (-2,2)  node[xi] {} -- (-1,1) ; \draw[kernels2] (-1,1)  node[not] {} -- (0,0); %87
 \draw (0,0)  -- (1,1) node[xic] {};
\draw[kernels2] (0,0) node[not] {} -- (0,1.5) node[xic] {}; 
}

\DeclareSymbol{Xi2cbis}{0}{\draw[kernels2] (0,1) -- (0.8,2.2) node[xi] {};\draw[kernels2] (0,-0.25) node[not] {} -- (0,1); \draw[kernels2] (0,1) node[not] {} -- (-0.8,2.2) node[xi] {};}%88

\DeclareSymbol{Xi2cbis1}{0}{\draw (0,1) -- (-0.8,2.2) node[xi] {};\draw[kernels2] (0,-0.25) node[not] {} -- (0,1) node[xi] {}; }%89

%\DeclareSymbol{Xi2X}{-2}{\draw (0,-0.25) node[xi] {} -- (-1,1) node[xix] {};}%90

\DeclareSymbol{Xi2Xbis}{-2}{\draw[kernels2] (0,-0.25)  -- (-1,1) ; \draw (-1,1) node[xix] {};%91
\draw[kernels2] (0,-0.25) node[not] {} -- (1,1) node[xi] {};}

\DeclareSymbol{XXi2bis}{-2}{\draw[kernels2] (0,-0.25) -- (-1,1) node[xi] {};%92
\draw[kernels2] (0,-0.25) node[X] {} -- (1,1) node[xi] {};}

\DeclareSymbol{I1XiIXi}{0}{\draw[kernels2] (0,-0.25) -- (1,1) node[xi] {};%93
\draw (0,-0.25) node[not] {} -- (-1,1) node[xi] {};}

\DeclareSymbol{I1XiIXib}{0}{\draw  (0,-0.25) node[xi] {} -- (0,1) node[not] {};
\draw[kernels2] (0,1) -- (0,2.25) ; \draw (0,2.25) node[xi]{}; }%94

\DeclareSymbol{I1XiIXic}{0}{%95
\draw[kernels2] (0,0) -- (1,1) node[xi] {} ; 
\draw[kernels2] (0,0) node[not] {}  -- (-1,1) node[not] {} -- (0,2) node[xi] {};
}

\DeclareSymbol{thin}{1.4}{\draw[pagebackground] (-0.3,0) -- (0.3,0); \draw  (0,0) -- (0,2);}
\DeclareSymbol{thin2}{1.4}{\draw[pagebackground] (-0.3,0) -- (0.3,0); \draw[tinydots]  (0,0) -- (0,2);}

\DeclareSymbol{thick}{1.4}{\draw[pagebackground] (-0.3,0) -- (0.3,0); \draw[kernels2]  (0,0) -- (0,2);}
\DeclareSymbol{thick2}{1.4}{\draw[pagebackground] (-0.3,0) -- (0.3,0); \draw[kernels2,tinydots]  (0,0) -- (0,2);}

\DeclareSymbol{Xi4ind}{2}{\draw (0,0) node[xi,label={[label distance=-0.2em]right: \scriptsize  $ i $}]  { } -- (-1,1) node[xi,label={[label distance=-0.2em]left: \scriptsize  $ j $}] {} -- (0,2) node[xi,label={[label distance=-0.2em]right: \scriptsize  $ k $}] {} -- (-1,3) node[xi,label={[label distance=-0.2em]left: \scriptsize  $ \ell $}] {};}%115

\DeclareSymbol{Xi4c1}{2}{\draw (0,0) node[xic] {} -- (-1,1) node[xi] {} -- (0,2) node[xic] {} -- (-1,3) node[xi] {};} %119
\DeclareSymbol{IXi2ex}{0}{\draw (0,-0.25) node[xie] {} -- (-1,1) node[xi] {} ; \draw (0,-0.25)-- (1,1) node[xi] {};}
\DeclareSymbol{IXi2ex1}{0}{\draw (0,-0.25) node[xie] {} -- (-1,1) node[xi] {} -- (0,2) node[xi] {};}

\DeclareSymbol{Xi4b1}{0}{\draw(0,1.5) node[xic] {} -- (0,0); \draw (-1,1) node[xic] {} -- (0,0) node[xi] {} -- (1,1) node[xi] {};}

\DeclareSymbol{Xi4ec1}{0}{\draw (0,2) node[xi] {} -- (-1,1) node[xic] {} -- (0,0) node[xic] {} -- (1,1) node[xi] {};}
\DeclareSymbol{Xi4ec2}{0}{\draw (0,2) node[xic] {} -- (-1,1) node[xi] {} -- (0,0) node[xic] {} -- (1,1) node[xi] {};}
\DeclareSymbol{Xi4ec3}{0}{\draw (0,2) node[xic] {} -- (-1,1) node[xic] {} -- (0,0) node[xi] {} -- (1,1) node[xi] {};}

\DeclareSymbol{I1Xi4ac1}{2}{\draw[kernels2] (0,0) node[not] {} -- (-1,1) ; \draw[kernels2] (0,0) node[not] {} -- (1,1) node[xic] {} ;
\draw (-1,1) node[xi] {} -- (0,2) node[xic] {} -- (-1,3) node[xi] {};}%161

\DeclareSymbol{I1Xi4ac2}{2}{\draw[kernels2] (0,0) node[not] {} -- (-1,1) ; \draw[kernels2] (0,0) node[not] {} -- (1,1) node[xic] {} ;
\draw (-1,1) node[xi] {} -- (0,2) node[xi] {} -- (-1,3) node[xic] {};}

\DeclareSymbol{I1Xi4bp}{2}{\draw (0,0) node[not] {} -- (-1,1) node[xi] {} -- (0,2) ; \draw[kernels2] (0,2) node[not] {} -- (-1,3) node[xi] {};\draw[kernels2] (0,2)  -- (1,3) node[xi] {};
}

\DeclareSymbol{I1Xi4bc1}{2}{\draw (0,0) node[xic] {} -- (-1,1) node[xi] {} -- (0,2) ; \draw[kernels2] (0,2) node[not] {} -- (-1,3) node[xi] {};\draw[kernels2] (0,2)  -- (1,3) node[xic] {};
}%165

\DeclareSymbol{I1Xi4bc2}{2}{\draw (0,0) node[xic] {} -- (-1,1) node[xi] {} -- (0,2) ; \draw[kernels2] (0,2) node[not] {} -- (-1,3) node[xic] {};\draw[kernels2] (0,2)  -- (1,3) node[xi] {};
}

\DeclareSymbol{I1Xi4cp}{2}{\draw (0,0) node[not] {} -- (-1,1) node[not] {}; \draw[kernels2] (-1,1) -- (0,2) ; 
\draw[kernels2] (-1,1) -- (-2,2) node[xi] {} ;
\draw (0,2) node[xi] {} -- (-1,3) node[xi] {};}

\DeclareSymbol{I1Xi4cc1}{2}{\draw (0,0) node[xic] {} -- (-1,1) node[not] {}; \draw[kernels2] (-1,1) -- (0,2) ; 
\draw[kernels2] (-1,1) -- (-2,2) node[xi] {} ;
\draw (0,2) node[xic] {} -- (-1,3) node[xi] {};}%169

\DeclareSymbol{I1Xi4cc2}{2}{\draw (0,0) node[xic] {} -- (-1,1) node[not] {}; \draw[kernels2] (-1,1) -- (0,2) ; 
\draw[kernels2] (-1,1) -- (-2,2) node[xi] {} ;
\draw (0,2) node[xi] {} -- (-1,3) node[xic] {};}

\DeclareSymbol{I1Xi4abc1}{2}{\draw[kernels2] (0,0) node[not] {} -- (-1,1) ; \draw[kernels2] (0,0) node[not] {} -- (1,1) node[xic] {};\draw (-1,1) node[xi] {} -- (0,2) ; \draw[kernels2] (0,2) node[not] {} -- (-1,3) node[xic] {};\draw[kernels2] (0,2)  -- (1,3) node[xi] {}; }

\DeclareSymbol{I1Xi4abc2}{2}{\draw[kernels2] (0,0) node[not] {} -- (-1,1) ; \draw[kernels2] (0,0) node[not] {} -- (1,1) node[xic] {};\draw (-1,1) node[xi] {} -- (0,2) ; \draw[kernels2] (0,2) node[not] {} -- (-1,3) node[xi] {};\draw[kernels2] (0,2)  -- (1,3) node[xic] {}; }%173

\DeclareSymbol{R1}{0}{\draw (-1,1) node[xi] {} -- (0,0) node[not] {};%131
\draw[kernels2] (0,1.5) node[xic] {} -- (0,0) -- (1,1) node[xic] {};}
\DeclareSymbol{R2}{0}{\draw (-1,1) node[xic] {} -- (0,0) node[not] {};
\draw[kernels2] (0,1.5)  {} -- (0,0) -- (1,1)  {};
\draw (0,1.5) node[xi] {};
\draw (1,1) node[xic] {};
}%132
\DeclareSymbol{R3}{1}{\draw[kernels2] (-1,1.5)  {} -- (0,0) node[not] {} -- (1,1.5);%133
\draw (-1,1.5) node[xi] {};
\draw[kernels2] (0,3) {} -- (1,1.5) -- (2,3)  {};
\draw  (0,3) node[xic] {} ;
\draw (2,3) node[xic] {};}
\DeclareSymbol{R4}{1}{\draw[kernels2] (-1,1.5) node[xic] {} -- (0,0) node[not] {} -- (1,1.5);%134
\draw[kernels2] (0,3) {} -- (1,1.5) -- (2,3) node[xic] {};
\draw (0,3) node[xi] {};}

\DeclareSymbol{I1Xi4bcp}{2}{\draw (0,0) node[not] {} -- (-1,1) node[not] {}; \draw[kernels2] (-1,1) -- (0,2) ; %175
\draw[kernels2] (-1,1) -- (-2,2) node[xi] {} ; \draw[kernels2] (0,2) node[not] {} -- (-1,3) node[xi] {};\draw[kernels2] (0,2)  -- (1,3) node[xi] {};
}

\DeclareSymbol{I1Xi4bcc1}{2}{\draw (0,0) node[xic] {} -- (-1,1) node[not] {}; \draw[kernels2] (-1,1) -- (0,2) ; 
\draw[kernels2] (-1,1) -- (-2,2) node[xi] {} ; \draw[kernels2] (0,2) node[not] {} -- (-1,3) node[xi] {};\draw[kernels2] (0,2)  -- (1,3) node[xic] {};
}

\DeclareSymbol{I1Xi4bcc2}{2}{\draw (0,0) node[xic] {} -- (-1,1) node[not] {}; \draw[kernels2] (-1,1) -- (0,2) ; 
\draw[kernels2] (-1,1) -- (-2,2) node[xi] {} ; \draw[kernels2] (0,2) node[not] {} -- (-1,3) node[xic] {};\draw[kernels2] (0,2)  -- (1,3) node[xi] {};
} %177

\DeclareSymbol{2I1Xi4bc1}{2}{\draw[kernels2] (0,0) node[not] {} -- (-1,1) ;
\draw[kernels2] (0,0) -- (1,1);
\draw (-1,1) node[xic] {} -- (-1,2.5) node[xi] {};
\draw (1,1)  node[xic] {} -- (1,2.5) node[xi] {};
}

\DeclareSymbol{2I1Xi4bc2}{2}{\draw[kernels2] (0,0) node[not] {} -- (-1,1) ;
\draw[kernels2] (0,0) -- (1,1);
\draw (-1,1) node[xi] {} -- (-1,2.5) node[xic] {};
\draw (1,1)  node[xic] {} -- (1,2.5) node[xi] {};
}

\DeclareSymbol{diff2I1Xi4bc2}{2}{\draw (0,0) node[diff] {} -- (-1,1) ;
\draw (0,0) -- (1,1);
\draw (-1,1) node[xi] {} -- (-1,2.5) node[xic] {};
\draw (1,1)  node[xic] {} -- (1,2.5) node[xi] {};
}

\DeclareSymbol{2I1Xi4bc3}{2}{\draw[kernels2] (0,0) node[not] {} -- (-1,1) ;
\draw[kernels2] (0,0) -- (1,1);
\draw (-1,1) node[xic] {} -- (-1,2.5) node[xic] {};
\draw (1,1)  node[xi] {} -- (1,2.5) node[xi] {};
}

\DeclareSymbol{Xi41}{0}{\draw (0,1) -- (0.8,2.2) node[xic] {};\draw (0,-0.25) node[xi] {} -- (0,1) node[xi] {} -- (-0.8,2.2) node[xic] {};} 

\DeclareSymbol{Xi42}{0}{\draw (0,1) -- (0.8,2.2) node[xi] {};\draw (0,-0.25) node[xic] {} -- (0,1) node[xi] {} -- (-0.8,2.2) node[xic] {};}

\DeclareSymbol{Xi4ca1}{0}{\draw (0,1) -- (-1,2.2) node[xic] {};\draw (0,-0.25) node[xi] {} -- (0,1) ; \draw[kernels2] (0,1) node[not] {} -- (1,2.2) node[xic] {};
\draw[kernels2] (0,1) {} -- (0,2.7) node[xi] {};
}

\DeclareSymbol{Xi4ca2}{0}{\draw (0,1) -- (-1,2.2) node[xi] {};\draw (0,-0.25) node[xi] {} -- (0,1) ; \draw[kernels2] (0,1) node[not] {} -- (1,2.2) node[xic] {};
\draw[kernels2] (0,1) {} -- (0,2.7) node[xic] {};
}

\DeclareSymbol{Xi4cap}{0}{\draw (0,1) -- (-1,2.2) node[xi] {};\draw (0,-0.25) node[not] {} -- (0,1) ; \draw[kernels2] (0,1) node[not] {} -- (1,2.2) node[xi] {};
\draw[kernels2] (0,1) {} -- (0,2.7) node[xi] {};
}

\DeclareSymbol{Xi3a}{0}{
 \draw (-1,1)  node[xi] {} -- (0,0); 
 \draw (0,0) node[xi] {}  -- (1,1) node[xi] {};
}

\DeclareSymbol{Xi4ebc1}{0}{
\draw[kernels2] (0,2) node[xi] {} -- (-1,1) ; \draw[kernels2] (-2,2)  node[xic] {} -- (-1,1) ; \draw (-1,1)  node[not] {} -- (0,0); 
 \draw (0,0) node[xic] {}  -- (1,1) node[xi] {};
}

\DeclareSymbol{Xi4ebc2}{0}{
\draw[kernels2] (0,2) node[xi] {} -- (-1,1) ; \draw[kernels2] (-2,2)  node[xi] {} -- (-1,1) ; \draw (-1,1)  node[not] {} -- (0,0); 
 \draw (0,0) node[xic] {}  -- (1,1) node[xic] {};
}

\DeclareSymbol{Xi2cbispex}{0}{\draw[kernels2] (0,1) -- (0.8,2.2) node[xi] {};\draw (0,-0.25) node[xie] {} -- (0,1); \draw[kernels2] (0,1) node[not] {} -- (-0.8,2.2) node[xi] {};}

\DeclareSymbol{Xi2cbis1p}{0}{\draw (0,1) -- (-0.8,2.2) node[xi] {};\draw (0,-0.25) node[not] {} -- (0,1) node[xi] {}; }

\DeclareSymbol{Xi2Xp}{-2}{\draw (0,-0.25) node[not] {} -- (-1,1) node[xix] {};} % 229 not used

\DeclareSymbol{I1XiIXib}{0}{\draw  (0,-0.25) node[xi] {} -- (0,1) node[not] {};
\draw[kernels2] (0,1) -- (0,2.25) ; \draw (0,2.25) node[xi]{}; }

\DeclareSymbol{IXi2b}{0}{\draw  (0,-0.25) node[xi] {} -- (0,1) node[not] {};
\draw (0,1) -- (0,2.25) ; \draw (0,2.25) node[xi]{}; }

\DeclareSymbol{IXi2bex}{0}{\draw  (0,-0.25) node[xi] {} -- (0,1) node[xie] {};
\draw (0,1) -- (0,2.25) ; \draw (0,2.25) node[xi]{}; }

 \def\1{\mathbf{\symbol{1}}}

\def\eps{\varepsilon}

\DeclareSymbol{diff}{0}{
\draw (0,0.5) node[diff] {};
}

\DeclareSymbol{diff1}{0}{
\draw (0,0.5) node[diff1] {};
}

\DeclareSymbol{diff2}{0}{
\draw (0,0.5) node[diff2] {};
}

\DeclareSymbol{geo}{0}{
\draw (0,0) node[diff] {};
\draw (0.3,0) node[diff] {};
}

\DeclareSymbol{generic}{0}{
\draw (0,0.6) node[xi] {};
}
\DeclareSymbol{derivative}{scale=0.05,baseline=-2}{
\coordinate (root) at (0,-0.4);
			\coordinate (t1) at (-.8,1.3);
			\draw[kernels2] (t1) -- (root);
			\node[not] (rootnode) at (root) {};
}
\DeclareSymbol{derivative2}{scale=0.05,baseline=-2}{
\coordinate (root) at (0,-0.4);
			\coordinate (t1) at (-.8,1.3);
			\coordinate (t2) at (.8,1.3);
			\draw[kernels2] (t1) -- (root);
			\draw[kernels2] (t2) -- (root);
			\node[not] (rootnode) at (root) {};
}

\DeclareSymbol{g}{0}{
\draw (0,0.6) node[g] {};
}

\DeclareSymbol{Ito}{0}{
\draw (0,0.6) node[xies] {};
}

\DeclareSymbol{Itob}{0}{
\draw (0,0.6) node[xiesf] {};
}

\DeclareSymbol{greycirc}{0}{
\draw (0,0.3) node[xi] {};
}

\DeclareSymbol{not}{0}{
\draw (0,0.6) node[not] {};
\draw[tinydots] (0,0.6) circle (0.8);
}

\DeclareSymbol{genericb}{0}{
\draw (0,0.6) node[xic] {};
}

\DeclareSymbol{bluecirc}{0}{
\draw (0,0.3) node[xic] {};
}

\DeclareSymbol{genericxix}{0}{
\draw (0,0.6) node[xix] {};
}

\DeclareSymbol{genericX}{0}{
\draw (0,0.6) node[X] {};
}

\DeclareSymbol{diffIto}{1}{
\draw  (0,2.5) -- (0,0) ;
\draw (0,-0.1) node[diff] {};
\draw (0,2.5) node[xies] {};
}
\DeclareSymbol{Itodiff}{2}{
\draw(0,2.9) -- (0,-0.2);
\draw (0,2.9) node[diff] {};
\draw (0,-0.1) node[xies] {};
}

\DeclareSymbol{diffgeneric}{1}{
\draw  (0,2.5) -- (0,0) ;
\draw (0,-0.1) node[diff] {};
\draw (0,2.5) node[xi] {};
}

\DeclareSymbol{genericdiff}{2}{
\draw(0,2.9) -- (0,-0.2);
\draw (0,2.9) node[diff] {};
\draw (0,-0.1) node[xi] {};
}

\DeclareSymbol{diffdot}{2}{
\draw  (0,3) -- (0,-0.1) ;
\draw (0,3) node[not] {};
\draw (0,-0.1) node[diff] {};
}

\DeclareSymbol{diffdotmini}{0}{
\draw  (0,0) -- (0,1.2) ;
\draw (0,1.2) node[not] {};
\draw (0,0) node[diffmini] {};
}

\DeclareSymbol{dotdiff}{2}{
\draw[kernelsmod]  (0,3) -- (0,-0.1) ;
\draw (0,3) node[diff] {};
\draw (0,-0.1) node[not] {};
}

\DeclareSymbol{3}{-2}{\draw (0,-0.25) node[xi] {} -- (-1,1) node[xi] {};}
\DeclareSymbol{AAA}{-0.5}{\draw (0,0) node[xi] {} -- (-1,1) node[xi] {} -- (0,2) node[xi] {};
	\draw (0,0) node[xi] {} -- (-1,-1) node[xi] {};}
\DeclareSymbol{AAM}{0.5}{\draw (-1,1)  -- (0,2) node[xi] {};
	\draw (-1,1) node[xi] {} -- (0,0) node[xi] {} -- (1,1) node[xi] {};}
\DeclareSymbol{AMA}{-4}{\draw (-1,1) node[xi] {} -- (0,0) node[xi] {} -- (1,1) node[xi] {};
	\draw (0,0) node[xi] {} -- (0,-1.4) node[xi] {};}
\DeclareSymbol{AMM}{0.5}{\draw (0,0)  -- (0,1.4) node[xi] {};
	\draw (-1,1) node[xi] {} -- (0,0) node[xi] {} -- (1,1) node[xi] {};}

\DeclareSymbol{dotdiff1}{2}{
\draw[kernelsmod]  (0,3) -- (0,-0.1) ;
\draw (0,3) node[diff1] {};
\draw (0,-0.1) node[not] {};
}

\DeclareSymbol{dotdiff1mini}{0}{
\draw[kernelsmod]  (0,1.2) -- (0,0) ;
\draw (0,1.2) node[diffmini] {};
\draw (0,0) node[not] {};
}

\DeclareSymbol{dotdiff2}{2}{
\draw (0,3) -- (0,-0.1) ;
\draw (0,3) node[diff] {};
\draw (0,-0.1) node[not] {};
}

\DeclareSymbol{dotdiff2mini}{0}{
\draw (0,1.2) -- (0,0) ;
\draw (0,1.2) node[diffmini] {};
\draw (0,0) node[not] {};
}

\DeclareSymbol{dotdiffstraight}{0}{
\draw  (0,3) -- (0,-0.1) ;
\draw (0,3) node[diff] {};
\draw (0,-0.1) node[not] {};
}

\DeclareSymbol{arbre1}{0}{
\draw  (0,0) -- (1.5,1.5) ;
\draw (1.5,1.5) node[not] {};
\draw (0,0) node[not] {};
}

\DeclareSymbol{arbre2}{0}{
\draw  (0,0) -- (1.5,1.5) ;
\draw[kernelsmod] (0,0) -- (-1.5,1.5);
\draw (1.5,1.5) node[not] {};
\draw (0,0) node[not] {};
\draw (-1.5,1.5) node[xi] {};
}

\DeclareSymbol{arbre3}{0}{
\draw  (0,0) -- (1.5,1.5) ;
\draw[kernelsmod] (1.5,1.5) -- (0,3);
\draw (0,0) node[not] {};
\draw (1.5,1.5) node[not] {};
\draw (0,3) node[xi] {};
}

\DeclareSymbol{treeeval}{0}{
\draw (0,0) -- (1,1);
\draw (0,0) node[xi] {};
\draw (1.25,1.25) node[xi] {};
\draw (-0.6,0.6) node[]{\tiny{$i$}};
\draw (0.65,1.85) node[]{\tiny{$j$}};
}

\DeclareSymbol{testeval}{0}{
\draw (0,0) -- (1,1);
\draw (0,0) -- (-1,1);
\draw (0,0) node[xi] {};
\draw (1.25,1.25) node[xi] {};
\draw (-1.25,1.25) node[xi] {};
\draw (-0.6,-0.6) node[]{\tiny{$i$}};
\draw (0.65,1.85) node[]{\tiny{$j$}};
\draw (-1.95,1.85) node[]{\tiny{$k$}};
}

\DeclareSymbol{treeeval2}{0}{
\draw[kernelsmod] (-0.25,-1) -- (1,0.5) ;
\draw[kernelsmod] (1,0.5) -- (-0.25,2);
\draw (1,0.5) node[diff2] {};
\draw (-0.25,-1) node[not] {};
\draw (-0.25,2) node[xi] {};
\draw (-0.6,1.2) node[]{\tiny{1}};
}

\DeclareSymbol{arbreact}{1}{
\draw (0,0) node[not] {};
\draw[kernelsmod] (0,0) -- (1,1);
\draw[kernelsmod] (0,0) -- (-1,1);
\draw (-1,1) node[xic] {};
\draw  (0,2) -- (1,1) ;
\draw (0,2) node[xic] {};
\draw (1,1) node[xi] {};
}

\DeclareSymbol{arbreact1}{0}{
\draw (0,-1.5) -- (0,0);
\draw[kernelsmod] (0,0) -- (1,1);
\draw[kernelsmod] (0,0) -- (-1,1);
\draw  (0,2) -- (1,1) ;
\draw (0,-1.5) node[diff] {};
\draw (0,0) node[not] {};
\draw (-1,1) node[xic] {};
\draw (0,2) node[xic] {};
\draw (1,1) node[xi] {};
}

\DeclareSymbol{arbreact2}{0}{
\draw (0,-0.75) -- (-1,0.5); 
\draw (0,-0.75) -- (1,0.5);
\draw (0,1.5) -- (1,0.5);
\draw (0,1.5) node[xic] {};
\draw (1,0.5) node[xi] {};
\draw (-1,0.5) node[xic] {};
\draw (0,-0.75) node[diff] {};
}

\DeclareSymbol{arbreact3}{0}{
\draw[kernelsmod] (0,-0.75) -- (-1,0.5); 
\draw[kernelsmod] (0,-0.75) -- (1,0.5);
\draw (0,1.75) -- (1,0.5);
\draw (2,1.75) -- (1,0.5);
\draw (0,1.75) node[xic] {};
\draw (1,0.5) node[diff] {};
\draw (-1,0.5) node[xic] {};
\draw (2,1.75) node[xi] {};
\draw (0,-0.75) node[not] {};
}

\DeclareSymbol{pre_im_1}{0}{
\draw[kernels2] (0,-0.5) node[not] {} -- (-0.6,0.5) ;
\draw[kernels2] (0,-0.5) -- (0.6,0.5);
\draw (0,1.1)  -- (-0.55,2);
\draw (0,1.1)  -- (0.55,2);
\draw (0,0.7) node[g] {};
\draw (0,2.2) node[g] {};
}

\DeclareSymbol{disconnect}{0}{
\draw[kernels2] (0,-0.5) node[not] {} -- (-0.6,0.5) ;
\draw[kernels2] (0,-0.5) -- (0.6,0.5);
\draw (-0.55,1.1)  -- (-0.55,2.3);
\draw (0.55,2.3) -- (0.55,1.5) -- (1.2,1.5) -- (1.2,3.5) -- (0.55,3.5) -- (0.55,2.7);
\draw (0,0.7) node[g] {};
\draw (0,2.5) node[g] {};
}

\DeclareSymbol{pre_im_2}{2}{\draw[kernels2] (0,0) node[not] {} -- (-1,1) node[not] {};
\draw[kernels2] (0,0) -- (1,1) node[not] {};
\draw[kernels2] (-1,1) -- (-1.5,2.5);
\draw[kernels2] (-1,1) -- (-0.5,2.5);
\draw[kernels2] (1,1) -- (0.5,2.5);
\draw[kernels2] (1,1) -- (1.5,2.5);
\draw (-1,2.7) node[g] {};
\draw (1,2.7) node[g] {};
}

\DeclareSymbol{CX_rec}{0}{
\draw [black] (-0.3,1) to (-0.3,-0.3);
\draw [black] (0.3,1) to (0.3,-0.3);
\draw [black] (-0.3,1) to (-0.3,2.3);
\draw [black] (0.3,1) to (0.3,2.3);
\draw (0,1) node[rec] {};
}

\DeclareSymbol{CX_cerc}{0}{
\draw [black] (0,1) to (0,-0.3);
\draw (0,1) node[cerc] {};
}

\DeclareSymbol{proof0}{0}{
\draw (0,-3) node[] {\tiny$\tau_0$};
\draw (0,-2.3) -- (0,0.5);`
\draw (0,0.5) -- (-1.5,2.5);
\draw (0,0.5) -- (1.5,2.5);
\draw (-1,-2) node[] {\tiny$v$};
\draw (-1.5,3.1) node[] {\tiny$\tau_1$};
\draw (1.5,3.1) node[] {\tiny$\tau_2$};
\draw (0,0.5) node[diff] {};
}

\DeclareSymbol{proof0b}{0}{
\draw (0,0.5) -- (-1.5,2.5);
\draw (0,0.5) -- (1.5,2.5);
\draw (-1.5,3.1) node[] {\tiny$\tau_1$};
\draw (1.5,3.1) node[] {\tiny$\tau_2$};
\draw (0,0.5) node[diff] {};
}

%\DeclareSymbol{proof0}{0}{
%\draw (-0.2,1) -- (-0.2,-2.5);
%\draw (-2,4) -- (0,1);
%\draw (2,4) -- (0,1);
%\draw[kernelsmod] (0.2,1) -- (0.2,-3);
%\draw (0,-4) node[] {\tiny$\tau_0$};
%\draw (0,1) node[diff] {};
%\draw (1.5,-2) node[] {\tiny$v$};
%\draw (2,5) node[] {\tiny$\tau_2$};
%\draw (-2,5) node[] {\tiny$\tau_1$};
%}

\DeclareSymbol{proof}{0}{
\draw[kernelsmod] (-2,3) -- (0,0);
\draw[kernelsmod] (2,3) -- (0,0);
\draw (0,0) node[not] {};
}

\DeclareSymbol{prooftri}{0}{
\draw[kernelsmod] (-2,3) -- (0,0);
\draw[kernelsmod] (0,4) -- (0,0);
\draw (2,2.7)--(0,0);
\draw (0,0) node[not] {};
}

\DeclareSymbol{proofqua}{0}{
\draw[kernelsmod] (-3,3) -- (0,0);
\draw[kernelsmod] (-1,4) -- (0,0);
\draw (1,3.6)--(0,0);
\draw (3,2.7)--(0,0);
\draw (0,0) node[not] {};
}

%\DeclareSymbol{proof1_0}{0}{
%\draw[kernelsmod] (-2,4) -- (0,1);
%\draw[kernelsmod] (2,4) -- (0,1);
%\draw (-0.18,1) -- (-0.18,-2.5);
%\draw[kernelsmod]  (0.18,1) -- (0.18,-2.9);
%\draw (0,-4) node[] {$\tau_0$};
%\draw (0,1) node[dot] {};
%\draw (1.5,-2) node[] {$v$};
%\draw (2,5) node[] {$\tau_2$};
%\draw (-2,5) node[] {$\tau_1$};
%}

\DeclareSymbol{proof1_1}{0}{
\draw (0,-2.7) node[] {\tiny$\tau_0$};
\draw (0,-2) -- (0,0.5);`
\draw[kernelsmod] (0,0.5) -- (-1.5,2.5); 
\draw[kernelsmod] (0,0.5) -- (1.5,2.5);
\draw (-1,-1.7) node[] {\tiny$v$};
\draw (-1.5,3.1) node[] {\tiny$\tau_1$};
\draw (1.5,3.1) node[] {\tiny$\tau_2$};
\draw (0,0.5) node[not] {};
}

\DeclareSymbol{proof1b_1}{0}{
\draw[kernelsmod] (0,0.5) -- (-1.5,2.5); 
\draw[kernelsmod] (0,0.5) -- (1.5,2.5);
\draw (-1.5,3.1) node[] {\tiny$\tau_1$};
\draw (1.5,3.1) node[] {\tiny$\tau_2$};
\draw (0,0.5) node[not] {};
}

\DeclareSymbol{proof1_2}{0}{
\draw (0,-2.7) node[] {\tiny$\tau_0$};
\draw[kernelsmod] (0,-2) -- (0,0.5);`
\draw[kernelsmod] (0,0.5) -- (-1.5,2.5); 
\draw[kernelsmod] (0,0.5) -- (1.5,2.5);
\draw (-1,-1.7) node[] {\tiny$v$};
\draw (-1.5,3.1) node[] {\tiny$\tau_1$};
\draw (1.5,3.1) node[] {\tiny$\tau_2$};
\draw (0,0.5) node[not] {};
}

\DeclareSymbol{proof2_1}{0}{
\draw (0,-2.7) node[] {\tiny$\tau_0$};
\draw (0,-2) -- (-1.8,-0.3);
\draw (-1.8,0.7) -- (0,2.5); 
\draw (1,-1.7) node[] {\tiny$v$};
\draw (-2.5,1.2) node[] {\tiny$r_1$};
\draw (0,3.1) node[] {\tiny$\tau_2$};
\draw (-2.5,0) node[] {\tiny$\tau_1$};
}

\DeclareSymbol{proof2b_1}{0}{
\draw (-1.8,0.7) -- (0,2.5); 
\draw (-2.5,1.2) node[] {\tiny$r_1$};
\draw (0,3.1) node[] {\tiny$\tau_2$};
\draw (-2.5,0) node[] {\tiny$\tau_1$};
}

\DeclareSymbol{proof2b_2}{0}{
\draw (-1.8,0.7) -- (0,2.5); 
\draw (-2.5,1.2) node[] {\tiny$r_2$};
\draw (0,3.1) node[] {\tiny$\tau_1$};
\draw (-2.5,0) node[] {\tiny$\tau_2$};
}

\DeclareSymbol{proof2_2}{0}{
\draw (0,-2.7) node[] {\tiny$\tau_0$};
\draw[kernelsmod] (0,-2) -- (-1.8,-0.3);
\draw (-1.8,0.7) -- (0,2.5); 
\draw (1,-1.7) node[] {\tiny$v$};
\draw (-2.5,1.2) node[] {\tiny$r_1$};
\draw (0,3.1) node[] {\tiny$\tau_2$};
\draw (-2.5,0) node[] {\tiny$\tau_1$};
}

\DeclareSymbol{proof3_1}{0}{
\draw (0,-2.7) node[] {\tiny$\tau_0$};
\draw (0,-2) -- (1.8,-0.3);
\draw (1.8,0.7) -- (0,2.5); 
\draw (-1,-1.7) node[] {\tiny$v$};
\draw (2.5,1.2) node[] {\tiny$r_2$};
\draw (0,3.1) node[] {\tiny$\tau_1$};
\draw (2.5,0) node[] {\tiny$\tau_2$};
}

\DeclareSymbol{proof3_2}{0}{
\draw (0,-2.7) node[] {\tiny$\tau_0$};
\draw[kernelsmod] (0,-2) -- (1.8,-0.3);
\draw (1.8,0.7) -- (0,2.5); 
\draw (-1,-1.7) node[] {\tiny$v$};
\draw (2.5,1.2) node[] {\tiny$r_2$};
\draw (0,3.1) node[] {\tiny$\tau_1$};
\draw (2.5,0) node[] {\tiny$\tau_2$};
}

\DeclareSymbol{proof4_1}{0}{
\draw (0,-2) node[] {\tiny$\tau_0$};
\draw (0,-1.3) -- (-1.5,1.8); 
\draw  (0,-1.3) -- (1.5,1.8);
\draw (-1,-1) node[] {\tiny$v$};
\draw (-1.5,2.4) node[] {\tiny$\tau_1$};
\draw (1.5,2.4) node[] {\tiny$\tau_2$};
}

\DeclareSymbol{proof4_2}{0}{
\draw (0,-2) node[] {\tiny$\tau_0$};
\draw[kernelsmod] (0,-1.3) -- (-1.5,1.8); 
\draw  (0,-1.3) -- (1.5,1.8);
\draw (-1,-1) node[] {\tiny$v$};
\draw (-1.5,2.4) node[] {\tiny$\tau_1$};
\draw (1.5,2.4) node[] {\tiny$\tau_2$};
}

\DeclareSymbol{proof4_3}{0}{
\draw (0,-2) node[] {\tiny$\tau_0$};
\draw (0,-1.3) -- (-1.5,1.8); 
\draw[kernelsmod]  (0,-1.3) -- (1.5,1.8);
\draw (-1,-1) node[] {\tiny$v$};
\draw (-1.5,2.4) node[] {\tiny$\tau_1$};
\draw (1.5,2.4) node[] {\tiny$\tau_2$};
}

\DeclareSymbol{prooftriple}{0}{
\draw (0,-2.7) node[] {\tiny$\tau_0$};
\draw (0,-2) -- (0,0.25);`
\draw (0,0.25) -- (-1.5,2.5); 
\draw (0,0.25) -- (1.5,2.5);
\draw (0,0.25) -- (0,2.5);
\draw (-1,-1.7) node[] {\tiny$v$};
\draw (-2,3.1) node[] {\tiny$\tau_1$};
\draw (0,3.1) node[] {\tiny$\tau_2$};
\draw (2,3.1) node[] {\tiny$\tau_3$};
\draw (0,0.25) node[diff] {};
}

\DeclareSymbol{prooftriple1}{0}{
\draw (0,-2.7) node[] {\tiny$\tau_0$};
\draw (0,-2) -- (0,0.25);
\draw[kernelsmod] (0,0.25) -- (-1.5,2.5); 
\draw (0,0.25) -- (1.5,2.5);
\draw[kernelsmod] (0,0.25) -- (0,2.5);
\draw (-1,-1.7) node[] {\tiny$v$};
\draw (-2,3.1) node[] {\tiny$\tau_1$};
\draw (0,3.1) node[] {\tiny$\tau_2$};
\draw (2,3.1) node[] {\tiny$\tau_3$};
\draw (0,0.25) node[not] {};
}

\DeclareSymbol{prooftripleperm1}{0}{
\draw (0,-2.7) node[] {\tiny$\tau_0$};
\draw (0,-2) -- (0,0.25);
\draw[kernelsmod] (0,0.25) -- (-3.5,2.5); 
\draw (0,0.25) -- (2.5,2.5);
\draw[kernelsmod] (0,0.25) -- (0,2.5);
\draw (-1,-1.7) node[] {\tiny$v$};
\draw (-3,3.1) node[] {\tiny$\tau_{\sigma_1}$};
\draw (0,3.1) node[] {\tiny$\tau_{\sigma_{\!2}}$};
\draw (3,3.1) node[] {\tiny$\tau_{\sigma_3}$};
\draw (0,0.25) node[not] {};
}

\DeclareSymbol{proofdouble}{0}{
\draw (0,0.5) -- (-1.5,2.5);
\draw (0,0.5) -- (1.5,2.5);
\draw (-1.5,3.1) node[] {\tiny$\tau_1$};
\draw (1.5,3.1) node[] {\tiny$\tau_2$};
\draw (0,0.5) node[diff] {};
}

\DeclareSymbol{proofquadruple}{0}{
\draw (0,0) -- (-2.5,2.5); 
\draw (0,0) -- (-1,2.5);
\draw (0,0) -- (1,2.5);
\draw (0,0) -- (2.5,2.5);
\draw (-3,3.1) node[] {\tiny$\tau_1$};
\draw (-1,3.1) node[] {\tiny$\tau_2$};
\draw (1,3.1) node[] {\tiny$\tau_3$};
\draw (3,3.1) node[] {\tiny$\tau_4$};
\draw (0,0) node[diff] {};
}

\DeclareSymbol{proofquadruple1}{0}{
\draw[kernelsmod] (0,0) -- (-2.5,2.5); 
\draw[kernelsmod] (0,0) -- (-1,2.5);
\draw (0,0) -- (1,2.5);
\draw (0,0) -- (2.5,2.5);
\draw (-3,3.1) node[] {\tiny$\tau_1$};
\draw (-1,3.1) node[] {\tiny$\tau_2$};
\draw (1,3.1) node[] {\tiny$\tau_3$};
\draw (3,3.1) node[] {\tiny$\tau_4$};
\draw (0,0) node[not] {};
}

\DeclareSymbol{proofquadrupleperm1}{0}{
\draw[kernelsmod] (0,-0.4) -- (-3.5,2.3); 
\draw[kernelsmod] (0,-0.4) -- (-1,2.3);
\draw (0,-0.4) -- (1,2.5);
\draw (0,-0.4) -- (3.5,2.5);
\draw (-4.5,3.1) node[] {\tiny$\tau_{\sigma_1}$};
\draw (-1.5,3.1) node[] {\tiny$\tau_{\sigma_2}$};
\draw (1.5,3.1) node[] {\tiny$\tau_{\sigma_3}$};
\draw (4.5,3.1) node[] {\tiny$\tau_{\sigma_4}$};
\draw (0,-0.4) node[not] {};
}

%setting the heading and footer

\pagenumbering{arabic}

\DeclareMathAlphabet{\mathpzc}{OT1}{pzc}{m}{it}

%%%%%%%%%%%%%%%%%%%%%%%%%%%%%%%%%%%%%%%%%%%%%%%%%%%%%%%%
%
%
%              End of the tikz code for trees
%
%
%%%%%%%%%%%%%%%%%%%%%%%%%%%%%%%%%%%%%%%%%%%%%%%%%%%%%%%%

\let\d\partial
\let\eps\varepsilon

\def\eqref#1{(\ref{#1})}

\def\Ito{{\text{\rm\tiny It\^o}}}

\def\geo{{\text{\rm\tiny geo}}}

\def\nice{{\text{\rm\tiny nice}}}

\makeatletter % Stolen from the internet to make a fat \cdot which isn't as fat as a \bullet
\newcommand*{\bigcdot}{}% Check if undefined
\DeclareRobustCommand*{\bigcdot}{%
  \mathbin{\mathpalette\bigcdot@{}}%
}
\newcommand*{\bigcdot@scalefactor}{.5}
\newcommand*{\bigcdot@widthfactor}{1.15}
\newcommand*{\bigcdot@}[2]{%
  % #1: math style
  % #2: unused
  \sbox0{$#1\vcenter{}$}% math axis
  \sbox2{$#1\cdot\m@th$}%
  \hbox to \bigcdot@widthfactor\wd2{%
    \hfil
    \raise\ht0\hbox{%
      \scalebox{\bigcdot@scalefactor}{%
        \lower\ht0\hbox{$#1\bullet\m@th$}%
      }%
    }%
    \hfil
  }%
}
\makeatother

\def\BPHZ{\textnormal{\tiny \textsc{bphz}}}

\def\Moll{\mathrm{Moll}}

\tcbset
{colframe=boxcolor,colback=symbols!7!pagebackground,coltext=pageforeground,
fonttitle=\bfseries,nobeforeafter,center title,size=fbox,boxsep=1.5pt,
top=0mm,bottom=0mm,boxsep=0mm,tcbox raise base}

\def\two{{\<generic>\kern0.05em\<genericb>}}
\def\twoI{{\<Ito>\kern0.05em\<Itob>}}

\def\mail#1{\burlalt{#1}{mailto:#1}}

\begin{document}

\title{Symmetries for the gKPZ equation via multi-indices}
\author{Carlo Bellingeri$^1$, Yvain Bruned$^2$}
\institute{IRIMAS, 18 Rue des Frères Lumière, 68200 
Mulhouse, France \and Universite de Lorraine, CNRS, IECL, F-54000 Nancy, France
\\
Email: \begin{minipage}[t]{\linewidth}
\mail{carlo.bellingeri@uha.fr},\\
\mail{yvain.bruned@univ-lorraine.fr}
\end{minipage}}

\maketitle

\begin{abstract}
In this work, we study the two main symmetries for the one-dimensional generalised KPZ equation (gKPZ): the chain rule and the Itô Isometry. We consider the equation in the full subcritical regime and use multi-indices that avoid an over-parametrisation of the renormalised equation to compute the dimension of the two spaces associated with these two symmetries. Our proof is quite elementary and shows that multi-indices provide in this case a simplification in comparison to the results obtained via decorated trees. It also completes the program on the study of the chain rule initiated in \cite{BGHZ} and continued in \cite{BD24}.
\\
\noindent {\scriptsize\textit{MSC classification:}  60L70, 60H15.} 
\end{abstract}

\setcounter{tocdepth}{2}
\tableofcontents

\section{Introduction}

In this paper, we consider the symmetries of the following equation:
\begin{equs} \label{eq:gen_KPZ}
\partial_t u   = \partial_x^{2} u+ \Gamma(u) (\partial_x u)^{2} +g(u)\partial_x u + h(u)+  \sigma(u) \xi\,,
\end{equs}
which we call the generalised-KPZ equation. Here, $ (t,x) \in \mathbb{R}_+ \times  \mathbb{T}$,  where $ \mathbb{T} $ is the one dimensional torus and the non-linearities $\sigma,\Gamma, h, g\in \mathcal{C}^{\infty}(\mathbb{R}$ are smooth functions and the noise $\xi$ is a random distributional admissible noise whose trajectories belong a.s. to the negative H\"older space $\mathcal{C}^{-2+\delta}(\mathbb{R}_+ \times  \mathbb{T})$ for some $0<\delta< 1$ (i.e. in the subcritical regime), whose law is space symmetric, see Definition~\ref{defn_admiss_noise} and Definition~\ref{symmetric_noise}. Via a standard scaling argument, one expects the regularity of the solution to be that of the linear additive stochastic heat equation driven by $\xi$,
\[
\partial_t u   =\partial_x^{2} u+ \xi\,,
\]
which has the formal mild solution $u = \mathcal{P} * \xi$, where $\mathcal{P}$ is the heat kernel on $\mathbb{T}$ and $*$ denotes space-time convolution over $\mathbb{R}_+ \times \mathbb{T}$. The trajectories of this solution belong to $\mathcal{C}^{\delta}(\mathbb{R}_+ \times \mathbb{T})$, gaining $+2$ orders of regularity from the Schauder estimate as the space-time white noise is convolved with the heat kernel. Consequently, the trajectories of $\partial_x u$ lie in $\mathcal{C}^{\delta-1}(\mathbb{R}_+ \times \mathbb{T})$, which is a space-time distribution. As a result, the product $(\partial_x u)^2$ is ill-defined, making \eqref{eq:gen_KPZ} a singular stochastic partial differential equation (SPDE).

 This equation is solved in the subcritical regime via the theory of Regularity Structures. It was one of the main motivations for developing a self contained solution theory in a series of papers \cite{reg,BHZ,CH,BCCH} that cover a large class of singular SPDEs. Surveys on these results can be found in \cite{FrizHai,BH20}. By subcritical, we mean that the regularity of the noise is such that one can apply the theory of Regularity Structures. Indeed, the regularity of the noise governs the length of local approximations of the solution. This length increases when the regularity of the noise decreases. When it reaches a critical value, it becomes infinite. In the context of gKPZ, one has to have the Hölder space-time regularity of the noise $ \xi $ to be greater than $  -2$. For a more precise description, one can look at Definition~\ref{defn_admiss_noise}.

  The well-posedness of this equation 
can be formulated via the convergence of a properly given family of regularised and renormalised equations. Space-time noise $  \xi$ is replaced by a smoothened version $ \xi_{\eps} $ that converges to $ \xi $ when $\eps$ converges to zero. This is performed by convolution via a class of mollifiers defined precisely at the beginning of Section~\ref{section_2.2}.  Then, one writes a renormalised equation of the form:
\begin{equs}[eq:renorm nonlocal intro1]
	\d_t u_{\eps} & = \d_x^2 u_{\eps} + \Gamma(u_{\eps})\,(\d_x u_{\eps})^2
	+ g(u_{\eps})\,\d_x u_{\eps}
	+h(u_{\eps}) + \sigma(u_{\eps})\, \xi_{\eps}\; \\ & + \sum_{\tau \in \mathfrak{B}_{ \<derivative2>, \<generic>}^-} C_{\eps}(\tau)  \Upsilon_{\Gamma,\sigma}(\tau)(u_{\eps})\,.
\end{equs}
where $\mathfrak{B}_{ \<derivative2>, \<generic>}^-$ is a combinatorial set formed of decorated trees whose cardinality depends on the regularity of the noise. The coefficients (elementary differentials)  $ \Upsilon_{\Gamma,\sigma}(\tau)(u_{\eps}) $  depend on $ \Gamma, \sigma $ and the solution $ u_{\eps}$. The counter-terms described by $\mathfrak{B}_{ \<derivative2>, \<generic>}^-$ are needed for renormalising the various distributional products of the equation. The solution $u_\eps$ of the random PDEs \eqref{eq:renorm nonlocal intro1} converges as $\eps\to 0$ in probability, locally in time, to a nontrivial limit $u$.
In the end, from the applications of the results contained in \cite{reg,BHZ,CH,BCCH}, one gets a finite dimensional space of solutions with the choice of the parameters $ C_{\eps}(\tau) $. Indeed, each $ C_{\eps}(\tau) $ can be split into $ C_{\eps}(\tau) = \hat{C}_{\eps}(\tau) + C(\tau) $ where $ \hat{C}_{\eps}(\tau) $ is a potentially diverging constant needed for the convergence and $C(\tau) $ is a finite constant that corresponds to a degree of freedom. Then by the choice of the parameters $ C_{\eps}(\tau) $, we mean the choice of the finite part of these constants that is $C(\tau)$. 

 The goal of this paper is then to introduce proper symmetries among this family of solutions in order to reduce the dimensions of these vector spaces and reduce as far as possible the freedom  of choice to get a relevant notion of  solution. The two key symmetries studied for this equation are the following:
\begin{itemize}
	\item \textbf{Chain rule symmetry:} The equation \eqref{eq:gen_KPZ} transforms covariantly under composition with a diffeomorphism $\phi \colon \mathbb{R} \to \mathbb{R}$, in the sense that for any limit solution $u$ to the regularized equation \eqref{eq:renorm nonlocal intro1} then $\phi \circ u$ formally satisfies an equation of the same type, with coefficients modified according to the standard chain rule.
	
	\item \textbf{Itô isometry symmetry:} In case the law of $\xi$ is a stationary Gaussian distribution, the law of any limit solution $u$ to the regularized equation \eqref{eq:renorm nonlocal intro1} depends on the function $\sigma$ only through its square $\sigma^2$. In other words, the equation is invariant under the transformation $\sigma \mapsto -\sigma$, reflecting the symmetry property of Gaussian random variables.
\end{itemize}

This idea of studying symmetries was already applied to the geometric stochastic heat equation in \cite{BGHZ}, a system of SPDEs taking values in a compact Riemannian manifold. In local coordinates, after isometrically embedding the manifold into $\mathbb{R}^d$ and using the Einstein summation convention, this equation takes the form
\begin{equation}\label{high_dimension_equation}
\partial_t u^{\alpha} = \partial_{xx} u^{\alpha}+ \Gamma^{\alpha}_{\beta \gamma}(u)\, \partial_x u^{\beta} \partial_x u^{\gamma} + g^\alpha_{\beta}(u)\,\partial_x u^{\beta}+ h^{\alpha}(u)+\sigma^{\alpha}_j(u)\, \zeta^j\,,
\end{equation}
where $\alpha, \beta, \gamma \in \{1, \ldots , d\}$, $j \in \{1, \ldots, m\}$, and $\Gamma=\{\Gamma^{\alpha}_{\beta\gamma}\}$, $\sigma=\{\sigma^{\alpha}_j\}$, $g=\{g^{\alpha}_{\beta}\}$, and $h=\{h^{\alpha}\}$ are given smooth functions. Moreover, $\zeta = (\zeta^1, \dots, \zeta^m)$ denotes $m$ i.i.d. copies of a space-time white noise on $\mathbb{R}_+ \times \mathbb{T}$. The geometric motivation for equation \eqref{high_dimension_equation} is to construct a natural Langevin dynamics taking values in the space of loops in a compact Riemannian manifold.  A first attempt at this construction appears in \cite{Fun92} with $\zeta$ a noise white in time and coloured in space. The construction for space--time white noise is carried out in \cite{BGHZ} and announced in \cite{proc}. 

 Even in this general context, the theory of regularity structures can be applied to give a proper family of solutions with the same freedom in the choice of the underlying parameters. Nevertheless, the same fundamental symmetries, the chain rule and the Itô isometry symmetry, can be formulated in this higher-dimensional setting (the latter involving the quadratic form $\sigma^{\!T}\sigma$) and a detailed description of the degrees of freedom for this equation has been provided. Below we summarize the actual state of the art on the knowledge of these symmetries, relating it with our  paper.

\begin{center}
	\begin{tabular}{|c | c | c|c|c|}
\hline	& 	\multicolumn{2}{c|}{Space--time white noise} & \multicolumn{2}{c|}{Full subcritical regime} 
		\\
		\hline  & \text{Chain rule}  & \text{It\^o isometry} &  \text{Chain rule} & \text{It\^o isometry}
		\\
		\hline
		\text{Dimension one} & \text{This work} &  \text{This work} & \text{This work} & \text{This work} \\
		\hline
		\text{High dimension} & \cite{BGHZ} & \cite{BGHZ} & \cite{BD24} & \text{Open}
		\\
		\hline
	\end{tabular}
\end{center}
As can be seen immediately, the description of the symmetries in \cite{BGHZ} excludes the case $d = 1$, $m = 1$ (see \cite[Remark 1.9]{BGHZ}), which is precisely the case we treat here. Instead, their results hold in what we refer to as the high‑dimensional regime, i.e., when $d$ and $m$ are sufficiently large. In that regime, one can indeed apply a general injectivity result for a suitably extended evaluation map introduced in Definition \ref{def:ev_map}, see \cite[Theorem 5.25, 5,31]{BGHZ}, \cite[Remark 4.8]{BD24}. Let us also mention that in \cite{BGHZ}, by combining the two symmetries, one obtains uniqueness of solutions in a specific geometric setting. In general, however, one does not expect to recover a unique choice of solutions by combining these two symmetries alone. The work \cite{BGHZ} provides explicit combinatorial conditions for both symmetries. The proofs there for computing the vector space of solutions associated with these symmetries are carried out by hand in the space–time white noise setting and do not extend to the full subcritical regime.

In \cite{BD24}, using advanced algebraic tools such as operad theory and homological algebra, one can interpret the key map for the chain rule symmetry as a derivation in a homological complex. Its kernel—which characterizes this symmetry—can then be computed. This structure can be understood as a deformation of the so-called operadic twisting. The Itô isometry, however, remains an open problem in the high‑dimensional context. The results from \cite{BGHZ,BD24} can be applied in one dimension, but they lead to an over‑parametrization of the equation due to the loss of injectivity in the coefficients.

The study of the chain rule in one dimension was initiated in \cite{Bru}, where its dimension is derived formally for space–time white noise. We also mention \cite{wong}, which recovers the Itô product for the multiplicative stochastic heat equation, and \cite{Bel20}, where an Itô formula with KPZ‑type nonlinearities is derived for the additive stochastic heat equation.

It turns out that to preserve the crucial property of injectivity, one must replace decorated trees with a new combinatorial set called \emph{multi‑indices}. Multi‑indices were introduced in the context of singular SPDEs in \cite{OSSW} for studying quasilinear equations. Since then, works involving multi‑indices have appeared both on the analytical/probabilistic and algebraic sides \cite{LOT,LOTT,BK23,JZ,BL23,BD23,Li23,GT,GMZ24,BH24}. For comprehensive surveys on multi‑indices, we refer the interested reader to \cite{LO23,OST,BOT24}.

Multi‑indices have appeared in various forms in earlier literature. In \cite{DL}, the authors characterize the free Novikov algebra using multi‑indices. In numerical analysis, the authors of \cite{MV16} used composition maps to characterize affine‑equivariant methods; these maps are connected to multi‑indices (see also \cite{BEFH24}, where multi‑index $B$-series are introduced and this connection is made explicit).

For intermediate dimensions (between one and high), no combinatorial set is known that preserves injectivity while retaining the nice algebraic properties that multi‑indices and decorated trees offer for constructing solutions to singular SPDEs. Therefore, we omit this case in the summary of results that follows. We can now state our main theorem, which employs multi‑indices for the generalized KPZ equation.

\begin{theorem}
	\label{thm:main renormalisation_intro}
Let $u_0$ be an initial datum belonging to the H\"older space $\CC^r(\mathbb{T})$ for some $r>0$, and let $\xi$ be an admissible noise that is spatially symmetric (see Definition \ref{defn_admiss_noise} and Definition \ref{symmetric_noise}). For a given mollifier $\rho$ in the class $\Moll$ (see Section \ref{section_2.2}), there exists a finite combinatorial set $\mathfrak{B}_{\xi}$ (given in Definition~\ref{def_B_xi}), whose dimension as a vector space can be computed, and a finite collection of renormalisation constants $C_{\eps}(v)$ (depending also on the noise $\xi$, $\rho$, and $\eps>0$) such that the following holds. Consider the renormalised equation
	\begin{equs}[eq:renorm nonlocal intro2]
		\d_t u_{\eps} & = \d_x^2 u_{\eps} + \Gamma(u_{\eps})\,(\d_x u_{\eps})^2
		+ g(u_{\eps})\,\d_x u_{\eps}
		+ h(u_{\eps}) + \sigma(u_{\eps})\, \xi_{\eps}\; \\ & \qquad + \sum_{v \in  \mathfrak{B}_{\xi}} C_{\eps}(v)  \Upsilon_{\Gamma,\sigma}(v)(u_{\eps})\,,
	\end{equs}
	where $\xi_{\eps}$ denotes the regularisation of $\xi$ by $\rho$ with some parameter $\eps>0$, and the terms $ \Upsilon_{\Gamma,\sigma}(v)(u_{\eps}) $ are computed using $\sigma$ together with the covariant derivative $\nabla_X Y$ defined for $X,Y \in \mathcal{C}^{\infty}(\mathbb{R},\mathbb{R})$ by  
	\begin{equ}\label{eq:def_cov_der}
		\nabla_X Y (u) = X(u)\,\partial_u Y(u) + \Gamma(u) \,X(u) \,Y(u)\;.
	\end{equ}
	Then one has the following properties:
\begin{enumerate}
\item  The family $\{u_{\eps}\}_{\eps>0}$ converges in probability, locally in time, as $\eps\to 0$ to a nontrivial limit $u$ that is independent of the mollifier $\rho$.
\item The limiting process $u$ transforms according to the chain rule under composition with diffeomorphisms.
\item When $\xi$ is a stationary Gaussian noise, the law of the limit solution depends on function $\sigma$ only through $\sigma^2$.
\end{enumerate}	  
\end{theorem}
\begin{remark}
 Unlike in \cite{BD24}, where the dimension of the linear space generated by $\mathfrak B_{\xi}$ lacks a direct expression, in our case Theorem~\ref{dim_geo} allows us to relate it immediately to the dimension of the homogeneous elements of the graded vector space $\mathcal M_{\<generic>}$ defined in~\eqref{free_nov_1}. As shown in \cite{DL} and more generally in \cite{BD23}, this vector space is isomorphic to the free Novikov algebra on one generator, for which explicit generating series are known; see \cite[Thm.~7.8]{DL}.

This distinction between  decorated trees and multi-indices has a clear algebraic explanation. In one dimension, the covariant derivative in \eqref{eq:def_cov_der} consists of two terms with a clear algebraic interpretation: the term $X \triangleright Y = X(u),\partial_u Y(u)$, which yields a Novikov algebra, plus a perturbation $\Gamma(u) X(u), Y(u)$. This perturbation can also be described within the Novikov framework that is, inside a vector space of multi-indices as detailed in the definition of the covariant derivative in \eqref{covariant_derivative}.

In the context of equation \eqref{high_dimension_equation}, however, the situation is markedly different. Here, the covariant derivative is given by
\begin{equs}
(\nabla_X Y)^{\alpha}(u)
= X^{\beta}(u),\partial_{u^{\beta}} Y^{\alpha}(u)
+ \Gamma^{\alpha}_{\beta\gamma}(u),X^{\beta}(u),Y^{\gamma}(u),
\end{equs}
where $\alpha,\beta,\gamma \in {1,\dots,m}$. The first term produces the free pre-Lie algebra for sufficiently large $d$ and $m$ (see \cite{ChaLiv} for a characterization of this free pre-Lie structure), but the second term breaks this pre-Lie structure. This observation suggests that the multi-index formalism, which is natural for the one-dimensional case, could lead to deeper insights into the structure of Novikov algebras and their geometric realisations.
\end{remark}
\begin{remark}
As there is currently no analytic solution theory for the equation \eqref{eq:gen_KPZ} formulated in terms of multi-indices, the proof of Theorem~\ref{thm:main renormalisation_intro} must rely on most of the arguments developed in \cite{BGHZ}. In particular, one first passes through decorated trees in order to exploit the appropriate analytic properties of the solution map.
Recently, the authors of \cite{Broux25} proposed a complete solution theory for the $\Phi^4_3$ equation formulated using multi-indices. We expect that a similar solution theory for the gKPZ equation in a purely multi-index setting should also hold, and would allow for a full, tree-free proof of the results in this paper.
\end{remark}

\begin{remark}
Following the results in \cite{BGN24}, our results could be easily adapted to have a version of Theorem \ref{thm:main renormalisation_intro} for the quasilinear generalised KPZ equation
\begin{equs} \label{eq:gen_KPZ_quasi}
\partial_t u  =a(u)\partial_x^{2} u  + \Gamma(u) (\partial_x u)^{2} +g(u)\partial_x u + h(u)+  \sigma(u) \zeta\,,
\end{equs}
with $\zeta$ space-time white noise. One can use the solution theory for decorated trees provided therein and proceed similarly to the proof of Theorem~\ref{thm:main renormalisation_intro}. This leads to a  parametrisation of the renormalised equations with less combinatorial terms. Moreover, one could also adapt some of their arguments using the chain rule together with \cite[Assumption 2]{BGN24} to study  global well posedness of $u$ with a generic subcritical noise.
\end{remark}

\begin{remark}  We expect that the computation of the geometric counter-terms works for a more general class of subcritical equations, as defined in \cite[Sec. 2]{BCCH}. In our  context we could consider a scalar equation $u\colon \mathbb{R}\times \mathbb{T}^d\to \mathbb{R}$  given by
	\begin{equs}
\partial_t u - Lu =		\sum_{\beta_1,..., \beta_n \in \mathcal{O_-}}\Gamma_{\beta_1,...,\beta_n}(\partial^{\alpha} u \colon \alpha \in \mathcal{O_+}) \prod_{i=1}^n\partial^{\beta_i}u + \sigma(\partial^{\alpha} u \colon \alpha \in \mathcal{O_+})\xi\,,
		\end{equs}
		where $ \Gamma_{\beta_1,...,\beta_n} $ and $\sigma$ are smooth coefficients and $  \mathcal{O_+}$, $  \mathcal{O_-}$  are finite subsets of $ \mathbb{N}^{d+1} $ with $\partial^{\alpha}$ the corresponding derivatives in space. Here, the noise $\xi$ and the operator $L$ have to be chosen such that $\partial^{\alpha} u $ (resp. $\partial^{\beta} u $) are functions (resp. distribution) when $\alpha \in \mathcal{O}_+$ (resp. $\beta \in \mathcal{O}_-$ ). Under the general assumption of subcriticality for the products $ \prod_{i=1}^n\partial^{\beta_i} u $, see \cite[Def. 2.3]{BCCH}, there exists a general solution theory for $u$ and we can ask ourselves how to recover the geometric symmetries of the solution. 
		
In this general case, one has to work with more general multi-indices that encode the various products appearing in the equation. In particular, we expect most of the proofs of Section~\ref{Sec::3} to carry over, provided that the non-linearities admit a suitable extension to higher-order tensors. For the upper bound in Proposition~\ref{upper_bound_dimension}, one has to replace the free Novikov algebra with one generator by the free multi-Novikov algebra with one generator introduced in \cite{BD23}. This corresponds to working with several commuting derivatives, due to the fact that $\sigma$ and $\Gamma_{\beta_1,\ldots,\beta_n}$ depend on multiple variables.

Concerning the lower bound in Proposition~\ref{lower_bound_dimension}, the same reasoning should apply if the covariant derivative defined in \eqref{covariant_derivative} induces a subspace of geometric elements, which does not seem to be clear at present. In general, finding a generating set for the space of geometric elements appears to be a very challenging problem.  %In this work, we have decided to present the most natural geometric equation which is the generalised KPZ equation. This is a very important equation connected to the Brownian loop measure and the most natural extension of the KPZ equation. It contains important SPDEs such as stochastic multiplicative equations. It is also one of the most general quasilinear equation one can think of. This is the reason why we have decided to write the main statements in this context where the covariant derivatives can be used for describing the space of geometric counter-terms.
\end{remark}

\subsection{Outline of the paper} 

Let us summarise the content of this paper.
In Section~\ref{Sec::2}, we introduce the multi-indices for solving the equation \eqref{eq:gen_KPZ}. We follow the formalism introduced in \cite{BL23}. It is formed of more general multi-indices than in \cite{OSSW,LOT}. The main new contribution of this section is the introduction of convenient notations that will be used throughout the rest of the paper to characterise the symmetries. The multi-indices are viewed as polynomials in the variables $ z_{ (\<generic>,k)}$ and $z_{( \<derivative2>,k)}$ for $k \in \mathbb{N}$. The first variable is associated with the product $ \sigma(u_{\eps}) \xi_{\eps}$ and the second one with $ \Gamma(u_{\eps}) (\partial_x u_{\eps})^2 $. They are also associated with some coefficients described below:
\begin{equs}
	z_{ (\<generic>,k)} \leftrightarrow \sigma^{(k)}(u_{\eps})\,, \quad  
	z_{( \<derivative2>,k)} \leftrightarrow 2 \Gamma^{(k)}(u_{\eps})\,.
\end{equs}
The coefficient $2$ in front of $\Gamma^{(k)}(u_{\eps})$ comes from the exponent of $ (\partial_x u_{\eps})^2 $.
We recall in \eqref{def_Psi} the definition of the map $ \Psi $ that allows us to associate a decorated tree with a multi-index. Then, we introduce the BPHZ model over general noise and reduced multi-indices. The model is given in \eqref{renormalised_recursion_equation}. After putting the correct assumptions on the noise $\xi$ in Definition~\ref{defn_admiss_noise}, we recall the BPHZ choice of the renormalisation constants in Theorem~\ref{BPHZ_theorem}. We consider symmetric noise given in Definition~\ref{symmetric_noise} and in Proposition~\ref{symmetric_ex}, we show that noises given by Definition~\ref{defn_admiss_noise} are actually symmetric. We then prove in Proposition~\ref{prop:reduction} that some BPHZ constants are equal to zero which allows us to work with a reduced set of multi-indices in Definition~\ref{reduced_multi}. We finish the section by defining the map $ \Upsilon_{\Gamma,\sigma} $ on the reduced multi-indices (see Definition~\ref{def_Uspilon}).

 In Section~\ref{Sec::3}, we start by defining precisely the symmetries considered in this work, which are the chain rule, the Itô isometry, and an additional symmetry (the "nice" symmetry) introduced in Definition~\ref{def_symmetries_nice}. 

We begin with the chain rule, which is considered via infinitesimal changes of coordinates. This requires the introduction of new variables $ z_{(\begin{tikzpicture}[scale=0.2,baseline=-2]
\coordinate (root) at (0,0);
\node[diff] (rootnode) at (root) {};
\end{tikzpicture},k)} $ associated with $h^{(k)}$. Here, $h$ is to be understood as the first-order expansion of a family of diffeomorphisms $(\psi_t)_{t \geq 0}$ satisfying $ \psi_t = \mathrm{id} + t h + o(t)$. We introduce then an important map $\hat{\phi}\geo$ given in Definition~\ref{def:injectivity_Upsilon} whose kernel characterises the Chain rule; see Theorem~\ref{geo_chain_rule}. The proof of this theorem crucially relies on Theorem~\ref{injectivity_Upsilon}, which shows the injectivity of $ \Upsilon^h_{\Gamma, \sigma} $. The map $ \Upsilon^h_{\Gamma, \sigma} $ is the extension of $ \Upsilon_{\Gamma, \sigma} $ to multi-indices containing variables of the form $z_{(\begin{tikzpicture}[scale=0.2,baseline=-2]
\coordinate (root) at (0,0);
\node[diff] (rootnode) at (root) {};
\end{tikzpicture},k)}$.

We then identify a natural subspace of geometric counterterms via Definition~\ref{def_B_xi}, given by iteration of covariant derivatives defined in \eqref{covariant_derivative}. The rest of the paper is then dedicated to showing that this subspace is the space of geometric counterterms and to computing its dimension, both given by Theorem~\ref{dim_geo}. In Proposition~\ref{lower_bound_dimension}, we establish a lower bound on the dimension. Example~\ref{example_1} illustrates this bound on multi-indices coming from space-time white noise. The upper bound is proved in Proposition~\ref{upper_bound_dimension} via the construction of a triangular system. This system is written explicitly in Example~\ref{example_2} with four variables $z_{(\<generic>,k)}$, again in the case where the noise is Gaussian and has the same regularity properties as space-time white noise. 

We then turn to the Itô isometry and show in Proposition~\ref{thm_Ito} that it cannot discriminate between solutions, as most counterterms have an even number of variables $z_{ (\<generic>,k)}$. Finally, we describe the properties of the  "nice"  symmetry in Proposition~\ref{thm_nice}.

Then, in the last section~\ref{sec:4}, we provide a proof of Theorem~\ref{thm:main renormalisation_intro}. This final section gathers the algebraic ingredients developed in the previous sections and combines them to establish the symmetries of the solution. 

Most of the ideas of the proofs are coming from \cite{BGHZ}, but due to the use of multi-indices, one can observe major simplifications. The most critical part is the upper bound and trying to prove Proposition~\ref{upper_bound_dimension} using decorated trees is likely intractable. This shows the need for more advanced algebra.

\subsection{Further consequences}

Multi-indices have been introduced to simplify the construction with decorated trees for singular SPDEs and propose a more natural expansion in dimension one. So far, most of the (new) proofs/constructions concerning multi-indices have their equivalent at the level of decorated trees and one cannot argue clearly that one is simpler than the other.
\begin{itemize}
	\setlength\itemsep{0.5em}
	\item The Hopf algebras at the core of Regularity Structures have been understood in \cite{reg,BHZ}. The equivalent construction could be found in \cite{LOT} but without the renormalisation via Hopf algebras which is better understood in \cite{Li23} in the context of Rough Paths. These Hopf algebras have been understood in \cite{BM23, BK23} as originating from the same type of post-Lie product. 
	\item The proof of the renormalised equation  in \cite{BCCH} has been largely simplified in \cite{BB21b} thanks to preparation maps introduced in \cite{BR18}. A different approach is advocated in \cite{LOTT}: The top-down approach which is to find first the correct ansatz for the equation that will produce renormalised multi-indices. In \cite{BCCH}, it is a bottom-up approach that has been chosen: one first renormalises divergent stochastic iterated integrals and then sees how this transformation acts on the equation. In \cite{BL23} where the renormalised equation is computed for a large class of equations via multi-indices, the renormalisation map is close in spirit to a preparation map.
	\item In \cite{LOTT}, the authors proposed a diagram-free approach to convergence based on a spectral gap inequality and the use of the Reconstruction Theorem. This multi-index–based proof was later extended and reformulated in the framework of decorated trees in \cite{BN24,HS,BH23}. So far, these proofs do not apply in a non-translation-invariant setting. The approach introduced in \cite{BB23}, which combines spectral gap estimates with diagrammatic techniques, provides an alternative way to address this problem.
\end{itemize}

In the present work, one can notice substantial simplifications due to multi-indices. In Definition~\ref{def_Uspilon}, the map $ \Upsilon_{\Gamma, \sigma} $ is given as a polynomial map. The proof of its injectivity in Theorem~\ref{injectivity_Upsilon} is immediate.  The key map  $\phi_\geo $ is defined without any algebra: in \cite{BGHZ}, it was given as a $T$-algebra infinitesimal morphism and in \cite{BD24} it respects the operadic structure. The most dramatic simplification is in Proposition~\ref{upper_bound_dimension} for the proof of the upper bound on the dimension of geometric elements.

In the end, we can provide an elementary self-contained proof of the open problem left in  \cite{BD24} which looks at first sight quite complicated to establish.
 Indeed, the chain rule for multi-indices was left open in the first arxiv version of \cite{BD24} and at that time the authors were thinking that it was a difficult problem more challenging than the decorated trees case. The elementary proof of the present paper solves this open problem.

 The homological method of \cite{BD24} relies on knowing the homology of the operadic twisting of the operad of pre-Lie algebras. For the operad of Novikov algebras, the homology of operadic twisting is not known and seems more difficult than the one for decorated trees. The main result of this work could give insight into this open problem.

As mentioned before, one does not have a complete solution theory for multi-indices. However, once it is completed, one can use the chain rule to get important results. Indeed, the approach for solving quasi-linear SPDEs in \cite{MH,Mate19,BGN24} depends on the chain rule for getting local counter-terms in the solution. Also, in \cite{BGN24}, the chain rule is the key to obtaining long-time existence results. Such results could be obtained in the context of multi-indices.

\subsection*{Acknowledgements}

{\small
	C.B. and Y.B. gratefully acknowledge funding support from the European Research Council (ERC) through the ERC Starting Grant Low Regularity Dynamics via Decorated Trees (LoRDeT), grant agreement No.\ 101075208. The authors thank V. Dotsenko for identifying an error in a previous version and are grateful to the referee for insightful comments. The authors thank the anonymous referees for their remarks that improve significantly the paper.  Views and opinions expressed are however those of the author(s) only and do not necessarily reflect those of the European Union or the European Research Council. Neither the European Union nor the granting authority can be held responsible for them.
}

\section{Tracking the solution with multi-indices}
\label{Sec::2}

To analyse rigorously the symmetries of \eqref{eq:gen_KPZ}, we consider its multi-index expansion  via the notion of generalised multi-index, as explained in \cite{BL23} and we put it in relation with the usual tree expansion of \eqref{eq:gen_KPZ}, given in \cite{BGHZ}, which is obtained by applying the usual results contained in \cite{reg,BHZ,CH,BCCH}.

\subsection{Multi-indices and trees associated to the gKPZ equation} 

The main idea is to rewrite \eqref{eq:gen_KPZ} as
\begin{align}\label{eq:gen_KPZ2}
\partial_t u  =\partial_x^{2} u +a_0(u, \partial_xu) + a_{\Xi}(u) \xi,
\end{align}
and then formally expand each non-linearity into a Taylor polynomial in the generalised variables, i.e.
\[
a_0(u, \partial_xu) = \sum_{k=(k_1,k_2)\in \mathbb{N}^2} \alpha_{0, k}u^{k_1}(\partial_x u)^{k_2}\,,
\qquad 
a_{\Xi}(u) = \sum_{l\in\mathbb{N}} \alpha_{\Xi,l}u^{l},
\]
for some coefficients $\alpha_{0, k}$, $\alpha_{\Xi,l}$, where $k\in\mathbb{N}^2$ and $l\in\mathbb{N}$.

Because these expansions involve constraints when expressed in the variables $z$ (in particular $k_2 \leq 2$), it is convenient to encode them in the combinatorial framework of generalised multi-indices. 
In general, for a countable set $I$, a \textbf{multi-index over $I$} is a map $m \colon I \to \mathbb{N}$ such that $m(i) = 0$ for all but finitely many $i \in I$. The set of multi-indices over $I$ is denoted by $M(I)$.

To describe \eqref{eq:gen_KPZ2} systematically we will follow the approach developed in \cite[Example 2.5]{BL23}. We start from the finite label set $\mathcal{L}= \{0,\Xi\}$. To each label $\mathfrak{l}\in\mathcal{L}$ we associate a countable subset $\mathcal{N}_{\mathfrak{l}} \subset M(\mathbb{N}^2)$ that encodes all possible non-linearities acting on $u$ and its derivatives. This is done by identifying an element $(k_1,k_2)\in\mathbb{N}^2$ with the corresponding space-time derivative $\partial_t^{k_1}\partial_x^{k_2}u$. Therefore for equation \eqref{eq:gen_KPZ2} we set
\begin{equs}\label{eq:explicit_sets}
 \mathcal{N}_{0} &= \bigl\{ k_1 e_{(0,0)} + k_2 e_{(0,1)} \colon k_1 \in \mathbb{N},\; k_2 \in \{0,1, 2\}\bigr\},\\
\mathcal{N}_{\Xi} &= \bigl\{ l e_{(0,0)} \colon l \in \mathbb{N}\bigr\},
\end{equs}
where $e_{\mathbf{n}} \colon \mathbb{N}^2\to \mathbb{N}$ denotes the indicator function of the singleton $\{\mathbf{n}\}$ (thus, for instance, the dependence on $(\partial_x u)^{k_2}$ is described by $k_2 e_{(0,1)}$).

Finally, we combine these non-linearities with the usual  polynomials  in space-time variables $t^{n_1}x^{n_2}$ for $\mathbf{n}=(n_1,n_2)$ (which, via a formal Taylor expansion, can represent all smooth functions $f\colon \mathbb{R}^2\to \mathbb{R}$), to obtain the set of decorations
\[
\mathcal{N} = \mathcal{N}_{0} \sqcup \mathcal{N}_{\Xi} \subset \mathcal{L} \times M(\mathbb{N}^2), \qquad 
\mathcal{R} = \mathcal{N} \sqcup \mathbb{N}^2,
\]
and define $\mathfrak{M} = M(\mathcal{R})$. Each element $\beta\in \mathfrak{M}$ will be in a one-to-one correspondence with  the canonical basis of the polynomial algebra
\[\mathcal{M}=\mathbb{R}[z_{k}\,, z_{\mathbf{n}}]_{k\in  \mathcal{N}, \,\mathbf{n}\in \mathbb{N}^2} \]
via the classical identification
\[\beta \to  z^{\beta}= \prod_{k\in  \mathcal{N}, \,\mathbf{n}\in \mathbb{N}^2} z_{k}^{\beta(k)} z_{\mathbf{n}}^{\beta(\mathbf{n})}\,,
\]
which we will use all along the paper. Usually the  variables associated to $\mathcal{N}_{\Xi}$ and  $\mathcal{N}_0$ have  the notation $z_{(\Xi, le_{(0,0)})}$, $z_{(0,k_1 e_{(0,0)} + k_2 e_{(0,1)})} $ but in what follows we drastically simplify this notation by using the following conventions:
\begin{equs}\label{eq:convention}
	z_{(\Xi, le_{(0,0)})}=z_{ (\<generic>,l)}\,, \quad z_{(0,ke_{(0,0)}+ 2 e_{(0,1)})}=z_{( \<derivative2>,k)}\,, \quad z_{(0,ke_{(0,0)}+  e_{(0,1)})}=z_{( \<derivative>,k)}\,.
\end{equs}

Each multi-index comes with several quantities associated to it, each one with a key role to describe the solution \eqref{eq:gen_KPZ2}: a 
 \textbf{noise  factor}  depending only on the values of $\beta$ over $\mathcal{N}_{\Xi}$
\[((\beta))=\sum_{k\in \mathcal{N}_{\Xi}}\beta(k)\,,\]
a \textbf{fertility factor} 
\[[\beta]=\sum_{k\in \mathcal{N}} (1- \ell(k))\beta(k)+\sum_{\mbn \in \mathbb{N}^2}\beta(\mbn)\,,\] 
where $\ell(k)=\sum_{u\in \mathcal{R}}k(u)$ is the length of an element in $M(\mathbb{N}^2)$. Finally, given the regularity values $\alpha_{\Xi}= -2+ \delta  $ with $0<\delta<1$ and $\alpha_{0}=0$, we consider the \textbf{homogeneity factor}
\[|\beta|=\sum_{(l,k)\in \mathcal{N}}\left(\alpha_l +\sum_{\mathbf{n} \in \mathbb{N}^2} (2- \Vert\mathbf{n}\Vert )k(\mathbf{n})\beta(k)\right)+ \sum_{\mathbf{n}}(\Vert\mathbf{n}\Vert-2)\beta(\mbn)\,,\] 
where we put the parabolic degree $\Vert \mathbf{n}\Vert= 2n_1+ n_2$. Following \cite[Def. 2.22]{BL23},  the following  set of multi-indices
\[T= \{\beta\in \mathfrak{M}\colon [\beta]=1, \quad ((\beta)) >0 \}\]
and the associated vector space $\mathcal{T}=\langle T\rangle$ contain the abstract symbols to describe the equation \eqref{eq:gen_KPZ2}. A key role for renormalisation is played by $T_-$, the elements of $T$ satisfying $|\beta|<0$. Indeed we can easily apply \cite[Lem. 2.23]{BL23} directly to this context and deduce that for any $0<\delta<1$ the set $T_-$ is finite (with a cardinality depending on $\delta$). In order to understand what type of multi-indices are present we list in Table~\ref{tab:kpz} the negative multi-indices when $\delta=\frac{1}{2}-\kappa$ for $\kappa>0$ sufficiently  small. This choice of the parameter $\delta$ is directly given when $\xi$ is the space-time white noise, see also \cite[Table 1]{BL23} (the notation $\gamma^-$ stands for expressions of the form $\gamma- m \kappa$ with $m\in \mathbb{N}$). Almost all elements of $T_-$ can be expressed using the conventions \eqref{eq:convention} with the exception of $z_{(0,1)}$, which represents the polynomial $x$. We denote the linear span of  $T_-$ by $\mathcal{T}_-$.

\begin{table}[ht]
\begin{tabular}{c | c | c}
			$|\beta|$ & $\beta$ & $\#\{\beta\}$ \\
			\hline
			&&\\
			$(-3/2)^-$ & $ z_{ (\<generic>,0)}$ & 1 \\ & & \\
			 \hline
			&&\\
			$(-1)^-$ & $ z_{ (\<generic>,0)}z_{ (\<generic>,1)}$, $\quad z_{ (\<generic>,0)}^2 z_{(\<derivative2>,0)}$ & 2 \\ & & \\
			\hline 
			 & & \\
			& $z_{ (\<generic>,0)}^2z_{ (\<generic>,2)}$, $ \quad z_{ (\<generic>,0)}z_{ (\<generic>,1)}^2$, $\quad z_{ (\<generic>,0)}z_{( \<derivative>,0)}$, $ \quad z_{ (\<generic>,1)}z_{(0,1)}$ &  \\
			$(-\frac{1}{2})^-$ &  $ z_{ (\<generic>,0)}^2z_{ (\<generic>,1)}z_{( \<derivative2>,0)}$, $\quad z_{ (\<generic>,0)}^3 z_{( \<derivative2>,0)}^2$,     $\quad z_{ (\<generic>,0)}^3z_{( \<derivative2>,1)}$& 7
	 \\ &&\\
			\hline
			 &&\\
			& $z_{ (\<generic>,0)}z_{ (\<generic>,1)}^3$, $\quad z_{ (\<generic>,0)}z_{ (\<generic>,1)}z_{( \<derivative>,0)}$, $\quad  z_{ (\<generic>,0)}^3z_{ (\<generic>,3)} $& \\
			& $\quad z_{ (\<generic>,0)}^2z_{ (\<generic>,1)}z_{ (\<generic>,2)}$, $\quad z_{ (\<generic>,0)}^2z_{ (\<generic>,1)}^2z_{(\<derivative2>,0)}$,  $\quad z_{ (\<generic>,0)}^2z_{( \<derivative>,0)}z_{( \<derivative2>,0)} $, & \\
			$ (0)^- $ & $z_{ (\<generic>,0)}^3z_{ (\<generic>,1)}z_{(\<derivative2>,0)}^2 $, $\quad z_{ (\<generic>,0)}^3z_{ (\<generic>,1)}z_{( \<derivative2>,0)}$, $\quad z_{ (\<generic>,0)}^3 z_{ (\<generic>,2)}z_{( \<derivative2>,0)} $ & 17\\
			&$ z_{ (\<generic>,0)}^2z_{( \<derivative>,1)}$, $ \quad z_{ (\<generic>,0)}^4 z_{( \<derivative2>,0)}^3$, $\quad z_{ (\<generic>,0)}^4 z_{(\<derivative2>,2)} $,& \\ & $ z_{ (\<generic>,0)}^4z_{( \<derivative2>,0)}z_{(\<derivative2>,1)} $,    $\quad z_{ (\<generic>,0)} z_{ (\<generic>,2)}z_{(0,1)}, $ 
			 $\quad z_{ (\<generic>,1)}^2z_{(0,1)}$ & \\ & $ z_{ (\<generic>,0)}^2z_{( \<derivative2>,1)}z_{(0,1)}$, $\quad z_{ (\<generic>,0)}z_{ (\<generic>,1)}z_{( \<derivative2>,0)}z_{(0,1)} $ & \\ &&\\
			\hline 
		\end{tabular}
		\caption{All the elements of $T_-$ when  $\delta=(1/2)^-$}
		\label{tab:kpz}
		\end{table}

The elements of $T$ can be easily linked with a  class of trees introduced in \cite{BGHZ} to consider a more general version of \eqref{eq:gen_KPZ}. We will refer to \cite[Sec. 4]{BHZ}, \cite[Sec. 4]{BCCH} for usual references to decorated trees. 

Starting from the same label set $\mathcal{L}$, we consider the set of decorated trees whose  vertices are decorated by the set $\mathcal{L}\sqcup  \{ X^{\mathbf{n}}\}_{\mathbf{n}\in \mathbb{N}^2}$ (the symbol $X^{\mathbf{n}}$ stands for the monomial  $X^{\mathbf{n}}=X^{n_1}_1X^{n_2}_2$), and whose edges are decorated by  the two possible decorations $\{(0,0),(0,1)\}$. We  denote by $\mathfrak{T}$ the set of trees of this form and by $\langle\mathfrak{T}\rangle$ the vector space generated by $\mathfrak{T}$. Using the standard operation of tree  product $\tau \cdot \sigma$ among the trees $\tau, \sigma $ and for any $\mathbf{m}\in \mathbb{N}^2$ the symbol $\mathcal{I}_{\mathbf{m}}(\tau)$  for the operation of creating a new tree with an extra edge  decorated with $\mathbf{m}$ pointing to the root which is decorated with $\{0\}$, we can recursively construct  $\mathfrak{T}$ starting from $ \mathcal{L}\sqcup \{ X^{\mathbf{n}}\}_{\mathbf{n}\in \mathbb{N}^2}$ and setting 
\begin{equation}\label{eq:recursive_tree}
\tau=\sigma\cdot X^{\mathbf{n}}\cdot \mathcal{I}_{\mathbf{m}_1}(\tau_1)\cdots  \mathcal{I}_{\mathbf{m}_N}(\tau_N)
\end{equation}
with $\sigma\in \mathcal{L}$, $N\geq 0$, $\tau_i\in \mathfrak{T}$ and  $\mathbf{m}_i\in  \{(0,0),(0,1)\}$.

Following \cite[Sec. 3]{BK23}, the explicit connection between multi-indices and trees is done by an explicit linear map $\Psi\colon \mathcal{B} \to \mathcal{M}$. We define it as the unique linear map satisfying 
  $\Psi(X^{\mathbf{n}})=z_{\mathbf{n}}$, $\Psi(\Xi)= z_{ (\<generic>,0)}$, $\Psi(0)= z_{(0,\mathbf{0})}$, and setting recursively on any $\tau$ of the form \eqref{eq:recursive_tree}
\begin{equation}\label{def_Psi}
 \Psi(\tau)=z_{(\sigma, k)} X^{\mathbf{n}}\prod_{i=1}^N\Psi(\tau_i)\, ,  
 \end{equation}
with $k= \sum_{i=1}^N \mathbf{m}_i+ |\mathbf{n}|e_{\mathbf{0}}$ and $z_{(\sigma, k)}$ is the associated monomial in the  variables related to $\mathcal{N}$. For instance, using the shorthand notation $\mathcal{I}_1=\mathcal{I}_{(0,1)}$ we have the identities
\[\Psi( \mathcal{I}_1 (\mathcal{I}_1 (\Xi))^2 \cdot \mathcal{I}_1 (\mathcal{I}_1 (\Xi))^2)=\Psi( \mathcal{I}_1 (\Xi) \cdot \mathcal{I}_1 (\mathcal{I}_1 (\Xi) \cdot \mathcal{I}_1 (\mathcal{I}_1 (\Xi)^2)=z_{ (\<generic>,0)}^4 z_{( \<derivative2>,0)}^3\,.\]
From this we immediately see that $\Psi$ is not injective.

 Concerning the surjectivity of $\Psi$,  the condition $[\beta]=1$ in $T$ ensures the existence of a tree with $\ell(\beta)$ nodes and $ \sum_{k \in \mathcal{N}}\ell(k)\beta(k)$ edges where the nodes and edges are decorated according to the variables in $\beta$. In particular, in absence of space time variables (i.e. $\beta(\mathbf{n})=0$ for all $\mathbf{n}\in \mathbb{N}^2$) the integer values in the variables $z_{ (\sigma,m)}$, with $\sigma\in\{ \<generic>\,, \<derivative> , \<derivative2> \}$ denotes the number of incoming edges of the type $\mathcal{I}$ (i.e. the fertility) to nodes of type $\Xi$ and edges of form $\mathcal{I}_1$ and $\mathcal{I}_1\cdot \mathcal{I}_1$ respectively, otherwise one needs to add to the nodes polynomials $X^{\mathbf{n}}$ to match the  fertility parameters.  For example, using the symbols $\{ \<generic>\,, \<derivative> , \<derivative2> \}$ for the node  $\Xi$ and edges of form $\mathcal{I}_1$ and $\mathcal{I}_1\cdot \mathcal{I}_1$ the previous example becomes
\[\Psi(\<Xitwo>  )=\Psi(  \<2I1Xi4>)=z_{ (\<generic>,0)}^4 z_{( \<derivative2>,0)}^3\,.\]
This explicit map will play a fundamental role in adapting the techniques on trees to multi-indices in Section~\ref{Sec::3}.

\subsection{BPHZ model over general noise and reduced multi-indices}
\label{section_2.2}

Once a description of the combinatorial objects associated with the generalized KPZ equation is given, we explain how to construct the associated BPHZ model over multi-indices, in the same way as in \cite{LOTT}, using the formalism introduced in \cite{BL23}. This construction provides a consistent way to attach at each $\beta\in T$ and $x\in \mathbb{R}^2$ a proper distributional random field  $x\mapsto\Pi_{x\beta}\in \mathcal{D}'(\mathbb{R}^2)$ describing some non linear functionals of the underlying noise $\xi$, which will encode the solution. This choice will also drastically reduce the underlying vector space of symmetries where we can focus our analysis but still it does not trivialise it. Since the association $\beta \mapsto\Pi_{x\beta}$ will be linear in $\beta$ we work naturally with the dual spaces $\mathcal{T}^*$ and $\mathcal{T}_-^*$, denoting the variables of the dual space with monomials of the form $z^{\beta}_*$ and the canonical pairing  by $ \langle z^{\beta_1}_* , z^{\beta_2} \rangle=\delta_{\beta_1\beta_2}$ . We note that by construction  $\mathcal{T}_-^*$ is finite dimensional but $\mathcal{T}^*$ is naturally identified to a space of formal series.  

To obtain a model similar to the one introduced in \cite{BGHZ}, we fix a decomposition of the heat kernel $P$ over the real line as $P = K + R$, where $K$ is compactly supported around the origin, integrates to zero against all polynomials up to parabolic degree $1$, is symmetric in the space variable (i.e., $K(t, x) = K(t, -x)$), and $R$ is globally smooth (see \cite[Lem. 5.5]{reg}).  
Moreover, we consider a function $\varrho : \mathbb{R}^2 \to \mathbb{R}$ belonging to the class $\mathcal{M}$, which consists of all smooth functions integrating to $1$, symmetric in the space variable (i.e., $\rho(t, x) = \rho(t, -x)$), and such that $\rho = 0$ for $t \leq 0$. Given these technical ingredients, we fix $\xi$, a generic random distribution over $\mathbb{R} \times \mathbb{T}$, and use it to introduce the mollified noise $\xi_{\varepsilon} = \varrho_{\varepsilon} * \xi$, where $*$ denotes convolution over $\mathbb{R}^2$, $\varrho_{\varepsilon} = \varepsilon^{-3} \varrho(\varepsilon^{-2} t, \varepsilon^{-1} x)$ is the parabolic rescaling of $\varrho$ preserving its $L^1$ norm, and $\xi$ is periodically extended over $\mathbb{R}^2$ (denoted by the same symbol).

The trajectories of $\xi_{\varepsilon}$ are almost surely smooth for any $\varepsilon > 0$. This property allows us to define smooth models $(\Pi, \Gamma)$ for $\mathcal{T}$, consisting of a map $x \mapsto \Pi^{\varepsilon}_x : \mathbb{R}^2 \to (\mathcal{T} \to \mathcal{C}^{\infty}(\mathbb{R}^2))$, and a map $\Gamma^{\varepsilon}_{xy} : \mathbb{R}^2 \times \mathbb{R}^2 \to (\mathcal{T} \to \mathcal{T})$, satisfying specific algebraic and analytic bounds between their components (see \cite[Def. 2.17]{reg}). For our purposes, it suffices to define a specific map $\Pi^{\varepsilon} : \mathcal{T} \to \mathcal{C}^{\infty}(\mathbb{R}^2)$, where the dependence on $x$ is suppressed. Once $\Pi^{\varepsilon}$ is fixed, it is a standard (though non-trivial) consequence of the theory of regularity structures and its algebraic framework to construct a model from $\Pi^{\varepsilon}$ (see, e.g., \cite[Remark 8.2]{reg}, \cite[Definition 6.8]{BHZ}).

To define $\Pi^{\varepsilon}$ we start by setting $\Pi_{z_{\mathbf{n}}}^{\varepsilon}(y)= y_1^{n_1}y_2^{n_2}$. Moreover, for any $\beta \in \mathcal{N}$, $\Pi^\varepsilon$ is then obtained through the coupled object
\[
(\Pi^\varepsilon, \Pi^{-,\varepsilon})\colon  \mathcal{T}^*\times \mathcal{T}_-^*\to \mathcal{C}^{\infty}(\mathbb{R}^2)\times \mathcal{C}^{\infty}(\mathbb{R}^2),
\]
defined recursively by
\begin{equation}\label{canonical_recursion_equation}
\begin{cases}
\Pi_{\beta}^\varepsilon &= K*\Pi_{\beta}^{-, \varepsilon} \,,\\[2mm]
\Pi_{\beta}^{-, \varepsilon} &= \displaystyle\sum_{(l,k)\in \mathcal{N}}\;
\sum_{\substack{z^{\beta}= z_{(l,k)} \prod_{\mathbf{n}}\prod_{i=1}^{k(\mathbf{n})}z^{\beta_i}}} 
\prod_{\mathbf{n}}\prod_{i=1}^{k(\mathbf{n})}\Bigl(\frac{1}{\mathbf{n}!}\partial^\mathbf{n}\Pi_{\beta_i}^{\varepsilon}\Bigr)\xi_l\,,
\end{cases}
\end{equation}
with $\xi_{\Xi}= \xi_{\varepsilon}$ and $\xi_{0}=1$. The condition $z^{\beta}= z_{(l,k)} \prod_{\mathbf{n}}\prod_{i=1}^{k(\mathbf{n})}z^{\beta_i}$ in \eqref{canonical_recursion_equation} may seem nontrivial, but the explicit structure of $\mathcal{T}$ allows a systematic construction of $\Pi_{\beta}$, which in turn uniquely determines $\Pi_{\beta}$.

This model and its recursion \eqref{canonical_recursion_equation} arise heuristically by considering the trivial regularisation of equation \eqref{eq:gen_KPZ} without renormalisation, that is, the random PDE
\[\d_t u_{\eps}  = \d_x^2 u_{\eps} + \Gamma(u_{\eps})\,(\d_x u_{\eps})^2
		+ g(u_{\eps})\,\d_x u_{\eps}
		+h(u_{\eps}) + \sigma(u_{\eps})\, \xi_{\eps}
		\]
whose limit is in general ill-defined. In order to take into account the actual renormalised equation \eqref{eq:renorm nonlocal intro2}, we need to modify it according to the specific choice of constants and non-linearities in the original equation.

The way we use to implement the constants is directly imported from \cite{BL23}. We fix  $c\in \mathcal{T}_-^*$  of the form
\[c= \sum_{\beta'\in \mathcal{T}_-} c_{\beta'}z^{\beta'}_*\] 
and we perturb $\Pi^{\varepsilon}_x$ to a functional $\Pi_{x}^{\varepsilon,c}$ in a consistent way to encode the new constants  at the level of the equation. This is in few words the main result in \cite[Thm. 4.1]{BL23}. Even in this case, it is possible to obtain $\Pi_{\beta}^{\varepsilon,c}$  via a more sophisticated recursive construction of the form 
\begin{equation}\label{renormalised_recursion_equation}
\begin{cases}
		\Pi_{\beta}^{\varepsilon,c} &= K*\Pi_{x\beta}^{-, \varepsilon, c}\, \\
		\Pi_{\beta}^{-, \varepsilon, c} &=\Pi_{\beta}^{-, \varepsilon}+  (\Gamma^{*, \varepsilon}_{\PPi}c)(\beta)
	\end{cases}
\end{equation}
where  $\Gamma^{*, \varepsilon}_{\PPi}\colon \mathcal{T}_-^*\to (\mathcal{T})^*$ is given by the sum
\begin{equation}\label{eq:defn_gamma}
(\Gamma^*_{\PPi}c)(\beta)=\sum_{\langle z^{\beta_1}_*\ldots z^{\beta_l}_*D^{(\mathbf{n}_1)}\ldots D^{(\mathbf{n}_l)}z^{\beta'}_* ,z^{\beta}\rangle\neq 0} \left(\frac{1}{l!}\prod_{i=0}^l\frac{1}{\mathbf{n_i}!}\partial^\mathbf{n_i}\Pi_{\beta_i}^{ \varepsilon, c}\right)c_{\beta'}
\end{equation}
with  the derivation maps $(D^{(\mathbf{n})})_{\mathbf{n}\in \mathbb{N}^2}$ explicitly given  by 
\[D^{(\mathbf{n})}= \sum_{(l,k)\in \mathcal{N}}(k(\mathbf{n})+1)(z_{(l,k+e_{\mathbf{n}})})_*\partial_{(z_{(l,k)})_*}+ \partial_{z_{^\mathbf{n}}^*}\,.\]
See \cite[Lem.~3.15, Ex.~3.29]{BL23} for further explanations of the identity \eqref{eq:defn_gamma}.

To fix the components of a generic element $c \in \mathcal{T}-^*$ so as to obtain convergence of $\Pi_{\beta}^{\varepsilon,c}$, we impose a recursive condition involving the stochastic properties of the underlying noise $\xi$. For this reason, we finally introduce a general class of admissible noises for which our results are valid. We recall that we write $\mathbb{E}[X_1 \cdots X_n]$ to denote the expectation of the product of $n$ random variables $X_1, \ldots, X_n$, and we use the notation $\mathbb{K}[X_1, \ldots, X_n]$ for the joint cumulants of the random variables $X_1, \ldots, X_n$.

\begin{definition}\label{defn_admiss_noise}
A  random distribution $\xi$ over $\mathbb{R}^2$  is said an admissible noise if it satisfies the following properties.
\begin{itemize}
\item For all test functions $\varphi\in \mathcal{D}(\mathbb{R}^2)$, $\xi(f)$ has moments of all orders and it is centered.
\item The law $\xi$ is stationary, i.e. invariant under the action of $\mathbb{R}^2$ on $\mathcal{D}'(\mathbb{R}^2)$
by translation.
\item For every $n\geq 2$ there exists a distribution  $\Gamma^n$ such that for every collection of test functions $ \varphi_1\,, \ldots \,, \varphi_n\in \mathcal{D}'(\mathbb{R}^2)$ one has 
\[\mathbb{K}[\xi(\varphi_1),\ldots,\xi(\varphi_n)]= \Gamma^n(\prod_{i=1}^n\varphi_i)\,.\]
\item  $\Gamma^n$ is locally integrable when $n\geq 3$ and  there exists a distribution $\mathcal{C}$ with singular support contained in $0$ such that
\begin{equation}\label{eq:covariance}
\mathbb{E}[\xi(f)\xi(g)]=\Gamma^2(fg)=\mathcal{C}\left(\int_{\mathbb{R}^2}f(z-\cdot)g(z)dz\right)
\end{equation}
and outside $0$ the distribution $\mathcal{C}$ is a smooth function that satisfies for some $0< \kappa<1$  and any $k\in \mathbb{N}^2$ 
\begin{equation}\label{eq:defn_analytic}
\sup_{z=(t,x)\in \mathbb{R}^2\setminus\{ 0\}} \partial^k_{t,x}\mathcal{C}(z) (|t|^{2+ k_2 +k_1/2 - \kappa}+ |x|^{4+  2k_2 +k_1- \kappa} )<\infty\,. 
\end{equation}

\end{itemize}
\end{definition}
\begin{remark}
Definition~\ref{defn_admiss_noise} is directly recalled from \cite[Def.~2.17]{CH}, so that all convergence results of that paper apply in the present setting. Moreover, condition~\eqref{eq:defn_analytic} links the almost sure analytic regularity of~$\xi$ with the algebraic index $\alpha_{\Xi} = -2 + \delta$.

The periodic extension of a space--time white noise $\zeta$ over $\mathbb{T}\times\mathbb{R}$ is trivially an admissible noise.  More generally, if  $\xi$ is a centered translation-invariant Gaussian noise over $\mathbb{T}\times\mathbb{R}$ with covariance distribution $\mathcal{C}$ satisfying~\eqref{eq:defn_analytic} then it is also an admissible noise.
\end{remark}

Once given this general class of noises, the usual way to define a converging choice is the BPHZ renormalisation choice, see   \cite[Prop.5.1]{LOTT}, \cite[Prop. 2.1]{BGHZ}, \cite[Prop. 2.1]{BHZ}. In our context this is simply  obtained by considering the random field $\Pi_{\beta}^{-,\varepsilon, c}$ and imposing recursively its expectation to be zero.
\begin{theorem} \label{BPHZ_theorem}
For any $\varepsilon> 0$ and any admissible noise $\xi	$ there  exists a unique  element $c^{\BPHZ}_{\varepsilon}\in  \mathcal{T}_-^*$ such that  for any $\beta\in T_-$ and $y\in \mathbb{R}^2$ one has
\begin{equation}\label{BPHZ_condition}
\mathbb{E}[\Pi_{\beta}^{-,\varepsilon, c^{\BPHZ}_{\varepsilon}}(y)]=0\,.
\end{equation}
We call $c^{\BPHZ}_{\varepsilon}$ the BPHZ renormalisation constants.
\end{theorem}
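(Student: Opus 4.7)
The plan is to construct $c^*_\varepsilon$ inductively on the complexity of multi-indices in $T_-$, exploiting the fact that the BPHZ condition \eqref{BPHZ_condition} becomes a triangular linear system for the components $c_\beta = \langle c, z^{\beta}\rangle$. Taking expectations of the second line in \eqref{renormalised_recursion_equation}, I obtain
\begin{equation*}
\mathbb{E}[\Pi^{-,\varepsilon, c}_\beta(y)] \;=\; \mathbb{E}[\Pi^{-,\varepsilon}_\beta(y)] \;+\; \mathbb{E}[(\Gamma^{*,\varepsilon}_{\PPi} c)(\beta)(y)]\,.
\end{equation*}
By the stationarity of $\xi$ (part of the admissibility assumption), both expectations are independent of $y \in \mathbb{R}^2$, so the system to solve really is a scalar system indexed by $T_-$.

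Next I would unpack \eqref{eq:defn_gamma}. The unique summand with $l = 0$ and $\beta' = \beta$ contributes exactly $c_\beta$, since the product is empty and the pairing $\langle z^\beta_*, z^\beta\rangle = 1$. For every other configuration $(l, \mathbf{n}_1, \ldots, \mathbf{n}_l, \beta_1, \ldots, \beta_l, \beta')$ with non-vanishing pairing, the explicit form of $D^{(\mathbf{n})}$ shows that each application of a derivation strictly raises the number of generators, and the extra $l$ monomials $z^{\beta_i}_*$ do so as well; hence $\beta'$ is strictly smaller than $\beta$ with respect to the grading by fertility $[\,\cdot\,]$ (equivalently, by the total length $\ell$ after matching with $\beta$). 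Isolating the diagonal term, the BPHZ equation $\mathbb{E}[\Pi^{-,\varepsilon, c^*_\varepsilon}_\beta(y)] = 0$ reads
\begin{equation*}
c^*_\varepsilon(\beta) \;=\; -\,\mathbb{E}[\Pi^{-,\varepsilon}_\beta(y)] \;-\; \sum_{\beta' \prec \beta} F^{\beta'}_\beta\bigl((c^*_\varepsilon(\beta''))_{\beta'' \prec \beta}\bigr)\,,
\end{equation*}
where each $F^{\beta'}_\beta$ is polynomial in the already determined constants, with coefficients of the form $\mathbb{E}\bigl[\prod_i \partial^{\mathbf{n}_i}\Pi^{\varepsilon,c}_{\beta_i}(y)\bigr]$. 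These expectations are finite at fixed $\varepsilon > 0$ since $\xi_\varepsilon$ is a.s.\ smooth and $\xi$ admits moments of all orders.

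Ordering $T_-$ by a total order refining the partial order $\prec$ above, I would then proceed by strong induction: for the minimal elements, the right-hand side collapses to $-\mathbb{E}[\Pi^{-,\varepsilon}_\beta(y)]$, which fixes $c^*_\varepsilon(\beta)$; for larger $\beta$, the induction hypothesis provides all $c^*_\varepsilon(\beta'')$ with $\beta'' \prec \beta$, and uniqueness at each step follows from the triangular structure. Since $T_-$ is finite, this procedure terminates and produces a unique $c^*_\varepsilon \in \mathcal{T}_-^*$ satisfying \eqref{BPHZ_condition}.

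The main obstacle, and the only step that requires care rather than bookkeeping, is the combinatorial verification of triangularity: one must confirm that whenever a summand in \eqref{eq:defn_gamma} involves $c_{\beta'}$ with $(l, \beta') \neq (0, \beta)$, then $\beta' \prec \beta$ strictly. This reduces to inspecting the action of the derivations $D^{(\mathbf{n})}$ on the dual generators, which is purely algebraic and is already implicit in the formalism of \cite{BL23}.
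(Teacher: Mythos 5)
Your overall strategy---isolating the diagonal term $c_\beta$ in \eqref{eq:defn_gamma}, using the action of the derivations $D^{(\mathbf{n})}$ to order $T_-$ triangularly, and solving the resulting recursive system---is essentially the paper's route, which simply cites \cite[Cor.~4.6]{BL23} for the existence of the order $|\cdot|_{\prec}$; the finiteness of the coefficients at fixed $\varepsilon>0$ is also handled identically.

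The genuine gap is your opening claim that ``by the stationarity of $\xi$ \dots both expectations are independent of $y$''. This is false for the multi-indices in $T_-$ that contain polynomial variables $z_{\mathbf{n}}$ (for instance $z_{(\<generic>,1)}z_{(0,1)}$ in Table~\ref{tab:kpz}): the recursion \eqref{canonical_recursion_equation} imposes $\Pi^{\varepsilon}_{z_{\mathbf{n}}}(y)=y^{\mathbf{n}}$, a deterministic non-constant function, so $\Pi^{-,\varepsilon}_{\beta}(y)$ is not a stationary process and $\mathbb{E}[\Pi^{-,\varepsilon}_{\beta}(y)]$ genuinely depends on $y$. Consequently your triangular induction only produces a unique $c^{*}_{\varepsilon}$ enforcing \eqref{BPHZ_condition} at one fixed base point, say $y=\mathbf{0}$, and it remains to prove that this single finite family of constants kills the expectation at every other $y$ --- a priori an overdetermined requirement, since \eqref{BPHZ_condition} is a continuum of conditions per $\beta$. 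The paper closes exactly this step by introducing the deterministic re-expansion operators $T_{h}\colon \mathcal{T}_-\to\mathcal{T}_-$ furnished by stationarity, satisfying $T_{h}\circ T_{h'}=T_{h+h'}$ and $\mathbb{E}[\Pi^{-,\varepsilon}_{\beta}(\mathbf{0})]=\mathbb{E}[(\Pi^{-,\varepsilon})_{T_{-y}\beta}(y)]$; writing $\beta=T_{y}T_{-y}\beta$ and expanding $T_{y}$ reduces the condition at $y$ to a linear combination of conditions already established at $\mathbf{0}$. Without this argument (or an equivalent one) your proof does not yield the statement for all $y\in\mathbb{R}^2$, and uniqueness as stated is also only obtained relative to the base point.
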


\begin{proof}
The proof follows a classical standard argument on trees, see \cite[Thm. 6.18]{BHZ}, which we can adapt in this simpler context. Following \cite[Cor. 4.6]{BL23} there exists an order $|\beta|_{\prec} $ over the elements of $T_-$  such that for any $y\in \mathbb{R}^2$ one has
\[(\Gamma^{*, \varepsilon}_{\PPi}c)(\beta)= c_{\beta}+ \sum_{|\beta'|_{\prec}<|\beta|_{\prec}}g_{\beta'}(y) c_{\beta'}\,,\]
for some random integrable coefficients $g_{\beta'}(y)$. Using the integrability of the admissible noise $\xi$ and this triangular structure we can indeed solve the recursive system 
\begin{equation}\label{eq:linear_system}
\mathbb{E}[\Pi_{\beta}^{-, \varepsilon}(\mathbf{0})]+  c_{\beta}+ \sum_{|\beta'|_{\prec}<|\beta|_{\prec}}\mathbb{E}[g_{\beta'}(\mathbf{0})] c_{\beta'}=0
\end{equation}
which is a system of linear equations admitting a unique solution which we denote by $c^{\BPHZ}_{\varepsilon}$. By construction of $c^{\BPHZ}_{\varepsilon}$, we have then
$\mathbb{E}[\Pi_{\beta}^{-,\varepsilon, c^{\BPHZ}_{\varepsilon}}(\mathbf{0})]=0$. 
In order to prove the equality \eqref{BPHZ_condition}, it is sufficient to remark that the hypothesis of translation invariance implies the existence for any $h\in \mathbb{R}^2$  of a deterministic  linear map $T_{h}\colon \mathcal{T}_-\to\mathcal{T}_-$ with $T_{h}\circ T_{h'}=T_{h+h'} $ and $T_{\mathbf{0}}=\text{id}$ such that for any $y\in \mathbb{R}^2$
\[\mathbb{E}[\Pi_{\beta}^{-, \varepsilon}(\mathbf{0})]= \mathbb{E}[\Pi^{-, \varepsilon}_{T_{-y}\beta}(y)]\]
since the convolution operation sends polynomials to zero and it is deterministic one has also recursively $\mathbb{E}[g_{\beta'}(\mathbf{0})]= \mathbb{E}[g_{T_{-y}\beta'}(y)]$ and we obtain again that by construction  
\[\mathbb{E}[\Pi_{T_{-y}\beta}^{-,\varepsilon, c^{\BPHZ}_{\varepsilon}}(y)]=0\,.\]
Writing now
\[ \mathbb{E}[\Pi_{\beta}^{-,\varepsilon, c^{\BPHZ}_{\varepsilon}}(y)]=\mathbb{E}[\Pi_{T_{y}T_{-y}\beta}^{-,\varepsilon, c^{\BPHZ}_{\varepsilon}}(y)]\]
by expanding the operator $T_{y}$ the term on the right-hand side is a linear combination of terms  whose expectation is zero, hence the result. The uniqueness of $c^{\BPHZ}_{\varepsilon}$ follows from the uniqueness of the system \eqref{eq:linear_system}.
\end{proof}
In order to preserve the space symmetry of the underlying Kernel $K$, we also focus on a special class of noises which preserve the space symmetry.
 \begin{definition} \label{symmetric_noise}
 An admissible noise $\xi$ is said to be a spatially symmetric noise if it satisfies the equality in law as stochastic process
  \begin{equation}\label{equality_in_law}
  \Pi_{\beta}^{-,\varepsilon}(t,-y)\overset{(d)}{=} -\Pi_{\beta}^{-,\varepsilon}(t,y)
  \end{equation}
  for any $\beta\in T_- $ containing at least one factor of  the form $z_{( \<derivative>,k)}$ with $k\geq 0$ or a factor of the form $z_{( 0,1)}$.  
  \end{definition}
  
  \begin{remark}\label{rk_algebraic_reduction}
 The main idea behind condition \eqref{equality_in_law} is the following. If a multi-index $\beta \in T_-$ contains a factor of the form $z_{( \<derivative>,k)}$ with $k \ge 0$, or a factor of the form $z_{(0,1)}$, then by the construction of $T_-$ such a symbol can appear only once for homogeneity reasons, and these two types of factors can never appear together. Since both $z_{(0,1)}$ and $z_{( \<derivative>,k)}$ are associated, by construction, with odd functions, it is therefore reasonable to impose the hypothesis of spatial asymmetry.
  \end{remark}
This condition is non-trivial, as there are examples for which it is satisfied.
  \begin{proposition} \label{symmetric_ex}
The space-time white noise and the periodic extension of any Gaussian admissible random distribution over $\mathbb{R}\times \mathbb{T}$ such that the function $\mathcal{C}$ given by \eqref{eq:covariance} satisfies $\mathcal{C}(t,-x)=\mathcal{C}(t,x)$ are spatially symmetric noises.
 \end{proposition}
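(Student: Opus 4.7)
The key is to track how the spatial reflection $S\colon (t,x)\mapsto (t,-x)$ interacts with the recursive construction \eqref{canonical_recursion_equation}, using the space symmetries of the data. First, for both classes of noises we have $\xi\overset{(d)}{=}\xi\circ S$: in the centered Gaussian case this follows from Gaussian laws being determined by their covariance combined with the assumption $\mathcal{C}(t,-x)=\mathcal{C}(t,x)$, while for the periodic space-time white noise the covariance $\delta_{0}\otimes\delta_{0}$ is trivially invariant under $S$. Since the mollifier $\varrho$ is symmetric in space, this transfers to $\xi_\varepsilon\circ S\overset{(d)}{=}\xi_\varepsilon$ as smooth random processes on $\mathbb{R}^2$.

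\textbf{Second step.} Associate to each $\beta\in T$ the \emph{spatial parity}
\[
p(\beta) \;:=\; \sum_{(l,k)\in\mathcal{N}}\beta(z_{(l,k)})\!\sum_{\mathbf{n}\in\mathbb{N}^2}\!k(\mathbf{n})\,n_2 \;+\; \sum_{\mathbf{n}\in \mathbb{N}^2}\beta(z_{\mathbf{n}})\,n_2 \pmod 2\,,
\]
which is additive under multiplication of multi-indices. Directly from the definitions one has $p(z_{(\<derivative>,k)})=p(z_{(0,1)})=1$ while $p(z_{(\<generic>,k)})=p(z_{(\<derivative2>,k)})=0$. Hence, by Remark~\ref{rk_algebraic_reduction}, any $\beta\in T_-$ meeting the hypothesis in Definition~\ref{symmetric_noise} satisfies $p(\beta)\equiv 1\pmod 2$, since the single odd factor carries all the parity while all other admissible factors contribute evenly.

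\textbf{Third step.} Viewing $\Pi_\beta^{\varepsilon}$ and $\Pi_\beta^{-,\varepsilon}$ as deterministic functionals of the mollified noise (the recursion \eqref{canonical_recursion_equation} is algebraic in $\xi_\varepsilon$ and uses only convolution with $K$ together with derivatives), write $\Pi_\beta^{\varepsilon}[\eta](z)$ for the value at $z$ when the noise is replaced by a generic smooth $\eta$. We prove by induction on the homogeneity that
\[
\Pi_\beta^{\varepsilon}[\eta](Sz) \;=\; (-1)^{p(\beta)}\,\Pi_\beta^{\varepsilon}[\eta\circ S](z)\,,
\]
together with the analogous identity for $\Pi^{-,\varepsilon}_\beta$. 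The base case $\Pi^{\varepsilon}_{z_{\mathbf{n}}}(y)=y^{\mathbf{n}}$ produces $(-1)^{n_2}$ under $S$; the convolution $\Pi_\beta^{\varepsilon}=K*\Pi_\beta^{-,\varepsilon}$ preserves the identity by the change of variables $z\mapsto Sz$ since $K(Sz)=K(z)$; and in each summand of \eqref{canonical_recursion_equation} corresponding to a decomposition $z^{\beta}=z_{(l,k)}\prod_{\mathbf{n},i}z^{\beta_i}$, each derivative $\partial^{\mathbf{n}}\Pi^{\varepsilon}_{\beta_i}$ contributes a factor $(-1)^{n_2}$ after reflection which, together with the inductive signs $(-1)^{p(\beta_i)}$ and the trivial parity of $\xi_l(Sz)=(\eta\circ S)_l(z)$ (for $l=\Xi$ this is the reflected noise, for $l=0$ it is $1$), collects exactly to $(-1)^{p(\beta)}$. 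Specialising $\eta=\xi_\varepsilon$ and applying the process-level law equality $\xi_\varepsilon\overset{(d)}{=}\xi_\varepsilon\circ S$ from the first step yields $\Pi_\beta^{-,\varepsilon}(Sz)\overset{(d)}{=}(-1)^{p(\beta)}\Pi_\beta^{-,\varepsilon}(z)$, which under the parity assumption is exactly \eqref{equality_in_law}. The only mild obstacle is the precise sign bookkeeping through the recursion; the transfer from an a.s.\ deterministic functional identity to a law equality of stochastic processes is standard once $\xi_\varepsilon\overset{(d)}{=}\xi_\varepsilon\circ S$ is known.
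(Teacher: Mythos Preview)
Your proof is correct and follows the same underlying idea as the paper---the spatial symmetry of $K$, $\varrho$, and the law of $\xi$ propagates through the recursion \eqref{canonical_recursion_equation}---but you formalise it considerably more than the paper does. The paper handles the white-noise case by direct inspection of the finitely many $\beta$ in Table~\ref{tab:kpz}, and for general symmetric Gaussian noise argues informally that ``the process $\Pi_\beta^{-,\varepsilon}$ will contain only one odd function which will be multiplied or convoluted with functions that are even functions or even in law''. Your parity invariant $p(\beta)$ and the deterministic functional identity $\Pi_\beta^{\varepsilon}[\eta](Sz)=(-1)^{p(\beta)}\Pi_\beta^{\varepsilon}[\eta\circ S](z)$ make this precise and treat both noise classes uniformly.

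One small point: your second step asserts that ``all other admissible factors contribute evenly'', which for the non-polynomial variables $z_{(\<generic>,k)}$, $z_{(\<derivative2>,k)}$ is immediate, but for the polynomial variables $z_{\mathbf{n}}$ you implicitly use that no $z_{\mathbf{n}}$ with odd $n_2>1$ occurs in $T_-$. This is true for the regime $0<\delta<1$ (such a factor would contribute $\|\mathbf{n}\|-2\geq 1$ to the homogeneity, and Remark~\ref{rk_algebraic_reduction} already encodes that only the single $z_{(0,1)}$ survives), but it is worth saying explicitly since Remark~\ref{rk_algebraic_reduction} as stated only rules out a second copy of $z_{(\<derivative>,k)}$ or $z_{(0,1)}$, not arbitrary odd-parity polynomial factors. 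The paper's proof has the same implicit reliance on the structure of $T_-$, so this is not a defect relative to the original.
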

 \begin{proof}
 In the case of space time-white noise, one has $\delta=1^-$ and one can simply check the condition \eqref{equality_in_law} on few cases. In the general case,  as explained in Remark~\ref{rk_algebraic_reduction}, the multi-index $\beta$ will contain only once the symbol $z_{( 0,1)}$ or $z_{( \<derivative>,k)}$. Since the condition $\mathcal{C}(t,-x)=\mathcal{C}(t,x)$ with the Gaussianity implies that $\xi_{\varepsilon}(t,-y)= \xi_{\varepsilon}(t,y)$ with equality as stochastic process, the process $\Pi_{\beta}^{-,\varepsilon}$ will contain only one odd function which will be multiplied or convoluted with functions that are  even functions or even in law (in the same sense as $\xi_{\varepsilon}$), therefore the result satisfies \eqref{equality_in_law}.
 \end{proof}

 Thanks to this hypothesis, we can indeed reduce the set of possible renormalisation constants to a smaller one.
\begin{definition}\label{reduced_multi}
We define the set of reduced and even-reduced multi-indices as the following subset of $T$
\begin{align*}
\mathfrak{M}_{ \<derivative2>, \<generic>} &= \lbrace z^{\beta} = \prod_{k \geq 0}  \left( z_{( \<derivative2>,k)} \right)^{\beta( \<derivative2>,k )}  \prod_{k \geq 0}\left(  z_{(\<generic>,k)}\right)^{\beta(\<generic>,k)} : 
		[\beta] =1 \,,\quad ((\beta))\geq 2\rbrace\\
		\mathfrak{M}_{ \<derivative2>, \<generic>}^{\text{G}}&= \lbrace z^{\beta}\in \mathfrak{M}_{ \<derivative2>, \<generic>} \colon  \quad ((\beta)) \quad  \text{is even} \rbrace\,.
		\end{align*}
		The linear spaces generated by these sets will be denoted by $\mathcal{M}_{ \<derivative2>, \<generic>}$ and $\mathcal{M}_{ \<derivative2>, \<generic>}^{\text{G}} $ respectively.
\end{definition}
More generally we will also use the subsets 
$\mathfrak{M}_{ \<derivative2>, \<generic>}^k = \lbrace z^{\beta}  \in \mathfrak{M}_{ \<derivative2>, \<generic>} : ((\beta)) =  k 
		\rbrace $  with corresponding linear space $\mathcal{M}^k_{ \<derivative2>, \<generic>} $ for any $k\geq 2$. The reduction induced by $\mathfrak{M}_{ \<derivative2>, \<generic>} $  can be trivially described via the BPHZ renormalisation constants.

\begin{proposition}\label{prop:reduction}
For any spatially symmetric noise one has   $c^{\BPHZ}_{\varepsilon}( \beta)=0$ for any $\beta\in T_-\setminus \mathfrak{M}_{ \<derivative2>, \<generic>}^- $ with $\mathfrak{M}_{ \<derivative2>, \<generic>}^-=\{\gamma \in \mathfrak{M}_{ \<derivative2>, \<generic>}\colon | \gamma|<0\} $. Moreover, if the noise is also Gaussian one has in addition $c^{\BPHZ}_{\varepsilon}( \beta)=0$ for any $\beta\in T_-\setminus (\mathfrak{M}_{ \<derivative2>, \<generic>}^{\text{G}}) ^-$ with  $(\mathfrak{M}_{ \<derivative2>, \<generic>}^{G}) ^-=\{\gamma \in \mathfrak{M}_{ \<derivative2>, \<generic>}^{\text{G}}\colon | \gamma|<0\}$.
\end{proposition}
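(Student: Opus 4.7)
The plan is to proceed by induction on the order $|\cdot|_\prec$ from the proof of Theorem~\ref{BPHZ_theorem}, using at each step the recursive identity
\[
c^*_\varepsilon(\beta)=-\mathbb{E}[\Pi_\beta^{-,\varepsilon}(\mathbf{0})] - \sum_{|\beta'|_\prec < |\beta|_\prec} \mathbb{E}[g_{\beta'}(\mathbf{0})]\,c^*_\varepsilon(\beta')\,.
\]
For $\beta\in T_-\setminus\mathfrak{M}_{\<derivative2>, \<generic>}^-$ the inductive hypothesis eliminates the summands associated with $\beta'\notin \mathfrak{M}_{\<derivative2>, \<generic>}^-$, so the task reduces to showing that both $\mathbb{E}[\Pi_\beta^{-,\varepsilon}(\mathbf{0})]=0$ and $\mathbb{E}[g_{\beta'}(\mathbf{0})]=0$ for every $\beta'\in\mathfrak{M}_{\<derivative2>, \<generic>}^-$ with $\beta'\prec \beta$.

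The first step is to classify the obstructions to $\beta$ lying in $\mathfrak{M}_{\<derivative2>, \<generic>}^-$. Either $\beta$ has no factor outside $\{z_{(\<generic>,k)},z_{(\<derivative2>,k)}\}$, in which case a direct fertility computation forces $\beta=z_{(\<generic>,0)}$, and then $\Pi_\beta^{-,\varepsilon}=\xi_\varepsilon$ is centered; or $\beta$ contains at least one factor of asymmetric type ($z_{(\<derivative>,k)}$ or $z_{(0,e_{(0,1)})}$) or a polynomial $z_{\mathbf{n}}$ with $\mathbf{n}\neq \mathbf{0}$. In the asymmetric case, Definition~\ref{symmetric_noise} makes $\Pi_\beta^{-,\varepsilon}$ spatially odd in law, and so its expectation at $\mathbf{0}$ vanishes. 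In the polynomial case, the fertility constraint $[\beta]=1$ combined with the subcriticality assumption forces the polynomial in every admissible decomposition of \eqref{canonical_recursion_equation} to sit on an $\mathcal{I}_{(0,0)}$ edge (as exemplified by all polynomial-bearing entries in Table~\ref{tab:kpz}), and then its contribution $\Pi^\varepsilon_{z_{\mathbf{n}}}(y)=y^{\mathbf{n}}$ vanishes at $y=\mathbf{0}$. In both cases $\mathbb{E}[\Pi_\beta^{-,\varepsilon}(\mathbf{0})]=0$.

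The main obstacle is the second step: controlling $\mathbb{E}[g_{\beta'}(\mathbf{0})]$ for $\beta'\in\mathfrak{M}_{\<derivative2>, \<generic>}^-$ of lower order than $\beta$. Reading \eqref{eq:defn_gamma}, the coefficient $g_{\beta'}$ is obtained by extracting a sub-multi-index of $\beta'$-type from $\beta$ via the derivations $D^{(\mathbf{n})}$; since $\beta'$ carries no forbidden factor by assumption, the forbidden factor of $\beta$ has to survive in the remaining product $\prod \partial^{\mathbf{n}_i}\Pi^{\varepsilon}_{\beta_i}$ entering $g_{\beta'}$. The same spatial-asymmetry or polynomial-vanishing mechanism used in the first step then forces $\mathbb{E}[g_{\beta'}(\mathbf{0})]=0$. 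The technical heart of the proof is this bookkeeping on where the forbidden factor lands after each application of $D^{(\mathbf{n}_i)}$; Remark~\ref{rk_algebraic_reduction}, stating that asymmetric factors can appear only once in $\beta$ and never side by side, simplifies the case analysis substantially.

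For the Gaussian half of the statement I would run the same induction, but with the extra input that all odd joint cumulants of a centered Gaussian vanish. Expanding $\mathbb{E}[\Pi_\beta^{-,\varepsilon}(\mathbf{0})]$ by the Wick formula, each term is a product of joint cumulants of $\xi_\varepsilon$ whose total number of noise arguments equals $((\beta))$; if $((\beta))$ is odd, at least one factor has odd order and the contribution vanishes. The same argument applies to $g_{\beta'}$, since $((\beta))-((\beta'))$ is odd whenever $\beta\notin(\mathfrak{M}_{\<derivative2>, \<generic>}^{\text{\tiny even}})^-$ and $\beta'\in(\mathfrak{M}_{\<derivative2>, \<generic>}^{\text{\tiny even}})^-$. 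Combined with the first part of the proposition (which already eliminates contributions from $\beta'\notin \mathfrak{M}_{\<derivative2>, \<generic>}^-$ in the recursive sum), the induction closes and gives the claim.
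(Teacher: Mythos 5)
Your proposal follows essentially the same route as the paper: show that $\mathbb{E}[\Pi_\beta^{-,\varepsilon}(\mathbf{0})]=0$ for the excluded multi-indices (centering for $((\beta))=1$, spatial asymmetry otherwise, odd noise count in the Gaussian case), and then propagate the vanishing through the triangular system \eqref{eq:linear_system} by arguing that the surviving coefficients $g_{\beta'}(\mathbf{0})$ still carry the forbidden factor.

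One mechanism you invoke does not quite work as stated. For the ``polynomial case'' you argue that a factor $z_{\mathbf{n}}$ with $\mathbf{n}\neq\mathbf{0}$ contributes $y^{\mathbf{n}}$, which vanishes at $y=\mathbf{0}$. This is fine for the top-level evaluation $\mathbb{E}[\Pi_\beta^{-,\varepsilon}(\mathbf{0})]$, where all factors in \eqref{canonical_recursion_equation} are evaluated at the same point, but it fails in the recursive step: inside $g_{\beta'}(\mathbf{0})$ the forbidden factor may sit inside a convolution $K*\Pi_\gamma^{-,\varepsilon}$, and pointwise vanishing of the integrand at a single point does not survive convolution. The paper avoids this by treating the $z_{(0,1)}$ case with the same oddness-in-law property as the $z_{(\<derivative>,k)}$ case --- note that Definition~\ref{symmetric_noise} is stated precisely for multi-indices containing \emph{either} type of factor --- and oddness in law is preserved under convolution with the spatially even kernel $K$, so the expectation at $\mathbf{0}$ still vanishes. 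Since you also invoke the spatial-asymmetry mechanism, the fix is immediate: drop the point-vanishing argument and use \eqref{equality_in_law} uniformly for both kinds of forbidden factor. With that correction your induction closes exactly as in the paper, including the Gaussian refinement via odd cumulants.
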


\begin{proof}
The result follows trivially from our hypothesis on the noise. Indeed, the only elements $\beta\in T_-$ with $((\beta))=1 $ are 
\[z_{ (\<generic>,0)}\,,\quad  z_{ (\<generic>,1)}z_{(0,1)}\,, \quad  z_{ (\<generic>,0)}z_{( \<derivative>,0)}. \]
Since for these symbols the renormalisation is only additive one has
\[c^{\BPHZ}_{\varepsilon}( \beta)= - \mathbb{E} [\Pi_{\beta}^{-, \varepsilon}(\mathbf{0})] \]
and these quantities are zero because the noise is centered. Concerning all other elements $\beta\in T_-\setminus \mathfrak{M}_{ \<derivative2>, \<generic>}^- $ they exactly coincide with the elements containing at least one factor of  the form $z_{( \<derivative>,k)}$ with $k\geq 0$ or a factor of the form $z_{( 0,1)}$. For all these elements we can then use the spatial symmetry hypothesis from which we deduce $\mathbb{E} [\Pi_{\beta}^{-, \varepsilon}(\mathbf{0})]=0 $. Plugging this condition in the linear system \eqref{eq:linear_system}, we obtain
\[ c_{\varepsilon}^*(\beta)=- \sum_{|\beta'|_{\prec}<|\beta|_{\prec}}\mathbb{E}[g_{\beta'}(\mathbf{0})] c_{\varepsilon}^*(\beta')\,.\]
By simply working recursively on $|\beta'|_{\prec}$ the non zero terms $c_{\varepsilon}^*(\beta')$ will satisfy $\beta'\in\mathfrak{M}_{ \<derivative2>, \<generic>}^- $, therefore the term $g_{\beta'}(\mathbf{0})$ will always contain  terms of the form $K*\Pi_{\gamma}^{-, \varepsilon}(\mathbf{0})$ for some $\gamma \in T_-\setminus \mathfrak{M}_{ \<derivative2>, \<generic>}^- $, which are zero in expectation because the convolution with $K$ does not destroy the spatial symmetry. In the special case when the underlying noise is also Gaussian we  can easily prove that one has also $\mathbb{E} [\Pi_{\beta}^{-, \varepsilon}(\mathbf{0})]=0 $ for any multi-index with $((\beta))$ odd, because it necessarily contain  odd monomials in the noise $\xi$.  Using the same recursive reasoning as above  we can also conclude that  $c^{\BPHZ}_{\varepsilon}( \beta)=0$ for any $\beta\in T_-\setminus (\mathfrak{M}_{ \<derivative2>, \<generic>}^{\text{G}}) ^-$.
 %	The proof is performed by induction using identity \eqref{eq:linear_system}.  Then, one has
%\begin{equs} c_{\beta} =  \sum_{|\beta'|_{\prec}<|\beta|_{\prec}} \mathbb{E}[\alpha_{\beta'}(\mathbf{0})] c_{\beta'}.
%	\end{equs}
%If $\beta'$ has an odd number of variables $ z_{(\<generic>,k)} $ then $c_{\beta'} = 0$ by the induction hypothesis. Otherwise, $\alpha_{\beta'}$ is necessarly an odd monomial in the noise $\xi$  and $ \mathbb{E}[\alpha_{\beta'}(\mathbf{0})] =0 $ which allows us to conclude.
\end{proof}

\begin{table}[ht]
\begin{tabular}{c | c | c| c}
$|\beta|$ & $\beta$ &  Trees &$\#\{\beta\}$ \\
 \hline
			&&&\\
			$(-1)^-$ & $ z_{ (\<generic>,0)}z_{ (\<generic>,1)}$, $\quad z_{ (\<generic>,0)}^2 z_{(\<derivative2>,0)}$  &\<Xi2> \,,\;\<I1Xitwo> & 2 \\ & &  &\\
			\hline 
			 & & &\\
			& $z_{ (\<generic>,0)}^2z_{ (\<generic>,2)}$, $ \quad z_{ (\<generic>,0)}z_{ (\<generic>,1)}^2$,  &  \<Xi3a> \,,
\<Xi3> \,,&\\ $(-\frac{1}{2})^-$& $ z_{ (\<generic>,0)}^2z_{ (\<generic>,1)}z_{( \<derivative2>,0)}$, &\{ \<I1IXi3b> \,, \<I1IXi3>\,\}    & 5 \\  & $ z_{ (\<generic>,0)}^3 z_{( \<derivative2>,0)}^2$,     $\quad z_{ (\<generic>,0)}^3z_{( \<derivative2>,1)}$&  \<I1IXi3c> \,, \<I1Xi3c>\, & \\  & & &\\ \hline
			 &&&\\ & $z_{ (\<generic>,0)}z_{ (\<generic>,1)}^3$, $\quad  z_{ (\<generic>,0)}^3z_{ (\<generic>,3)} $,  & \<Xi4> \,, \;  \<Xi4b>\,, \;  & \\ & $z_{ (\<generic>,0)}^2z_{ (\<generic>,1)}z_{ (\<generic>,2)}$,& \{ \<Xi4c> \,, \<Xi4e> \},& \\  & $ z_{ (\<generic>,0)}^2z_{ (\<generic>,1)}^2z_{(\<derivative2>,0)}$,  & \{\<I1Xi4a> \,,  \<I1Xi4b> \,, \<I1Xi4c> \,, \<2I1Xi4b> \}\,, & \\ & $ z_{ (\<generic>,0)}^3z_{ (\<generic>,1)}z_{( \<derivative2>,0)}$,   & \{\<Xi4eb> \,,  \<Xi4cb> \},   \;  & 10\\ $ (0)^- $ &$ z_{ (\<generic>,0)}^3z_{ (\<generic>,1)}z_{(\<derivative2>,0)}^2 $,& \{ \<I1Xi4ab> \,, \<I1Xi4bc> \,, \<I1Xi4ac> \,, \<2I1Xi4c> \},&\\&  $z_{ (\<generic>,0)}^4 z_{( \<derivative2>,0)}^3$, & \{\<Xitwo> \,, \<2I1Xi4> \},& \\ &$ z_{ (\<generic>,0)}^4z_{( \<derivative2>,0)}z_{(\<derivative2>,1)},$& \{  \<Xi4eab> \,, \<Xi4eabbis> \,, \<Xi4cab>  \},\;& \\ &  $ z_{ (\<generic>,0)}^4 z_{(\<derivative2>,2)} $, $\quad  z_{ (\<generic>,0)}^3 z_{ (\<generic>,1)}z_{( \<derivative2>,1)} $& \; \<Xi4ba>, \{\<Xi4ca> \,,  \<Xi4ea> \,, \<Xi4eabis>\} &\\ &&& \\
\hline 
\end{tabular}
		\caption{All reduced multi-indices when  $\delta=(1/2)^-$  with their tree representation.}
		\label{tab:kpz_relevant}
\end{table}

Similarly to what we have done with $T_-$, we also list in Table~\ref{tab:kpz_relevant} the set $\mathfrak{M}_{ \<derivative2>, \<generic>}^- $ when $\delta=1^-$. Recalling the map $\Psi$ from \eqref{def_Psi}, it is trivial to remark that  the inverse image $\Psi^{-1}(\mathfrak{M}_{ \<derivative2>, \<generic>})$ coincides with $\mathfrak{T}_{ \<derivative2>, \<generic>}$, the set of decorated trees with at least two noises among their nodes, no polynomial decorations on the nodes and 
at each edge $\mathcal{I}$ we attach at least one  noise or two incoming edges $\mathcal{I}_1\mathcal{I}_1$. These are called saturated trees, see \cite[Sec. 2]{BGHZ} and the map $\Psi$ can be easily expressed via the graphical notation

\begin{equs}[def_graph_psi]
	\begin{aligned}
	\Psi \left( \begin{tikzpicture}[scale=0.4,baseline=-2]
		\coordinate (root) at (0,-0.4);
		\coordinate (t2) at (-0.8,0.5);
		\coordinate (t3) at (0.8,0.5);
		\draw[symbols] (t2) -- (root);
		\draw[symbols] (t3) -- (root);
	\draw (0,-0.4) node[] {\<generic>};
		\draw (-0.9,0.7) node[] {\tiny$\sigma_1$};
		\draw (0.9,0.7) node[] {\tiny$\sigma_n$};
		\draw (0,0.7) node[] {\tiny$\cdots$};
	\end{tikzpicture} \right) &= z_{(\<generic>,m)}  \prod_{i=1}^m \Psi(\sigma_i), \\	\Psi \left( \begin{tikzpicture}[scale=0.4,baseline=-2]
		\coordinate (root) at (0,-0.4);
		\coordinate (t1) at (-0.3,0.5);
		\coordinate (t2) at (-1.1,0.5);
		\coordinate (t3) at (1.1,0.5);
		\coordinate (t4) at (1.1,0.5);
		\coordinate (tau1) at (1,0.6);
		\draw[kernels2] (t1) -- (root);
		\draw[kernels2] (t2) -- (root);
		\draw[symbols] (t3) -- (root);
		\node[not] (rootnode) at (root) {};
		\draw (-1.2,0.7) node[] {\tiny$\tau_1$};
		\node[not] (rootnode) at (root) {};
		\draw (1.3,0.7) node[] {\tiny$\tau_n$};
		\draw (-0.4,0.7) node[] {\tiny$\tau_{\tiny{2}}$};
			\draw (0.5,0.7) node[] {\tiny$\cdots$};
	\end{tikzpicture} \right) & = z_{(\begin{tikzpicture}[scale=0.1,baseline=-2]
			\coordinate (root) at (0,-0.4);
			\coordinate (t1) at (-.8,1.3);
			\coordinate (t2) at (.8,1.3);
			\draw[kernels2] (t1) -- (root);
			\draw[kernels2] (t2) -- (root);
			\node[not] (rootnode) at (root) {};
		\end{tikzpicture},n-2)} \prod_{i=1}^n \Psi(\tau_i). 
	\end{aligned}
\end{equs}
where with $\cdots$ means that the other trees $ \sigma_i, \tau_i $ are connected to the root via a blue edge. These trees are also present in  Table~\ref{tab:kpz_relevant}.

Once an ambient space for the renormalisation constants has been fixed, these quantities will be paired, in Section~\ref{sec:4}, with a fundamental family of non-linear maps built from $\mathcal{M}_{ \<derivative2>, \<generic>}$ (or $\mathcal{M}_{ \<derivative2>, \<generic>}^G$ when the underlying noise is Gaussian). This family allows us to describe the formal Taylor expansions of equation \eqref{eq:gen_KPZ}.

\begin{definition} \label{def_Uspilon}
We define the evaluation map $ \Upsilon_{\Gamma,\sigma} \colon  	\mathcal{M}_{ \<derivative2>, \<generic>} \rightarrow \mathcal{C}^{\infty}(\mathbb{R}, \mathbb{R})$ as 
	\begin{equs}\label{def:ev_map}
	\Upsilon_{\Gamma,\sigma}(z^{\beta})(u) =  \prod_{k \geq 0}\left( 2  \Gamma^{(k)}(u)  \right)^{\beta( \<derivative2>,k )}  
	\prod_{k \geq 0}\left( \sigma^{(k)}(u)  \right)^{\beta(\<generic>,k)}.
	\end{equs}
for any monomial $z^{\beta}\in \mathfrak{M}_{ \<derivative2>, \<generic>}$.
\end{definition}

\begin{remark}
This map already appears in \cite[Example 2.6]{BL23}. However, in the present setting we restrict its definition to a reduced family of multi-indices, in accordance with Proposition~\ref{prop:reduction}, since only these contribute to the renormalisation procedure. Moreover, the factorial terms from that definition will be added later in Section~\ref{sec:4}.
\end{remark}

\section{Symmetries of the evaluation map}
\label{Sec::3}

We now study the symmetry properties of the map $\Upsilon_{\Gamma,\sigma}$ from Definition~\ref{def_Uspilon}. A first-key aspect is its behavior under changes of variable. For any diffeomorphism $\phi\colon \mathbb{R}\to \mathbb{R}$, we define an action of $\phi$ on the nonlinearities $\sigma, \Gamma\colon \mathbb{R}\to \mathbb{R}$. If $u$ formally satisfies \eqref{eq:gen_KPZ}, then the transformed coefficients $\phi \cdot \sigma$ and $\phi \cdot \Gamma$ are defined so that $v = \phi(u)$ formally solves the transformed equation. 

Indeed, setting $u = \phi^{-1}(v)$ and applying the chain rule, we find that $v$ satisfies
\[
\partial_t v =
\partial_x^{2} v +
(\phi\cdot \Gamma)(v)\,(\partial_x v)^{2} +
(\phi\cdot g)(v)\,\partial_x v+ (\phi\cdot h)(v)
+ (\phi\cdot \sigma)(v)\,\xi ,
\]
where the transformed coefficients are determined implicitly by the following identities:
\begin{align} 
\label{eq:phi_action_sigma}
&(\phi \cdot \sigma)(\phi(u)) = \phi'(u)\, \sigma(u),\\ 
\label{eq:phi_action_Gamma}
&(\phi \cdot \Gamma)(\phi(u))\,(\phi'(u))^2 = \phi'(u)\, \Gamma(u) - \phi''(u),\\ 
&(\phi \cdot g)(\phi(u)) = g(u),\\ 
&(\phi \cdot h)(\phi(u)) = \phi'(u)\, h(u).
\end{align}

Since only $\Gamma$ and $\sigma$ appear in the definition of $\Upsilon_{\Gamma,\sigma}$ (see \eqref{def:ev_map}), we focus exclusively on the actions \eqref{eq:phi_action_sigma} and \eqref{eq:phi_action_Gamma}. Note that the action \eqref{eq:phi_action_sigma} extends naturally to any smooth function $f\colon \mathbb{R}\to \mathbb{R}$. By applying this transformation rule to elements in the image of $\Upsilon_{\Gamma,\sigma}$, we obtain a rigorous symmetry transformation of our system. Since it describes invariance by diffeomorphisms we call it geometric invariance.

\begin{definition} \label{def_symmetries}
We define the subspace $V_{\geo}\subset \mathcal{M}_{ \<derivative2>, \<generic>}$ as the subspace generated by those elements $v\in \mathcal{M}_{ \<derivative2>, \<generic>}$ such that for all choices of $\Gamma$, $\sigma$ and all diffeomorphisms $\phi \colon \mathbb{R} \to \mathbb{R}$ homotopic to the identity, one has 
	\begin{equs}
\varphi\cdot \Upsilon_{\Gamma  ,\sigma}(v) = \Upsilon_{\varphi \cdot\Gamma \, \varphi \cdot \sigma}(v)\,.
\end{equs}
\end{definition}

Given $N \geq 2$ integer, we set $V_{\geo}^N$ to be the elements of $V_{\geo}$ such that $((\beta))=N$. Thanks again to \cite[Lem. 2.23]{BL23}, for any  admissible  and spatially symmetric noise $\xi$, there exists $N_{\xi} \in \mathbb{N}^* $ such that  $\cup_{i=2}^{N_{\xi}} \mathfrak{M}^i_{ \<derivative2>, \<generic>} \subset \mathfrak{M}_{ \<derivative2>, \<generic>}^-\,$. We will use this index $N_{\xi}$ in the sequel  to denote
 \begin{equation}\label{geo_space}
V_{\geo}^{\xi} = \bigoplus_{i=2}^{N_{\xi}} V_{\geo}^{i}\,,\quad  V_{\geo}^{\xi, \text{G}} = \bigoplus_{i=2}^{\lfloor N_{\xi}/2 \rfloor} V_{\geo}^{2i}\,,
\end{equation}	
where the second one is considered only in the case of an underlying stationary Gaussian noise $\xi$.

In addition to the change of variable property, we also want to consider another symmetry coming directly from a stationary Gaussian noise $\xi$ whose periodic extension is admissible. Indeed it follows directly from the hypothesis that one has the equality in law \[\xi \stackrel{\mathrm{law}}{=} T\zeta\,,\]where $\zeta$ denotes space--time white noise on $\mathbb{R}\times\mathbb{T}$ and $T$ is a deterministic pseudo-differential operator of finite order. Therefore, modulo a deterministic term $T$ the usual It\^o applies in this context and we can define the second symmetry on $\Upsilon_{\Gamma, \sigma}$.

\begin{definition} \label{def_symmetries_Ito}
We define the space $V_{\Ito} \subset \mathcal{M}_{\<derivative2>, \<generic>}^G$
as the subspace generated by those elements $v\in \mathcal{M}_{ \<derivative2>, \<generic>}^G$  such that, for all $\Gamma$, $\sigma$, and $\bar{\sigma}$, the equality
$\sigma^2 = \bar{\sigma}^2$ implies the identity \[\Upsilon_{\Gamma, \sigma}(v) = \Upsilon_{\Gamma, \bar{\sigma}}(v)\,.\]
\end{definition}

Finally, we also impose a last structural property, motivated by the fact that in equation~\eqref{eq:gen_KPZ}, if one has $\Gamma = 0$ and $\sigma$ constant, then \eqref{eq:gen_KPZ} reduces to the stochastic heat equation with additive noise, which requires no renormalisation.
Therefore, if the nonlinearities vanish at a point $u_0 \in \mathbb{R}$, we would like any reasonable solution to~\eqref{eq:gen_KPZ} that remains sufficiently close to $u_0$ not to require renormalisation counterterms. We call such counterterms ``nice''. This leads to the following definition.

\begin{definition} \label{def_symmetries_nice}
We define the space $V_{\nice} \subset \mathcal{M}_{\<derivative2>, \<generic>}$
as the subspace generated by those elements $v\in \mathcal{M}_{ \<derivative2>, \<generic>}$  such that, for all $\Gamma$, $\sigma$, if $\Gamma(u_0)=0 $ and $\sigma'(u_0)=0$ for some $u_0\in \mathbb{R}$ then
\[\Upsilon_{\Gamma, \sigma}(v)(u_0) = 0\,.\]
\end{definition}
In this section we will carefully focus on describing all of these symmetries.

\subsection{Geometric invariance}

We first focus on the geometric invariance. To understand it, we consider how an infinitesimal change of coordinates $h \in \mathcal{C}^{\infty}(\mathbb{R}, \mathbb{R})$ is acting on the non-linearities $\Gamma$ and $\sigma$. To encode this perturbation at the level of multi-indices, we add a new element to $\mathcal{L}$. This symbol is denoted by $
\begin{tikzpicture}[scale=0.2,baseline=-2]
  \coordinate (root) at (0,0);
  \node[diff] (rootnode) at (root) {};
\end{tikzpicture}$ 
and its non-linearities are given by
\[ \mathcal{N}_{\begin{tikzpicture}[scale=0.2,baseline=-2]
		\coordinate (root) at (0,0);
		\node[diff] (rootnode) at (root) {};
	\end{tikzpicture}}=\{ ke_{(0,0)} \colon  k \in \mathbb{N}\}\,.
\]
As before, we repeat the procedure of defining multi-indices with these new definitions by setting
\[
\mathcal N^*
  = \mathcal N_{0}
  \sqcup \mathcal N_{\Xi}
  \sqcup \mathcal N_{\begin{tikzpicture}[scale=0.2,baseline=-2]
    \coordinate (root) at (0,0);
    \node[diff] (rootnode) at (root) {};
  \end{tikzpicture}},
\qquad
\mathcal R^*
  = \mathcal N^* \sqcup \mathbb N^2,
\]
and define $\mathfrak M^* = M(\mathcal R^*)$. Using again the classical identification between multi-indices and polynomials, we also use the shorthand notation
$z_{(\begin{tikzpicture}[scale=0.2,baseline=-2]
	\coordinate (root) at (0,0);
	\node[diff] (rootnode) at (root) {};
\end{tikzpicture},k)} 
$ to denote $z_{(\begin{tikzpicture}[scale=0.2,baseline=-2]
	\coordinate (root) at (0,0);
	\node[diff] (rootnode) at (root) {};
\end{tikzpicture},k e_{0,0})}$. Both definitions of $((\beta))$ and $[\beta]$ on $\mathfrak M$ extend naturally to $\mathfrak M^*$ by setting
\[
((\beta))
  = \sum_{k \in \mathcal N_{\Xi}} \beta(k),
\qquad
[\beta]
  = \sum_{k \in \mathcal N^*} \bigl(1 - \ell^*(k)\bigr)\beta(k)
    + \sum_{\mbn \in \mathbb N^2} \beta(\mbn),
\]
where $\ell^*(k) = \sum_{u \in \mathcal R^*} k(u)$. (we keep the same notation as before for the sake of simplicity).
	
	 Below, we combine the notion of  reduced multi-indices with the notion this new label to obtain various sets and vector spaces of populated multi-indices with the extra variables $ z_{(\begin{tikzpicture}[scale=0.2,baseline=-2]
		\coordinate (root) at (0,0);
		\node[diff] (rootnode) at (root) {};
	\end{tikzpicture},k)}  $,  $k\geq 0$.
	\begin{align*}
	\mathfrak{M}_{\begin{tikzpicture}[scale=0.2,baseline=-2]
			\coordinate (root) at (0,0);
			\node[diff] (rootnode) at (root) {};
		\end{tikzpicture}, \<derivative2>, \<generic>} & = \lbrace \prod_{k \geq 0}\left(  z_{(\begin{tikzpicture}[scale=0.2,baseline=-2]
	\coordinate (root) at (0,0);
	\node[diff] (rootnode) at (root) {};
\end{tikzpicture},k)}  \right)^{\beta(\begin{tikzpicture}[scale=0.2,baseline=-2]
	\coordinate (root) at (0,0);
	\node[diff] (rootnode) at (root) {};
\end{tikzpicture},k)}  \prod_{k \geq 0}\left( z_{( \<derivative2>,k)} \right)^{\beta( \<derivative2>,k )} \prod_{k \geq 0}\left(  z_{(\<generic>,k)}\right)^{\beta(\<generic>,k)}\,\colon
[\beta] =1, \,((\beta))>2\rbrace\,,\\ \mathfrak{M}_{\begin{tikzpicture}[scale=0.2,baseline=-2]
		\coordinate (root) at (0,0);
		\node[diff] (rootnode) at (root) {};
	\end{tikzpicture}, \<derivative2>, \<generic>}^N & = \lbrace z^{\beta}  \in \mathfrak{M}_{\begin{tikzpicture}[scale=0.2,baseline=-2]
	\coordinate (root) at (0,0);
	\node[diff] (rootnode) at (root) {};
\end{tikzpicture}, \<derivative2>, \<generic>} : ((\beta)) =  N 
\rbrace, \\	\mathcal{M}_{\begin{tikzpicture}[scale=0.2,baseline=-2]
		\coordinate (root) at (0,0);
		\node[diff] (rootnode) at (root) {};
	\end{tikzpicture}, \<derivative2>, \<generic>}  &=  \langle	\mathfrak{M}_{ \begin{tikzpicture}[scale=0.2,baseline=-2]
	\coordinate (root) at (0,0);
	\node[diff] (rootnode) at (root) {};
\end{tikzpicture},\<derivative2>, \<generic>} \rangle, \quad \mathcal{M}^N_{\begin{tikzpicture}[scale=0.2,baseline=-2]
	\coordinate (root) at (0,0);
	\node[diff] (rootnode) at (root) {};
\end{tikzpicture}, \<derivative2>, \<generic>}  =  \langle	\mathfrak{M}^N_{\begin{tikzpicture}[scale=0.2,baseline=-2]
\coordinate (root) at (0,0);
\node[diff] (rootnode) at (root) {};
\end{tikzpicture}, \<derivative2>, \<generic>} \rangle\,,.
	\end{align*}

All of the spaces introduced above are equipped with a natural scalar product
$\langle \,\cdot , \cdot \rangle$, defined by the condition
$\langle z^{\beta}, z^{\beta'} \rangle = \delta_{\beta,\beta'}$. In particular,
for any $v$ in one of these vector spaces and any $z^{\beta}$ in the corresponding
generating set, the quantity $\langle z^{\beta}, v \rangle$ is the coefficient of
$z^{\beta}$ in $v$.

To encode an infinitesimal change of coordinates along a direction
$h \in \mathcal{C}^{\infty}(\mathbb{R}, \mathbb{R})$ at the level of
$\Upsilon_{\Gamma, \sigma}$, we introduce the perturbed evaluation map
\[
\Upsilon^h_{\Gamma, \sigma} \colon
\mathcal{M}_{\begin{tikzpicture}[scale=0.2,baseline=-2]
  \coordinate (root) at (0,0);
  \node[diff] (rootnode) at (root) {};
\end{tikzpicture}, \<derivative2>, \<generic>}
\longrightarrow \mathcal{C}^{\infty}(\mathbb{R}, \mathbb{R}),
\]
defined by
\begin{equs}
\Upsilon^h_{\Gamma, \sigma}(z^{\beta})(u)
=
\prod_{k \geq 0}
\left( h^{(k)}(u) \right)^{\beta(\begin{tikzpicture}[scale=0.2,baseline=-2]
  \coordinate (root) at (0,0);
  \node[diff] (rootnode) at (root) {};
\end{tikzpicture},k)}
\prod_{k \geq 0}
\left( 2\,\Gamma^{(k)}(u) \right)^{\beta(\<derivative2>,k)}
\prod_{k \geq 0}
\left( \sigma^{(k)}(u) \right)^{\beta(\<generic>,k)} .
\end{equs}
for any
$z^{\beta} \in
\mathfrak{M}_{\begin{tikzpicture}[scale=0.2,baseline=-2]
  \coordinate (root) at (0,0);
  \node[diff] (rootnode) at (root) {};
\end{tikzpicture}, \<derivative2>, \<generic>}$.
This map preserves injectivity with respect to the extended multi-indices.

%\begin{equs}
%		z^{\beta} = \prod_{k \in\mathbb{N}} \left( 	z_{ (\begin{tikzpicture}[scale=0.2,baseline=-2]
%				\coordinate (root) at (0,0);
%				\node[diff] (rootnode) at (root) {};
%			\end{tikzpicture},k)}  \right)^{\beta(\begin{tikzpicture}[scale=0.2,baseline=-2]
%				\coordinate (root) at (0,0);
%				\node[diff] (rootnode) at (root) {};
%			\end{tikzpicture},k)}  \prod_{k \in \mathbb{N}} \left(   z_{( \<derivative2>,k)}  \right)^{\beta( \<derivative2>,k )} \prod_{k \in \mathbb{N}} \left( 	z_{ (\<generic>,k)}  \right)^{\beta(\<generic>,k)} 
%\end{equs}

\begin{theorem} \label{injectivity_Upsilon}
	The map $  \Upsilon_{\Gamma,\sigma}^{h} $ is injective in the sense that if for any $\Gamma,\sigma,h \in \mathcal{C}^{\infty}(\mathbb{R}, \mathbb{R})$ one has $\Upsilon_{\Gamma,\sigma}^h(v) = 0 $ for some $v\in \mathcal{M}_{\begin{tikzpicture}[scale=0.2,baseline=-2]
  \coordinate (root) at (0,0);
  \node[diff] (rootnode) at (root) {};
\end{tikzpicture}, \<derivative2>, \<generic>}$ then $ v=0$.
\end{theorem}
\begin{proof}
 Writing the hypothesis in coordinates, this means that for any $\Gamma,\sigma,h \in \mathcal{C}^{\infty}(\mathbb{R}, \mathbb{R})$ one has
	\begin{equs}
			 \sum_{\beta} \lambda_{\beta} \Upsilon_{\Gamma, \sigma}^h(z^\beta) = 0 \,,
	\end{equs}
where the sum runs over all extended  multi-indices $ \beta \in \mathfrak{M}_{\begin{tikzpicture}[scale=0.2,baseline=-2]
 		\coordinate (root) at (0,0);
 		\node[diff] (rootnode) at (root) {};
 	\end{tikzpicture}, \<derivative2>, \<generic>}$  and the coefficients $ \lambda_{\beta} $  are all zero with the exception of a finite number. Then our thesis is to prove $\lambda_{\beta}=0$ for all $\beta$. We can therefore choose specific $\Gamma,\sigma,h$ for discriminating the various $ z^{\beta} $. Let $ m $ be the highest integer such that  there exists $ z^{\beta} $ with $  \lambda_{\beta} \neq 0 $ and $ (\beta(\begin{tikzpicture}[scale=0.2,baseline=-2]
			\coordinate (root) at (0,0);
			\node[diff] (rootnode) at (root) {};
		\end{tikzpicture},m),
	\beta( \<derivative2>,m ), \beta(\<generic>,m)) \neq (0,0,0)$. We define
	\begin{equs}
		h(x) & =	h_{r_0,...,r_m}(x) = \sum_{k=0}^m r_k \frac{x^k}{k!}, \quad \Gamma(x) =	\Gamma_{s_0,...,s_m}(x) = \sum_{k=0}^m s_k \frac{x^k}{k!},
		\\  \sigma(x) & =	\sigma_{t_0,...,t_m}(x) = \sum_{k=0}^m t_k \frac{x^k}{k!}.
	\end{equs} 
	By choosing  functions $\Gamma,\sigma,h$ parametrised by $r_0,s_0,t_0,...,r_m,s_m,t_m$  one has
	\begin{equs}
		\Upsilon^h_{\Gamma,\sigma}(z_\beta)(0) =  \prod_{k=0}^m \left( 
		 \left( 	r_k \right)^{\beta(\begin{tikzpicture}[scale=0.2,baseline=-2]
				\coordinate (root) at (0,0);
				\node[diff] (rootnode) at (root) {};
			\end{tikzpicture},k)}
	\left( 2  s_k  \right)^{\beta( \<derivative2>,k )} \left( 	t_k  \right)^{\beta(\<generic>,k)} \right).
	\end{equs}
	One can observe that we obtain a monomial in $r_0,s_0,t_0,...,r_m,s_m,t_m$ which is uniquely associated to a multi-index $\beta$. Since this family of monomials is clearly free and describes smooth functions up to Taylor polynomials, we can conclude that $v=0$.
	\end{proof}
At the level of multi-indices we define $ [\cdot, \cdot] \colon \mathcal{M}_{\begin{tikzpicture}[scale=0.2,baseline=-2]
 		\coordinate (root) at (0,0);
 		\node[diff] (rootnode) at (root) {};
 	\end{tikzpicture}, \<derivative2>, \<generic>} \times \mathcal{M}_{\begin{tikzpicture}[scale=0.2,baseline=-2]
 		\coordinate (root) at (0,0);
 		\node[diff] (rootnode) at (root) {};
 	\end{tikzpicture}, \<derivative2>, \<generic>} \to \mathcal{M}_{\begin{tikzpicture}[scale=0.2,baseline=-2]
 		\coordinate (root) at (0,0);
 		\node[diff] (rootnode) at (root) {};
 	\end{tikzpicture}, \<derivative2>, \<generic>}  $ as
\begin{equs}
\,	 [v_1, v_2] = v_1 D v_2 - v_2 D v_1
 \end{equs}
where $D\colon \mathcal{M}_{\begin{tikzpicture}[scale=0.2,baseline=-2]
 		\coordinate (root) at (0,0);
 		\node[diff] (rootnode) at (root) {};
 	\end{tikzpicture}, \<derivative2>, \<generic>} \to \mathcal{M}_{\begin{tikzpicture}[scale=0.2,baseline=-2]
 		\coordinate (root) at (0,0);
 		\node[diff] (rootnode) at (root) {};
 	\end{tikzpicture}, \<derivative2>, \<generic>} $ is the derivation on multi-indices defined by
\begin{equs}
	D   	z_{ (\begin{tikzpicture}[scale=0.2,baseline=-2]
			\coordinate (root) at (0,0);
			\node[diff] (rootnode) at (root) {};
		\end{tikzpicture},k)}   =  	z_{ (\begin{tikzpicture}[scale=0.2,baseline=-2]
		\coordinate (root) at (0,0);
		\node[diff] (rootnode) at (root) {};
	\end{tikzpicture},k+1)}, \quad D  z_{( \<derivative2>,k)}    =  z_{( \<derivative2>,k+1)}, \quad D	z_{ (\<generic>,k)}  = 	z_{ (\<generic>,k+1)}
\end{equs}
and then extended to $\mathcal{M}_{\begin{tikzpicture}[scale=0.2,baseline=-2]
 		\coordinate (root) at (0,0);
 		\node[diff] (rootnode) at (root) {};
 	\end{tikzpicture}, \<derivative2>, \<generic>} $ via the Leibniz rule.

\begin{definition} \label{def:injectivity_Upsilon}
We first define the geometric map $\phi_\geo : \mathcal{M}_{ \<derivative2>, \<generic>} \rightarrow \mathcal{M}_{\begin{tikzpicture}[scale=0.2,baseline=-2]
		\coordinate (root) at (0,0);
		\node[diff] (rootnode) at (root) {};
	\end{tikzpicture}, \<derivative2>, \<generic>}$  by setting 
\begin{equs}
		\phi_\geo(	z_{ (\<generic>,k)})  &= D^{k}  [	z_{ (\<generic>,0 )}, z_{ (\begin{tikzpicture}[scale=0.2,baseline=-2]
			\coordinate (root) at (0,0);
			\node[diff] (rootnode) at (root) {};
		\end{tikzpicture},0)}] \\
	\phi_\geo( z_{( \<derivative2>,k)} ) & = - D^{k+1} \left( 	z_{( \<derivative2>,0)} z_{ (\begin{tikzpicture}[scale=0.2,baseline=-2]
		\coordinate (root) at (0,0);
		\node[diff] (rootnode) at (root) {};
	\end{tikzpicture},0)} 
\right) - 2  z_{ (\begin{tikzpicture}[scale=0.2,baseline=-2]
		\coordinate (root) at (0,0);
		\node[diff] (rootnode) at (root) {};
	\end{tikzpicture},k+2) }
\end{equs}
and then it is extended via the Leibniz rule.
We then define the compensated geometric map $ \hat{\phi}_\geo \colon  \mathcal{M}_{ \<derivative2>, \<generic>} \rightarrow \mathcal{M}_{\begin{tikzpicture}[scale=0.2,baseline=-2]
		\coordinate (root) at (0,0);
		\node[diff] (rootnode) at (root) {};
	\end{tikzpicture}, \<derivative2>, \<generic>} $ by simply putting 
\begin{equs}
	\hat{\phi}_\geo(z^{\beta}) = \phi_\geo(z^{\beta}) - [ z^{\beta}, z_{ (\begin{tikzpicture}[scale=0.2,baseline=-2]
			\coordinate (root) at (0,0);
			\node[diff] (rootnode) at (root) {};
		\end{tikzpicture},0)} ]\,.
\end{equs}
\end{definition}
We observe that 
\begin{equs}[leibniz_calculus]
	&D^{k}  [	z_{ (\<generic>,0 )}, z_{ (\begin{tikzpicture}[scale=0.2,baseline=-2]
			\coordinate (root) at (0,0);
			\node[diff] (rootnode) at (root) {};
		\end{tikzpicture},0)}] = D^{k} \left( 	z_{ (\<generic>,0)}  z_{ (\begin{tikzpicture}[scale=0.2,baseline=-2]
		\coordinate (root) at (0,0);
		\node[diff] (rootnode) at (root) {};
	\end{tikzpicture},1)} \right) - D^{k} \left(	z_{ (\<generic>,1)}  z_{ (\begin{tikzpicture}[scale=0.2,baseline=-2]
	\coordinate (root) at (0,0);
	\node[diff] (rootnode) at (root) {};
\end{tikzpicture},0)}  \right)\\&= z_{ (\<generic>,0)}z_{ (\begin{tikzpicture}[scale=0.2,baseline=-2]
			\coordinate (root) at (0,0);
			\node[diff] (rootnode) at (root) {};
		\end{tikzpicture},k+1)} - z_{ (\<generic>,k+1)}z_{ (\begin{tikzpicture}[scale=0.2,baseline=-2]
			\coordinate (root) at (0,0);
			\node[diff] (rootnode) at (root) {};
		\end{tikzpicture},0)} + \sum_{l=1}^{k}\frac{k!(k-2l+1)}{l!(k-l+1)!} z_{ (\<generic>,l)}z_{ (\begin{tikzpicture}[scale=0.2,baseline=-2]
			\coordinate (root) at (0,0);
			\node[diff] (rootnode) at (root) {};
		\end{tikzpicture},k-l+1)}. 
\end{equs}
This identity is useful for the computation of the examples later on. Moreover, it is straightforward to check that the maps $ 	\phi_\geo  $ and $ 	\hat{\phi}_\geo  $ are well-defined  preserving the populated condition defining  $\mathcal{M}_{\begin{tikzpicture}[scale=0.2,baseline=-2]
		\coordinate (root) at (0,0);
		\node[diff] (rootnode) at (root) {};
	\end{tikzpicture}, \<derivative2>, \<generic>}$ and the vector spaces $\phi_\geo(\mathcal{M}_{ \<derivative2>, \<generic>} ) $,  $\hat{\phi}_\geo(\mathcal{M}_{ \<derivative2>, \<generic>} ) $ lie in the subspace of $ \mathfrak{M}_{ \begin{tikzpicture}[scale=0.2,baseline=-2]
		\coordinate (root) at (0,0);
		\node[diff] (rootnode) at (root) {};
	\end{tikzpicture},\<derivative2>, \<generic>} $ generated by the elements that contain only one variable of type $ z_{(\begin{tikzpicture}[scale=0.2,baseline=-2]
	\coordinate (root) at (0,0);
	\node[diff] (rootnode) at (root) {};
\end{tikzpicture},k)} $ (i.e. $\beta(\begin{tikzpicture}[scale=0.2,baseline=-2]
	\coordinate (root) at (0,0);
	\node[diff] (rootnode) at (root) {};
\end{tikzpicture},k)=1$). Using the map $\hat{\varphi}_{\geo}$, we can characterize the geometric terms of $\Upsilon_{\Gamma, \sigma}$ by examining the kernel of $\hat{\varphi}_{\geo}$.

\begin{theorem} \label{geo_chain_rule} One has $v \in	V_{\geo} $ if and only if $ \hat{\varphi}_{\geo}(v) = 0$.
\end{theorem}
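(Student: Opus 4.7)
The plan is to reduce global diffeomorphism invariance to its infinitesimal version and then translate the infinitesimal equality into a multi-index identity via the injectivity of $\Upsilon^h_{\Gamma,\sigma}$ supplied by Theorem~\ref{injectivity_Upsilon}. Since every diffeomorphism homotopic to the identity is the time-one flow of a time-dependent vector field, the defining condition of $V_\geo$ for $v \in \mathcal{M}_{\<derivative2>,\<generic>}$ is equivalent to requiring, for every $\Gamma,\sigma$ and every $h \in \mathcal{C}^{\infty}(\mathbb{R},\mathbb{R})$, that
\begin{equs}
\tfrac{d}{dt}\Big|_{t=0}\bigl(\varphi_t \cdot \Upsilon_{\Gamma,\sigma}(v)\bigr) \;=\; \tfrac{d}{dt}\Big|_{t=0}\Upsilon_{\varphi_t \cdot \Gamma,\,\varphi_t \cdot \sigma}(v)\;,
\end{equs}
where $\varphi_t(u) = u + t h(u) + O(t^2)$.

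Next I would match both sides to the two pieces that make up $\hat{\phi}_\geo$. Expanding $(\varphi_t \cdot g)(\varphi_t(u)) = \varphi_t'(u)g(u)$ with $g = \Upsilon_{\Gamma,\sigma}(v)$ gives $\tfrac{d}{dt}|_{t=0}(\varphi_t \cdot g)(u) = h'(u)g(u) - h(u)g'(u)$; since $D$ acts as total $u$-differentiation after applying the multiplicative evaluation $\Upsilon^h_{\Gamma,\sigma}$, one recognises the right-hand side as $\Upsilon^h_{\Gamma,\sigma}([v, z_{(\<diff>,0)}])(u)$. For the other side, expanding the transformation rules $(\varphi_t \cdot \sigma)(\varphi_t(u)) = \varphi_t'(u)\sigma(u)$ and $(\varphi_t \cdot \Gamma)(\varphi_t(u))(\varphi_t'(u))^2 = \varphi_t'(u)\Gamma(u) - \varphi_t''(u)$ to first order in $t$ and differentiating $k$ times in $u$ yields
\begin{equs}
\tfrac{d}{dt}\Big|_{t=0}(\varphi_t\cdot\sigma)^{(k)}(u) &= \tfrac{d^k}{du^k}\bigl[h'\sigma - h\sigma'\bigr](u)\;,\\
\tfrac{d}{dt}\Big|_{t=0}\bigl(2(\varphi_t\cdot\Gamma)^{(k)}(u)\bigr) &= -2\tfrac{d^k}{du^k}\bigl[h\Gamma' + h'\Gamma + h''\bigr](u)\;,
\end{equs}
which are exactly the values of $\Upsilon^h_{\Gamma,\sigma}(\phi_\geo(z_{(\<generic>,k)}))$ and $\Upsilon^h_{\Gamma,\sigma}(\phi_\geo(z_{(\<derivative2>,k)}))$ read off from Definition~\ref{def:injectivity_Upsilon}. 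The Leibniz rule for the infinitesimal perturbation of a product then extends this identification to arbitrary monomials $z^\beta$, so that $\tfrac{d}{dt}|_{t=0}\Upsilon_{\varphi_t \cdot \Gamma,\,\varphi_t \cdot \sigma}(z^\beta) = \Upsilon^h_{\Gamma,\sigma}(\phi_\geo(z^\beta))$, and by linearity the same holds for any $v \in \mathcal{M}_{\<derivative2>,\<generic>}$.

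Combining the two identifications, the condition $v \in V_\geo$ is equivalent to $\Upsilon^h_{\Gamma,\sigma}(\hat{\phi}_\geo(v)) = 0$ for all $\Gamma,\sigma,h$, and Theorem~\ref{injectivity_Upsilon} then upgrades this to the desired algebraic identity $\hat{\phi}_\geo(v) = 0$. The step that needs most care is tracking the constant $2$ in front of $\Gamma^{(k)}$ together with the correction term $-2 z_{(\<diff>,k+2)}$ arising from the $\varphi_t''$ contribution in the transformation rule of $\Gamma$; once this bookkeeping is handled, all the combinatorial manipulations reduce to straightforward applications of the Leibniz rule and the definition of $D$. The passage from infinitesimal to global invariance (for diffeomorphisms homotopic to the identity) is standard and follows by integrating flows of arbitrary vector fields, so it does not present a genuine obstacle.
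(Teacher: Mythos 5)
Your proposal is correct and follows essentially the same route as the paper: differentiate the invariance identity at $t=0$, compute the infinitesimal action on the generators $z_{(\<generic>,k)}$ and $z_{(\<derivative2>,k)}$ (your formulas match), extend by the Leibniz rule, and conclude via the injectivity of $\Upsilon^h_{\Gamma,\sigma}$ from Theorem~\ref{injectivity_Upsilon}. The one step you wave at --- integrating the infinitesimal identity back up to invariance under all diffeomorphisms homotopic to the identity --- is exactly the direction the paper also does not carry out in detail, deferring it to \cite[Prop.~6.2]{BGHZ}.
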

\begin{proof}
	We only prove the if part as the proof for the only if side is verbatim the same as the proof of \cite[Prop. 6.2]{BGHZ}. For $v \in 	V_{\geo}$ we apply the geometric identity to a family of maps $ (\psi_{t})_{t \geq 0}  $ with $ \psi_0 = \id, \partial_t \psi |_{t=0} = h $. By hypothesis on $v$ one has
	\begin{equs}
			\psi_t \cdot \Upsilon_{ \Gamma, \sigma}(v) = \Upsilon_{\psi_t \cdot \Gamma, \psi_t \cdot \sigma}(v).
	\end{equs}
Then, the main idea is to take the derivative at time $t=0$ on both sides of the previous equality:
\begin{equs}
	\partial_t	\left( \psi_t \cdot \Upsilon_{ \Gamma, \sigma}(v) \right)|_{t=0} =	\partial_t	\left( \Upsilon_{\psi_t \cdot \Gamma, \psi_t \cdot \sigma}(v)\right)|_{t=0}.
\end{equs}
In the sequel, we compute the two parts of the equality separately.
One first has
	\begin{equs}
	\partial_t	\left(	\psi_t \cdot \Upsilon^h_{ \Gamma, \sigma}(v) \right) |_{t=0} & = \partial_t	\left(	(\psi_t' \, \Upsilon^h_{ \Gamma, \sigma}(v)) \circ \psi_t^{-1} \right)|_{t=0} \\
	& = 	h'  \Upsilon_{ \Gamma, \sigma}(v) - \partial_u \Upsilon_{ \sigma, \Gamma}(v) h 
	\\ & =\Upsilon^h_{\Gamma,\sigma}\left( [v, z_{ (\begin{tikzpicture}[scale=0.2,baseline=-2]
			\coordinate (root) at (0,0);
			\node[diff] (rootnode) at (root) {};
		\end{tikzpicture},0)} ] \right).
	\end{equs}
Then, on the other hand, one has
\begin{equs}
	\partial_t \Upsilon_{\psi_t \cdot \Gamma, \psi_t \cdot \sigma}(  z_{ (\<generic>,k)} )|_{t=0} &= 	\partial_t \left(  (\psi_t' \, \sigma) \; \circ \psi_t^{(-1)} \right)^{(k)}  |_{t=0} 
	\\ &= \left(\partial_t \left(  (\psi_t' \, \sigma) \; \circ \psi_t^{(-1)} \right) |_{t=0} \right)^{(k)}
	\\ & =  \left( h' \sigma - h \sigma'  \right)^{(k)}  
	\\ &= \Upsilon^{h}_{\Gamma,\sigma} \left(  [	z_{ (\<generic>,0 )}, z_{ (\begin{tikzpicture}[scale=0.2,baseline=-2]
			\coordinate (root) at (0,0);
			\node[diff] (rootnode) at (root) {};
		\end{tikzpicture},0)}] \right)^{(k)}
	\\ & = \Upsilon^{h}_{\Gamma,\sigma} \left(D^{k}  [	z_{ (\<generic>,0 )}, z_{ (\begin{tikzpicture}[scale=0.2,baseline=-2]
			\coordinate (root) at (0,0);
			\node[diff] (rootnode) at (root) {};
		\end{tikzpicture},0)}] \right)
	\\ & = \Upsilon^{h}_{\Gamma,\sigma} [	\phi_\geo(	z_{ (\<generic>,k)}) ].
\end{equs}
One has also
\begin{equs}
		\partial_t \Upsilon_{\psi_t \cdot \Gamma, \psi_t \cdot \sigma}(   z_{(\<derivative2>,k)} )|_{t=0} & =  2 \	\partial_t \left( 
	\frac{	\psi_t' \, \Gamma - \psi_t''}{(\psi_t')^2} \circ \psi_t^{-1} \right)^{(k)}|_{t=0}
	\\ & = 2 \left( \partial_t \left( 
	\frac{	\psi_t' \, \Gamma - \psi_t''}{(\psi_t')^2} \circ \psi_t^{-1} \right)|_{t=0} \right)^{(k)}
	\\ &= 2 \left(  h' \Gamma - h \Gamma' -  h'' - 2 h'\Gamma \right)^{(k)}
	\\ & =  \Upsilon^{h}_{\Gamma,\sigma}  \left(  - 	z_{( \<derivative2>,0)} z_{ (\begin{tikzpicture}[scale=0.2,baseline=-2]
			\coordinate (root) at (0,0);
			\node[diff] (rootnode) at (root) {};
		\end{tikzpicture},1)} - z_{( \<derivative2>,1)}
	z_{ (\begin{tikzpicture}[scale=0.2,baseline=-2]
			\coordinate (root) at (0,0);
			\node[diff] (rootnode) at (root) {};
		\end{tikzpicture},0)}  - 2  z_{ (\begin{tikzpicture}[scale=0.2,baseline=-2]
		\coordinate (root) at (0,0);
		\node[diff] (rootnode) at (root) {};
	\end{tikzpicture},2)}
	\right)^{(k)}
	\\ & =   \Upsilon^{h}_{\Gamma,\sigma}  \left(  - D^{k+1}\left(	z_{( \<derivative2>,0)} z_{ (\begin{tikzpicture}[scale=0.2,baseline=-2]
			\coordinate (root) at (0,0);
			\node[diff] (rootnode) at (root) {};
		\end{tikzpicture},0)}  \right)  - 2  z_{ (\begin{tikzpicture}[scale=0.2,baseline=-2]
			\coordinate (root) at (0,0);
			\node[diff] (rootnode) at (root) {};
		\end{tikzpicture},k+2)}
	\right)
	\\ & = \Upsilon^{h}_{\Gamma,\sigma}  \left(  \phi_\geo( z_{( \<derivative2>,k)} ) \right).
\end{equs}
In the end, gathering the various terms, one gets for all  $\Gamma,\sigma,h \in \mathcal{C}^{\infty}(\mathbb{R}, \mathbb{R})$
\begin{equs}
	 \Upsilon^{h}_{\Gamma,\sigma} \left( \phi_\geo(v) \right)
	 =  \Upsilon^{h}_{\Gamma,\sigma} \left([ v, z_{ (\begin{tikzpicture}[scale=0.2,baseline=-2]
	 		\coordinate (root) at (0,0);
	 		\node[diff] (rootnode) at (root) {};
	 	\end{tikzpicture},0)} ] \right).
\end{equs}
Hence we conclude from the injectivity of the map $ \Upsilon_{\Gamma,\sigma}^{h} $ in Theorem~\ref{injectivity_Upsilon}.
\end{proof}

\subsection{Counting geometric terms}
From Theorem~\ref{geo_chain_rule} we can rewrite $V_{\geo}$ as the kernel of the linear map  $\hat{\phi}_\geo$. This identification comes also with an explicit computation of the dimension of this vector space. To achieve it, we introduce a last specific operation on multi-indices: For any couple $v_1, v_2\in\mathcal {M}$	we define the covariant derivative $\Nabla_{v_1} v_2\in \mathcal {M}$  as 
\begin{equs} \label{covariant_derivative}
	\Nabla_{v_1} v_2 = v_1 D v_2 + \frac{1}{2}  z_{( \<derivative2>,0)} v_1 v_2.
\end{equs}
From a simple computation on the fertility index we can indeed check that $\nabla$ is a well defined map $\nabla\colon \mathcal {M}_{ \<derivative2>, \<generic>} \times \mathcal {M}_{ \<derivative2>, \<generic>} \to \mathcal {M}_{ \<derivative2>, \<generic>} $. In particular, it preserves also the structure of $\hat{\phi}_\geo$.
\begin{proposition} \label{geometric_Nabla} For every 	$v_1, v_2 \in	V_{\geo}$, one has
	$\Nabla_{v_1} v_2  \in V_{\geo}$.
	\end{proposition}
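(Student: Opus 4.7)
The plan is to exploit the geometric content of $\Nabla$ rather than working directly with the algebraic characterisation of $V_\geo$ as the kernel of $\hat{\phi}_\geo$. The key observation is that $\Nabla$ on multi-indices corresponds, via $\Upsilon_{\Gamma,\sigma}$, to the classical one-dimensional affine covariant derivative on smooth functions, and the factor $\tfrac12$ in the definition of $\Nabla$ is precisely calibrated so that this correspondence holds. More explicitly, denote by $\nabla^{\Gamma}_X Y = X\,\partial_u Y + \Gamma\,X\,Y$ the covariant derivative on $\mathcal{C}^{\infty}(\mathbb{R},\mathbb{R})$ associated with the connection coefficient $\Gamma$. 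Since $\Upsilon_{\Gamma,\sigma}$ is multiplicative and the derivation $D$ is intertwined with $\partial_u$ on generators (so on all of $\mathcal{M}_{\<derivative2>,\<generic>}$), and since $\Upsilon_{\Gamma,\sigma}(z_{(\<derivative2>,0)}) = 2\Gamma$, the definition of $\Nabla$ yields the identity
\begin{equs}
\Upsilon_{\Gamma,\sigma}(\Nabla_{v_1} v_2) = \Upsilon_{\Gamma,\sigma}(v_1)\,\partial_u \Upsilon_{\Gamma,\sigma}(v_2) + \Gamma\,\Upsilon_{\Gamma,\sigma}(v_1)\Upsilon_{\Gamma,\sigma}(v_2) = \nabla^{\Gamma}_{\Upsilon_{\Gamma,\sigma}(v_1)}\Upsilon_{\Gamma,\sigma}(v_2)\,.
\end{equs}

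The second ingredient is the naturality of $\nabla^{\Gamma}$ under diffeomorphisms of $\mathbb{R}$. The transformation rules defining $\varphi\cdot\sigma$ (a vector field pushforward) and $\varphi\cdot\Gamma$ (the classical transformation law of a one-dimensional Christoffel symbol) that are recalled at the start of Section~\ref{Sec::3} are exactly designed so that the identity
\begin{equs}
\varphi\cdot(\nabla^{\Gamma}_X Y) = \nabla^{\varphi\cdot\Gamma}_{\varphi\cdot X}(\varphi\cdot Y)
\end{equs}
holds for every pair of smooth functions $X,Y\colon\mathbb{R}\to\mathbb{R}$ and every diffeomorphism $\varphi$ homotopic to the identity. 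This can be checked by a one-line direct computation using the definitions of $\varphi\cdot X$, $\varphi\cdot Y$ and $\varphi\cdot\Gamma$.

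Assembling the two steps, the proof is immediate. By hypothesis, $v_1,v_2 \in V_{\geo}$, so $\varphi\cdot\Upsilon_{\Gamma,\sigma}(v_i) = \Upsilon_{\varphi\cdot\Gamma,\varphi\cdot\sigma}(v_i)$ for $i=1,2$, and chaining the two identities above gives
\begin{equs}
\varphi\cdot\Upsilon_{\Gamma,\sigma}(\Nabla_{v_1}v_2) &= \varphi\cdot \nabla^{\Gamma}_{\Upsilon_{\Gamma,\sigma}(v_1)}\Upsilon_{\Gamma,\sigma}(v_2) = \nabla^{\varphi\cdot\Gamma}_{\varphi\cdot\Upsilon_{\Gamma,\sigma}(v_1)}(\varphi\cdot\Upsilon_{\Gamma,\sigma}(v_2)) \\ &= \nabla^{\varphi\cdot\Gamma}_{\Upsilon_{\varphi\cdot\Gamma,\varphi\cdot\sigma}(v_1)}\Upsilon_{\varphi\cdot\Gamma,\varphi\cdot\sigma}(v_2) = \Upsilon_{\varphi\cdot\Gamma,\varphi\cdot\sigma}(\Nabla_{v_1}v_2)\,,
\end{equs}
which is exactly the geometric symmetry condition from Definition~\ref{def_symmetries}. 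The only part that requires any work is the geometric naturality of $\nabla^{\Gamma}$, but this is a classical one-line computation in local coordinates. Alternatively, one can give a purely algebraic proof by invoking Theorem~\ref{geo_chain_rule} and checking that $\hat{\phi}_\geo(\Nabla_{v_1}v_2)=0$; this reduces, after Leibniz expansion and the commutation $D\phi_\geo=\phi_\geo D$ on $\mathcal{M}_{\<derivative2>,\<generic>}$ (verified on generators), to a direct matching of monomials that uses the explicit formula for $\phi_\geo(z_{(\<derivative2>,0)})$, with the coefficient $\tfrac12$ in $\Nabla$ absorbing the $2$ appearing in that formula.
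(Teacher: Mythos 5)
Your proof is correct, but it takes a genuinely different route from the paper's. The paper proves the statement purely algebraically: it invokes the characterisation $V_{\geo}=\ker\hat{\phi}_\geo$ from Theorem~\ref{geo_chain_rule}, expands $\phi_\geo(\Nabla_{v_1}v_2)$ by the Leibniz rule using $\phi_\geo(v_i)=[v_i,z_{(\begin{tikzpicture}[scale=0.2,baseline=-2]\coordinate (root) at (0,0);\node[diff] (rootnode) at (root) {};\end{tikzpicture},0)}]$, and matches the resulting monomials against $[\Nabla_{v_1}v_2,z_{(\begin{tikzpicture}[scale=0.2,baseline=-2]\coordinate (root) at (0,0);\node[diff] (rootnode) at (root) {};\end{tikzpicture},0)}]$ term by term. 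You instead verify the defining identity of Definition~\ref{def_symmetries} directly, via the two observations that (i) $\Upsilon_{\Gamma,\sigma}$ intertwines $\Nabla$ with the analytic covariant derivative $\nabla^\Gamma_XY=X\,\partial_uY+\Gamma XY$ (this is where the factor $\tfrac12$ cancels the $2$ in $\Upsilon_{\Gamma,\sigma}(z_{( \<derivative2>,0)})=2\Gamma$, and it uses that $D$ and $\partial_u$ are intertwined derivations), and (ii) $\nabla^\Gamma$ is natural under the transformation laws for $\sigma$ and $\Gamma$ recalled at the start of Section~\ref{Sec::3}; I checked the coordinate computation in (ii) and it does close up, the $\varphi''XY$ terms cancelling exactly. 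Your argument is shorter and explains \emph{why} the proposition holds — $\Nabla$ is literally the pullback of the affine connection through $\Upsilon_{\Gamma,\sigma}$ — and it does not need Theorem~\ref{geo_chain_rule} at all. What the paper's computation buys in exchange is that it stays entirely inside the multi-index calculus (the same calculus used for the dimension count in Proposition~\ref{upper_bound_dimension}) and produces the exact identity $\phi_\geo(\Nabla_{v_1}v_2)=[\Nabla_{v_1}v_2,z_{(\begin{tikzpicture}[scale=0.2,baseline=-2]\coordinate (root) at (0,0);\node[diff] (rootnode) at (root) {};\end{tikzpicture},0)}]$, which is slightly more information than mere membership in the kernel. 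Your closing sketch of the algebraic alternative (Leibniz expansion plus $D\phi_\geo=\phi_\geo D$, which indeed holds since both sides are derivations agreeing on generators) is essentially the paper's proof.
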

\begin{proof}
One has to check that $ 	\Nabla_{v_1} v_2 $ belongs to the kernel of $ \hat{\varphi}_{\geo} $. One first has
\begin{equs}
	\phi_\geo \left( \Nabla_{v_1} v_2  \right) & = \Nabla_{ \phi_\geo \left(v_1\right)} v_2  +  \Nabla_{v_1} \phi_\geo \left(v_2 \right) + \frac{1}{2} \phi_\geo  \left(  z_{( \<derivative2>,0)} \right) v_1 v_2
	\\ & = \Nabla_{ [  v_1, z_{ (\begin{tikzpicture}[scale=0.2,baseline=-2]
				\coordinate (root) at (0,0);
				\node[diff] (rootnode) at (root) {};
			\end{tikzpicture},0)} ]} v_2  +  \Nabla_{v_1} [  v_2, z_{ (\begin{tikzpicture}[scale=0.2,baseline=-2]
		\coordinate (root) at (0,0);
	\node[diff] (rootnode) at (root) {};
\end{tikzpicture},0)} ]  + \frac{1}{2} \phi_\geo  \left(  z_{( \<derivative2>,0)} \right) v_1 v_2
		\end{equs}
	where we have used the fact that  for $i \in \lbrace 1,2 \rbrace$ $\phi_\geo \left(v_i\right) =  [  v_i, z_{ (\begin{tikzpicture}[scale=0.2,baseline=-2]
				\coordinate (root) at (0,0);
				\node[diff] (rootnode) at (root) {};
			\end{tikzpicture},0)} ]$.
Then, continuing the computation we obtain
\begin{equs}
	\Nabla_{ [  v_1, z_{ (\begin{tikzpicture}[scale=0.2,baseline=-2]
				\coordinate (root) at (0,0);
				\node[diff] (rootnode) at (root) {};
			\end{tikzpicture},0)} ]} v_2 & =  v_1 D z_{ (\begin{tikzpicture}[scale=0.2,baseline=-2]
		\coordinate (root) at (0,0);
	\node[diff] (rootnode) at (root) {};
\end{tikzpicture},0)} D v_2 -  D v_1  z_{ (\begin{tikzpicture}[scale=0.2,baseline=-2]
\coordinate (root) at (0,0);
\node[diff] (rootnode) at (root) {};
\end{tikzpicture},0)} D v_2 + \frac{1}{2}  z_{( \<derivative2>,0)} v_1 D z_{ (\begin{tikzpicture}[scale=0.2,baseline=-2]
\coordinate (root) at (0,0);
\node[diff] (rootnode) at (root) {};
\end{tikzpicture},0)}  v_2
\\ &  - \frac{1}{2}  z_{( \<derivative2>,0)} D v_1  z_{ (\begin{tikzpicture}[scale=0.2,baseline=-2]
		\coordinate (root) at (0,0);
		\node[diff] (rootnode) at (root) {};
	\end{tikzpicture},0)}  v_2
	 \\   \Nabla_{v_1} [  v_2, z_{ (\begin{tikzpicture}[scale=0.2,baseline=-2]
			\coordinate (root) at (0,0);
			\node[diff] (rootnode) at (root) {};
		\end{tikzpicture},0)} ] & = z_{ (\begin{tikzpicture}[scale=0.2,baseline=-2]
		\coordinate (root) at (0,0);
		\node[diff] (rootnode) at (root) {};
	\end{tikzpicture},2)}  v_1 v_2 + z_{ (\begin{tikzpicture}[scale=0.2,baseline=-2]
	\coordinate (root) at (0,0);
	\node[diff] (rootnode) at (root) {};
\end{tikzpicture},1)}  v_1 D v_2 - z_{ (\begin{tikzpicture}[scale=0.2,baseline=-2]
\coordinate (root) at (0,0);
\node[diff] (rootnode) at (root) {};
\end{tikzpicture},1)}  v_1 D v_2 - z_{ (\begin{tikzpicture}[scale=0.2,baseline=-2]
\coordinate (root) at (0,0);
\node[diff] (rootnode) at (root) {};
\end{tikzpicture},0)}  Dv_1 Dv_2
\\ & + \frac{1}{2}  z_{( \<derivative2>,0)} v_1 D z_{ (\begin{tikzpicture}[scale=0.2,baseline=-2]
		\coordinate (root) at (0,0);
		\node[diff] (rootnode) at (root) {};
	\end{tikzpicture},0)}  v_2 - \frac{1}{2}  z_{( \<derivative2>,0)} v_1  z_{ (\begin{tikzpicture}[scale=0.2,baseline=-2]
\coordinate (root) at (0,0);
\node[diff] (rootnode) at (root) {};
\end{tikzpicture},0)}  D v_2
 \\  \frac{1}{2} \phi_\geo  \left(  z_{( \<derivative2>,0)} \right) v_1 v_2 & =   - \frac{1}{2} D \left( z_{( \<derivative2>,0)}  z_{ (\begin{tikzpicture}[scale=0.2,baseline=-2]
		\coordinate (root) at (0,0);
		\node[diff] (rootnode) at (root) {};
	\end{tikzpicture},0)} \right) v_1 v_2
 -    z_{ (\begin{tikzpicture}[scale=0.2,baseline=-2]
		\coordinate (root) at (0,0);
		\node[diff] (rootnode) at (root) {};
	\end{tikzpicture},2)}  v_1 v_2.
\end{equs}
	On the other hand, one has
	\begin{equs}
	\	[  \Nabla_{v_1} v_2, z_{ (\begin{tikzpicture}[scale=0.2,baseline=-2]
				\coordinate (root) at (0,0);
				\node[diff] (rootnode) at (root) {};
			\end{tikzpicture},0)} ]
		 & = v_1 D v_2 D z_{ (\begin{tikzpicture}[scale=0.2,baseline=-2]
		 		\coordinate (root) at (0,0);
		 		\node[diff] (rootnode) at (root) {};
		 	\end{tikzpicture},0)} + \frac{1}{2}  z_{( \<derivative2>,0)} v_1 v_2 D z_{ (\begin{tikzpicture}[scale=0.2,baseline=-2]
		 	\coordinate (root) at (0,0);
		 	\node[diff] (rootnode) at (root) {};
	 	\end{tikzpicture},0)}
 	\\ & - D(v_1 D v_2)  z_{ (\begin{tikzpicture}[scale=0.2,baseline=-2]
 			\coordinate (root) at (0,0);
 			\node[diff] (rootnode) at (root) {};
 		\end{tikzpicture},0)} + \frac{1}{2} D \left(  z_{( \<derivative2>,0)} v_1 v_2 \right)  z_{ (\begin{tikzpicture}[scale=0.2,baseline=-2]
 			\coordinate (root) at (0,0);
 			\node[diff] (rootnode) at (root) {};
 		\end{tikzpicture},0)}.
	\end{equs}
Combining the two sides we conclude  that
$\phi_\geo \left( \Nabla_{v_1} v_2  \right) = 	[  \Nabla_{v_1} v_2, z_{ (\begin{tikzpicture}[scale=0.2,baseline=-2]
			\coordinate (root) at (0,0);
			\node[diff] (rootnode) at (root) {};
		\end{tikzpicture},0)} ]$.
\end{proof}
Since one has trivially $\hat{\varphi}_{\geo}(z_{(\<generic>,0)})=0 $ (even if $z_{(\<generic>,0)}$ does not technically belong to $\mathcal {M}_{ \<derivative2>, \<generic>}$) the main idea to describe $V_{\geo}$ is to iterate $\nabla$ in order to fill out the whole space for $V_{\geo}$.

\begin{definition}\label{def_B_xi}
For any integer $N \ge 1$, we define $\mathfrak{B}_N$ to be the set obtained by iterating the covariant derivative $N-1$ times on $z_{(\<generic>,0)}$,  that is we set recursively $\mathfrak{B}_1=\{ z_{(\<generic>,0)}\}$ and for any $N\geq 1$ we define 

\begin{equation}\label{eq:def_Bn}
\mathfrak{B}_{N+1}= \{\Nabla_{v} w\colon  v\in \mathfrak{B}_{k}, w\in \mathfrak{B}_{N+1-k}, \, 1\leq k\leq N\}\,.
\end{equation}
For any admissible and spatially symmetric noise $\xi$, we set
\[
\mathfrak{B}_\xi = \bigcup_{i=2}^{N_\xi} \mathfrak{B}_i\,, \quad \mathfrak{B}_\xi^{\text{G}} = \bigcup_{i=1}^{\lfloor N_\xi/2 \rfloor} \mathfrak{B}_{2i}\,.
\]
where the second one is introduced if the underlying noise is also Gaussian.
\end{definition}
 For instance, one has
\begin{equs}
\mathfrak{B}_{2} &= \left \lbrace	 		\Nabla_{\<generic>}  \<generic>\right \rbrace\,,\quad \mathfrak{B}_{3} = \left \lbrace	 	\Nabla_{\<generic>} 	\Nabla_{\<generic>}  \<generic>, \, 	\Nabla_{\Nabla_{\<generic>}  \<generic> }   \<generic> \right \rbrace\,,\\
	\mathfrak{B}_{4} &= \left \lbrace	\Nabla_{\<generic>} 	\Nabla_{\<generic>} 	\Nabla_{\<generic>}  \<generic>, \, 	\Nabla_{\<generic>} 	\Nabla_{\Nabla_{\<generic>}  \<generic> }   \<generic>, \,  	\Nabla_{\Nabla_{\<generic>}  \<generic> } 	\Nabla_{\<generic>} 	  \<generic> , \,		\Nabla_{\Nabla_{\<generic>} \Nabla_{\<generic>}  \<generic> }   \<generic>, \,	\Nabla_{\Nabla_{\Nabla_{\<generic>} \<generic>}   \<generic> }   \<generic> \right \rbrace\,,
\end{equs}
where we have made the following abuse of notations replacing $ 	z_{ (\<generic>,0)} $ by $ \<generic> $. By construction of $\mathfrak{B}_N$, all of its elements belong to $V_{\geo}^N$. Moreover it follows almost immediately from \eqref{eq:def_Bn} that the cardinality of $\mathfrak{B}_{N}$ is $C_{N-1}$ the $N-1$th Catalan number.

However, as one can immediately see, the elements of $\mathfrak{B}_{N}$ do not constitute a linearly independent family in general, as can be checked from the following identities
\begin{equs}\label{eq:extra_relations}
	\Nabla_{\Nabla_{\<generic>}  \<generic> } 	\Nabla_{\<generic>} 	  \<generic> =	\Nabla_{\Nabla_{\<generic>} \Nabla_{\<generic>}  \<generic> }   \<generic>\,, \, \quad
	2 \,	\Nabla_{\Nabla_{\<generic>} \Nabla_{\<generic>}  \<generic> }   \<generic> - 	\Nabla_{\<generic>} 	\Nabla_{\Nabla_{\<generic>}  \<generic> }   \<generic>   = 	\Nabla_{\Nabla_{\Nabla_{\<generic>} \<generic>}   \<generic> }    \<generic>\,. 
\end{equs}
However, we are able to relate the dimension of the generating vector space to an elementary set of multi-indices. In what follows, we consider the following sets 
\begin{equs}[free_nov_1]
\mathfrak{M}_{\<generic>} & = \lbrace z^{\beta} = \prod_{k \geq 0}\left( z_{(\<generic>,k)}\right)^{\beta(\<generic>,k)} : 
		[\beta] =1 ,\; ((\beta))\geq 2\rbrace, \quad \mathcal{M}_{\<generic>} = 	\langle \mathfrak{M}_{\<generic> }\rangle,  \\ \mathfrak{M}_{\<generic>}^N & = \lbrace z^{\beta}  \in \mathfrak{M}_{\<generic>}  : ((\beta)) =  N 
		\rbrace, \quad \mathcal{M}_{\<generic>}^N = 	\langle \mathfrak{M}_{\<generic> }^N\rangle.
\end{equs}
Using again the map $\Psi$ from \eqref{def_Psi}, one can show that the inverse image $\Psi^{-1}(\mathfrak{M}_{\<generic>})$ coincides with $\mathfrak{T}_{\<generic>}$, the set of trees with cardinality greater than $2$ whose nodes are all decorated by $\Xi$. The linear vector space generated by this set is denoted by $\langle\mathfrak{T}_{\<generic>}\rangle$.

\begin{proposition} \label{lower_bound_dimension}
For any $N\geq 2$ the linear vector space generated by $\mathfrak{B}_N$ is a subspace of 	$V_{\geo}^N$ with dimension $\geq \mathrm{Card}(\mathfrak{M}_{\<generic>}^N)$.
	\end{proposition}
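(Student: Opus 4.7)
The plan is to split the proposition into two parts: the inclusion $\langle \mathfrak{B}_N \rangle \subseteq V_{\geo}^N$, and the dimension lower bound. The inclusion is almost immediate. One has $\hat{\phi}_{\geo}(z_{(\<generic>,0)}) = 0$ directly from the definition, so $z_{(\<generic>,0)}$ belongs to $V_{\geo}$ by Theorem~\ref{geo_chain_rule}. Since $\mathfrak{B}_N$ is obtained by $N-1$ successive applications of $\nabla$ to $z_{(\<generic>,0)}$, applying Proposition~\ref{geometric_Nabla} inductively places every element of $\mathfrak{B}_N$ in $V_{\geo}$, and a quick bookkeeping on the index $((\cdot))$ (which is additive under $\nabla$) gives $((\beta))=N$ for each such element.

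For the dimension bound, I would introduce the linear projection $\pi : \mathcal{M}_{\<derivative2>,\<generic>} \to \mathcal{M}_{\<generic>}$ obtained by setting $z_{(\<derivative2>,k)} = 0$ for every $k \geq 0$. The key computation is that, for any $v_1, v_2 \in \mathcal{M}_{\<generic>}$,
\begin{equs}
\pi(\Nabla_{v_1} v_2) = \pi(v_1)\, D\, \pi(v_2),
\end{equs}
since the correction term $\tfrac{1}{2} z_{(\<derivative2>,0)} v_1 v_2$ is killed by $\pi$, while $D$ preserves the subalgebra $\mathcal{M}_{\<generic>}$ and commutes with $\pi$ on it. Consequently, $\pi(\mathfrak{B}_N)$ is precisely the set of $N-1$-fold iterations on $z_{(\<generic>,0)}$ of the Novikov product $(v_1, v_2) \mapsto v_1 D v_2$.

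The main step is then to invoke the identification of $(\mathcal{M}_{\<generic>}, \cdot\, D\, \cdot)$ with the free Novikov algebra on one generator, as proved in \cite{DL,BD23}: by freeness, iterating this product in every possible parenthesisation on $N$ copies of the generator $z_{(\<generic>,0)}$ yields a spanning family of the degree-$N$ component, which is exactly $\mathcal{M}_{\<generic>}^N$ with basis $\mathfrak{M}_{\<generic>}^N$. Therefore $\pi(\langle \mathfrak{B}_N \rangle) = \mathcal{M}_{\<generic>}^N$, and since $\pi$ is linear,
\begin{equs}
\dim \langle \mathfrak{B}_N \rangle \;\geq\; \dim \pi(\langle \mathfrak{B}_N \rangle) \;=\; \mathrm{Card}(\mathfrak{M}_{\<generic>}^N),
\end{equs}
which is the desired inequality.

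The potential obstacle is purely notational: one must keep careful track of the fact that the shapes of iterated $\nabla$'s appearing in $\mathfrak{B}_N$ match the shapes of iterated Novikov products on one generator, so that the freeness statement from \cite{DL} can be applied directly. The relations \eqref{eq:extra_relations} illustrate why one cannot hope for equality of dimensions in general, only a lower bound, which is all that is required here; the matching upper bound will be established separately in Proposition~\ref{upper_bound_dimension}.
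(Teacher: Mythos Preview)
Your argument is correct in substance and follows the same overall strategy as the paper: both use the projection $\pi$ killing the $z_{(\<derivative2>,k)}$ variables and show that $\pi(\langle\mathfrak{B}_N\rangle)=\mathcal{M}_{\<generic>}^N$, from which the dimension bound follows. The difference lies in how that surjectivity is obtained. The paper builds an explicit right inverse $\Lambda\colon\mathcal{M}_{\<generic>}^N\to\langle\mathfrak{B}_N\rangle$ by passing through decorated trees: it defines $\tilde\psi$ on $\mathcal{B}_{\<generic>}$ via $\tilde\psi(\tau\curvearrowright\sigma)=\nabla_{\tilde\psi(\tau)}\tilde\psi(\sigma)$, averages over the $\Psi$-fibre of each multi-index, and checks $\pi\circ\Lambda=\mathrm{id}$ directly. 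You instead invoke the identification of $(\mathcal{M}_{\<generic>},\,v_1 D v_2)$ with the free Novikov algebra on one generator from \cite{DL,BD23}, so that iterated products of the generator automatically span each graded piece. Your route is shorter and more conceptual once one accepts that black box; the paper's route is more self-contained and makes the tree--multi-index correspondence explicit, which ties in with the rest of the argument in Section~\ref{Sec::2}.

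One small imprecision to fix: you state the identity $\pi(\nabla_{v_1}v_2)=\pi(v_1)\,D\,\pi(v_2)$ only for $v_1,v_2\in\mathcal{M}_{\<generic>}$, but to conclude that $\pi$ sends iterated $\nabla$'s to iterated Novikov products you need it for $v_1,v_2\in\mathcal{M}_{\<derivative2>,\<generic>}$, since after the first step the arguments already carry $z_{(\<derivative2>,0)}$ factors. The identity does hold in that generality because $\pi$ is an algebra homomorphism, $D$ maps the ideal generated by the $z_{(\<derivative2>,k)}$ into itself, and hence $\pi\circ D=D\circ\pi$ on all of $\mathcal{M}_{\<derivative2>,\<generic>}$; you should state it that way.
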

\begin{proof}
By construction of  $\mathfrak{B}_N$, the linear vector space  generated by $\mathfrak{B}_N$ is a subspace of $V_{\geo}^N$. We denote it by $W^N$ and we will use the additional notation $W=\oplus_{N=2}^{\infty} W^N$ . To compute  its dimension we will simply show the existence of an injective linear map $\Lambda\colon \mathcal{M}_{\<generic>}\to W$ which satisfies $\Lambda\colon \mathcal{M}_{\<generic>}^N\to W^N$ and when we compose it with the canonical projection 
 $ \pi \colon \mathcal {M}_{ \<derivative2>, \<generic>}\to \mathcal{M}_{\<generic>} $ one has $\pi \circ \Lambda=\text{id}$. For any given $ z^{\beta} \in  \mathfrak{M}_{\<generic>}$ we then define
\begin{equs} \label{def_psi}
	\Lambda( z^{\beta} ) = \frac{1}{C_{\beta}} \sum_{z^{\beta} = \Psi(\tau)}  \tilde{\psi}(\tau)\,, \quad \mathrm{Card}(\lbrace   \tau : z^{\beta} = \Psi(\tau) \rbrace)\,,
	\end{equs}
where the map $ \tilde{\psi}\colon \mathcal{B}_{ \<generic>} \oplus \langle \<generic>\rangle\to W\oplus \langle z_{(\<generic>,0)}\rangle $ is defined recursively on decorated trees by setting
\begin{equs} \label{def_tilde_psi}
	\tilde{\psi}(\<generic>) = z_{(\<generic>,0)} \,,\quad \tilde{\psi}( \tau \curvearrowright \sigma) = \Nabla_{\tilde{\psi}( \tau)} \tilde{\psi}( \sigma)\,,
\end{equs}
where the  grafting product $ \curvearrowright $ between two trees  $ \sigma $ and $ \tau $ is defined as
\begin{equs}
	\sigma \curvearrowright \tau = \sum_{u \in N_{\tau}} \sigma \curvearrowright_u \tau
\end{equs} 
with$ N_{\tau} $ the nodes of $\tau$ and $ \curvearrowright_u $ connects the root of $ \sigma $ via a new blue edge to the node $u$. As an example, one has
\begin{equs}
	\<Xi2> \curvearrowright	\<Xi2> = \<Xi4> + \<Xi4e>.
\end{equs}
The map $\tilde{\psi}$ is well defined by \eqref{def_tilde_psi} because the operation $ \curvearrowright $ uniquely generates the vector space of trees $\mathcal{B}_{ \<generic>} \oplus \langle \<generic>\rangle$, see \cite{ChaLiv}. Moreover, since the map $\Psi$ contains a sum of trees with     $((\beta))-1$ grafting operations
the map $\Lambda$ trivially factorises into a map $\Lambda\colon \mathcal{M}_{\<generic>}^N\to W^N$ for any $N\geq 2$.  Applying the projection $\pi$ we obtain for any $z^{\beta}\in  \mathfrak{M}_{\<generic>}$
\begin{equs}
		\pi(\Lambda( z^{\beta} )) = \frac{1}{C_{\beta}} \sum_{z^{\beta} = \Psi(\tau)} \pi( \tilde{\psi}(\tau))\,.
\end{equs}
Since for any  $ v_1, v_2\in  \mathcal {M}_{ \<generic>}$ one has 
$\pi \left(  \Nabla_{v_1} v_2  \right) = v_1 D v_2$, we can easily check the identities
\[
	(\pi\circ\tilde{\psi}(\<generic>) )=\Psi(\<generic>) \,,\quad (\pi\circ \tilde{\psi})( \tau \curvearrowright \sigma)= \tilde{\psi}( \tau) D\tilde{\psi}( \sigma)=\Psi(\tau \curvearrowright \sigma)\,.
\]
from which we conclude that $\pi\circ\tilde{\psi}= \Psi$ and we obtain

\begin{equs}
		\pi(\Lambda( z^{\beta} ))  = \frac{1}{C_{\beta}} \sum_{z^{\beta} = \Psi(\tau)}\Psi(  \tau) = z^{\beta}.
\end{equs}
\end{proof}

\begin{remark}
	The proof is an adaptation of the proof of \cite[Prop. 6.13]{BGHZ}. The main difference is the introduction of the use of the map $ \Psi $ for transferring the argument to the context of multi-indices.
	\end{remark}

\begin{example} \label{example_1}
Thanks to the previous theorem we can find a lower bound   for the dimension of the vector space generated by $\mathfrak{B}_{4} $ by looking at Table~\ref{tab:kpz_relevant}. We want to check that  the family  $ \tilde{\mathfrak{B}}_4 =   \left \lbrace	\Nabla_{\<generic>} 	\Nabla_{\<generic>} 	\Nabla_{\<generic>}  \<generic>, \, 	\Nabla_{\<generic>} 	\Nabla_{\Nabla_{\<generic>}  \<generic> }   \<generic>, \,
\Nabla_{\Nabla_{\Nabla_{\<generic>} \<generic>}   \<generic> }   \<generic> \right \rbrace $ is linearly independent. By projecting the terms with $\pi$ we obtain
\begin{equs}
	\pi \Nabla_{\<generic>} 	\Nabla_{\<generic>} 	\Nabla_{\<generic>}  \<generic> & = 
\pi	\Nabla_{\<generic>} 	\Nabla_{\<generic>}  \left( z_{ (\<generic>,0)} z_{ (\<generic>,1)}  
	\right)
	\\ &= \pi	\Nabla_{\<generic>} 	 \left( z_{ (\<generic>,0)}^2 z_{ (\<generic>,2)}  +  z_{ (\<generic>,0)} z_{ (\<generic>,1)}^2 \right) 
	\\ &= z_{ (\<generic>,0)}^3 z_{ (\<generic>,3)} + 4 z_{ (\<generic>,0)}^2 z_{ (\<generic>,1)} z_{ (\<generic>,2)} +  z_{ (\<generic>,0)} z_{ (\<generic>,1)}^3  \,, 
	\\
	\pi\Nabla_{\Nabla_{\Nabla_{\<generic>} \<generic>}   \<generic> }    \<generic> = 	z_{ (\<generic>,1)} \pi \Nabla_{\Nabla_{\<generic>} \<generic>}   \<generic>  & =   z_{ (\<generic>,1)}^2 \pi \Nabla_{\<generic>} \<generic>   = z_{ (\<generic>,0)}  z_{ (\<generic>,1)}^3\,,
\\
\pi	\Nabla_{\<generic>} 	\Nabla_{\Nabla_{\<generic>}  \<generic> }   \<generic> = \pi	\Nabla_{\<generic>} \left( z_{ (\<generic>,0)}  z_{ (\<generic>,1)}^2 \right) & = 2 z_{ (\<generic>,0)}^2 z_{ (\<generic>,1)} z_{ (\<generic>,2)}  + z_{ (\<generic>,0)}  z_{ (\<generic>,1)}^3\,.
\end{equs}
Due to the triangular structure in the terms $z_{ (\<generic>,0)}^3 z_{ (\<generic>,3)}, z_{ (\<generic>,0)}^2 z_{ (\<generic>,1)} z_{ (\<generic>,2)}, z_{ (\<generic>,0)}  z_{ (\<generic>,1)}^3$  we can conclude that $ \tilde{\mathfrak{B}}_4 $ is linearly independent and $ \dim( \langle \mathfrak{B}_4 \rangle ) \geq 3 $.
\end{example}
From these considerations we immediately obtain $\dim(V_{\geo}^N) \geq \mathrm{Card}(\mathfrak{M}_{\<generic>}^N)$. However, combining Example~\ref{example_1}  with the relations \eqref{eq:extra_relations} one obtains indeed $ \dim( \langle \mathfrak{B}_4 \rangle ) =  3= \mathrm{Card}(\mathfrak{M}_{\<generic>}^4)$. One of our main results is that this phenomenon is indeed optimal and we obtain a full description of $V_{\geo}^N$. 
\begin{theorem}  \label{dim_geo}
	For any $N\geq 2$ the linear vector space generated by $\mathfrak{B}_N$ coincides with	$V_{\geo}^N$ and $\dim(V_{\geo}^N) = \mathrm{Card}(\mathfrak{M}_{\<generic>}^N)$. Moreover the sets $\mathfrak{B}_{\xi} $ , $\mathfrak{B}_{\xi}^{\text{G}}$ are generating sets for $V_{\geo}^{\xi}$ and $V_{\geo}^{\xi, \text{G}}$ whose dimensions are given respectively by
	\[ \sum_{i=2}^{N_\xi}\mathrm{Card}(\mathfrak{M}_{\<generic>}^i)\,, \quad \sum_{i=1}^{\lfloor N_\xi/2\rfloor}\mathrm{Card}(\mathfrak{M}_{\<generic>}^{2i})\,.\]
\end{theorem}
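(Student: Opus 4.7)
\textbf{Proof plan for Theorem~\ref{dim_geo}.}
The strategy is to combine the lower bound $\dim V_{\geo}^N \geq \mathrm{Card}(\mathfrak{M}_{\<generic>}^N)$ already furnished by Proposition~\ref{lower_bound_dimension} with a matching upper bound. Writing $\mathcal{M}_{\<derivative2>,\<generic>}^N = \mathcal{M}_{\<generic>}^N \oplus \langle \mathfrak{M}_{\<derivative2>,\<generic>}^{N,*}\rangle$ where $\mathfrak{M}_{\<derivative2>,\<generic>}^{N,*} := \mathfrak{M}_{\<derivative2>,\<generic>}^N \setminus \mathfrak{M}_{\<generic>}^N$ collects the monomials containing at least one factor of the form $z_{(\<derivative2>,k)}$, and recalling from Theorem~\ref{geo_chain_rule} that $V_{\geo}^N = \ker \hat{\phi}_{\geo}$, this upper bound is equivalent via rank-nullity to the injectivity of $\hat{\phi}_{\geo}$ restricted to $\langle \mathfrak{M}_{\<derivative2>,\<generic>}^{N,*}\rangle$. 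Once both inequalities are in place, $V_{\geo}^N = \langle \mathfrak{B}_N\rangle$ follows automatically.

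To establish this injectivity I would set up a triangular system graded by the $\<derivative2>$-counting index $p(\beta):=\sum_{k\geq 0}\beta(\<derivative2>,k)$. For each $\beta \in \mathfrak{M}_{\<derivative2>,\<generic>}^{N,*}$ put $k_0(\beta):=\max\{k:\beta(\<derivative2>,k)>0\}$ and consider the distinguished monomial
\begin{equs}
L(\beta) \, :=\, z^{\beta - e_{(\<derivative2>,k_0(\beta))}}\, z_{(\begin{tikzpicture}[scale=0.2,baseline=-2]\coordinate (root) at (0,0);\node[diff] (rootnode) at (root) {};\end{tikzpicture},\,k_0(\beta)+2)}\,.
\end{equs}
The assignment $\beta \mapsto L(\beta)$ is plainly injective, and the subterm $-2z_{(\begin{tikzpicture}[scale=0.2,baseline=-2]\coordinate (root) at (0,0);\node[diff] (rootnode) at (root) {};\end{tikzpicture},k_0+2)}$ in the defining formula of $\phi_{\geo}(z_{(\<derivative2>,k_0)})$ yields, via Leibniz, the coefficient $-2\beta(\<derivative2>,k_0(\beta)) \neq 0$ for $L(\beta)$ inside $\hat{\phi}_{\geo}(z^\beta)$. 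The core structural claim to be verified is that $L(\beta_0)$ appears in $\hat{\phi}_{\geo}(z^\beta)$ for $\beta \neq \beta_0$ only if $p(\beta) = p(\beta_0)-1$: indeed every other Leibniz branch, coming either from $-D^{k^*+1}(z_{(\<derivative2>,0)}z_{(\begin{tikzpicture}[scale=0.2,baseline=-2]\coordinate (root) at (0,0);\node[diff] (rootnode) at (root) {};\end{tikzpicture},0)})$ inside $\phi_{\geo}(z_{(\<derivative2>,k^*)})$ or from $D^{k^*}[z_{(\<generic>,0)},z_{(\begin{tikzpicture}[scale=0.2,baseline=-2]\coordinate (root) at (0,0);\node[diff] (rootnode) at (root) {};\end{tikzpicture},0)}]$ inside $\phi_{\geo}(z_{(\<generic>,k^*)})$, conserves $p(\beta)$, whereas the distinguished summand $-2z_{(\begin{tikzpicture}[scale=0.2,baseline=-2]\coordinate (root) at (0,0);\node[diff] (rootnode) at (root) {};\end{tikzpicture},k^*+2)}$ decreases it by exactly one. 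The commutator correction $-[z^\beta,z_{(\begin{tikzpicture}[scale=0.2,baseline=-2]\coordinate (root) at (0,0);\node[diff] (rootnode) at (root) {};\end{tikzpicture},0)}]$ only produces terms carrying a $\begin{tikzpicture}[scale=0.2,baseline=-2]\coordinate (root) at (0,0);\node[diff] (rootnode) at (root) {};\end{tikzpicture}$-index $\leq 1$, which is irrelevant since $k_0(\beta_0)+2\geq 2$.

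Armed with this claim, a vanishing combination $\sum_{\beta\in S}\lambda_\beta \hat{\phi}_{\geo}(z^\beta)=0$ with nonempty support $S\subset \mathfrak{M}_{\<derivative2>,\<generic>}^{N,*}$ is disposed of by picking $\beta_0\in S$ of minimal $p$-value: only $\beta=\beta_0$ contributes to the coefficient of $L(\beta_0)$, forcing $-2\beta_0(\<derivative2>,k_0(\beta_0))\lambda_{\beta_0}=0$, which contradicts $\beta_0 \in S$. The remaining assertions of the theorem follow immediately: $V_{\geo}^\xi = \oplus_{N=2}^{N_\xi} V_{\geo}^N$ decomposes into graded pieces in $((\cdot))$ so that $\mathfrak{B}_\xi = \cup_{N=2}^{N_\xi}\mathfrak{B}_N$ generates $V_{\geo}^\xi$ with the claimed dimension, while the even counterpart is obtained by restricting to even $N$, using that the covariant derivative $\nabla$ preserves the value of $((\cdot))$ and hence its parity.

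The principal technical obstacle is the case-by-case verification of the structural claim, ensuring that none of the branches of $\phi_{\geo}$ other than the distinguished one produces $L(\beta_0)$ with $p(\beta)=p(\beta_0)$. In particular one must rule out branches extracting a $\begin{tikzpicture}[scale=0.2,baseline=-2]\coordinate (root) at (0,0);\node[diff] (rootnode) at (root) {};\end{tikzpicture}$-factor of large index $k_0(\beta_0)+2$ from a $z_{(\<generic>,k^*)}$ with $k^*\geq k_0(\beta_0)+1$, which a priori could jeopardise the triangular structure; the matching of non-$\begin{tikzpicture}[scale=0.2,baseline=-2]\coordinate (root) at (0,0);\node[diff] (rootnode) at (root) {};\end{tikzpicture}$ parts then forces $\beta$ to have lost one $z_{(\<derivative2>,k_0(\beta_0))}$ against a $z_{(\<generic>,\cdot)}$, so that $p(\beta)=p(\beta_0)-1$. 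Grading by $p(\beta)$, rather than by coarser natural quantities such as $k_{\max}(\beta)$ or the maximum $\begin{tikzpicture}[scale=0.2,baseline=-2]\coordinate (root) at (0,0);\node[diff] (rootnode) at (root) {};\end{tikzpicture}$-index appearing in the image, is the essential insight making the triangular system strict and accounts for the drastic simplification over the tree-based argument of \cite{BGHZ} and the operadic homological route of \cite{BD24}.
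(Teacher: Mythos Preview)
Your proof is correct and follows the same overall strategy as the paper: combine the lower bound of Proposition~\ref{lower_bound_dimension} with an upper bound obtained from a triangular system driven by the distinguished summand $-2\,z_{(\begin{tikzpicture}[scale=0.2,baseline=-2]\coordinate (root) at (0,0);\node[diff] (rootnode) at (root) {};\end{tikzpicture},k+2)}$ in $\phi_\geo(z_{(\<derivative2>,k)})$. The paper's Proposition~\ref{upper_bound_dimension} writes out the equations $\langle z_{(\begin{tikzpicture}[scale=0.2,baseline=-2]\coordinate (root) at (0,0);\node[diff] (rootnode) at (root) {};\end{tikzpicture},k+2)}z^\eta,\hat\phi_\geo(v)\rangle=0$ and orders them by decreasing $k$; your reformulation via the grading $p(\beta)=\sum_k\beta(\<derivative2>,k)$ and the leading term $L(\beta)$ makes the triangular structure more transparent (every non-distinguished Leibniz branch conserves $p$, the distinguished one lowers it by one), but the underlying mechanism is identical.

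Two minor phrasing points that do not affect correctness: (i) ``equivalent via rank-nullity'' should be ``sufficient'' --- injectivity of $\hat\phi_\geo$ on $\langle\mathfrak{M}_{\<derivative2>,\<generic>}^{N,*}\rangle$ implies $\mathrm{rank}\,\hat\phi_\geo\ge\dim\langle\mathfrak{M}_{\<derivative2>,\<generic>}^{N,*}\rangle$ and hence $\dim\ker\le\mathrm{Card}(\mathfrak{M}_{\<generic>}^N)$, but the converse is not immediate; (ii) the covariant derivative does not \emph{preserve} $((\cdot))$ but is \emph{additive} in it, which is what you actually need for $\mathfrak{B}_N\subset\mathcal{M}_{\<derivative2>,\<generic>}^N$.
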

The proof of this result follows trivially from the following Proposition  to find an upper bound on the dimension of $V_{\geo}^N$.

\begin{proposition} \label{upper_bound_dimension}
For any $N\geq 2$ one has $\dim(V_{\geo}^N) \leq \mathrm{Card}(\mathfrak{M}_{\<generic>}^N)$.
\end{proposition}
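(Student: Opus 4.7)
The plan is to prove that the canonical projection $\pi\colon \mathcal{M}_{\<derivative2>,\<generic>}^N \to \mathcal{M}_{\<generic>}^N$, which annihilates every monomial containing a factor $z_{(\<derivative2>,k)}$, is injective when restricted to $V_{\geo}^N$; since $\dim \mathcal{M}_{\<generic>}^N = \mathrm{Card}(\mathfrak{M}_{\<generic>}^N)$, this delivers the desired bound. By Theorem~\ref{geo_chain_rule} one has $V_{\geo}^N = \ker \hat{\phi}_\geo|_{\mathcal{M}_{\<derivative2>,\<generic>}^N}$, so the task reduces to proving that $\hat{\phi}_\geo$ is injective on the subspace $\mathcal{S}^N \subset \mathcal{M}_{\<derivative2>,\<generic>}^N$ spanned by monomials with at least one factor $z_{(\<derivative2>,k)}$.

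To achieve this, I introduce the $X$-degree $\deg_X(z^\beta) := \sum_{k \geq 0} \beta(\<derivative2>,k)$ and decompose $\hat{\phi}_\geo = A + B$, where $A$ preserves $\deg_X$ and $B$ lowers it by exactly one. A direct inspection of Definition~\ref{def:injectivity_Upsilon} together with the identity~\eqref{leibniz_calculus} shows that the only source of $\deg_X$-lowering is the term $-2\,z_{(\begin{tikzpicture}[scale=0.2,baseline=-2]\coordinate (root) at (0,0);\node[diff] (rootnode) at (root) {};\end{tikzpicture},k+2)}$ appearing in $\phi_\geo(z_{(\<derivative2>,k)})$, and applying the Leibniz rule yields the explicit formula
\[
B(z^\beta) \;=\; -2 \sum_{k \geq 0} \beta(\<derivative2>,k)\, z^{\beta - e_{(\<derivative2>,k)}}\, z_{(\begin{tikzpicture}[scale=0.2,baseline=-2]\coordinate (root) at (0,0);\node[diff] (rootnode) at (root) {};\end{tikzpicture},k+2)};
\]
all remaining contributions---those coming from $-D^{k+1}(z_{(\<derivative2>,0)} z_{(\begin{tikzpicture}[scale=0.2,baseline=-2]\coordinate (root) at (0,0);\node[diff] (rootnode) at (root) {};\end{tikzpicture},0)})$, from $\phi_\geo(z_{(\<generic>,l)}) = D^l[z_{(\<generic>,0)},z_{(\begin{tikzpicture}[scale=0.2,baseline=-2]\coordinate (root) at (0,0);\node[diff] (rootnode) at (root) {};\end{tikzpicture},0)}]$, and from the bracket $-[z^\beta,z_{(\begin{tikzpicture}[scale=0.2,baseline=-2]\coordinate (root) at (0,0);\node[diff] (rootnode) at (root) {};\end{tikzpicture},0)}]$---preserve $\deg_X$ and are gathered into $A$.

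The pivotal lemma is that $B$ restricted to the subspace $\mathcal{S}^{N,d}$ of monomials with fixed $\deg_X = d \geq 1$ is injective. Indeed, for any $\beta$ with $\deg_X(\beta) = d$, each target monomial $\gamma = z^{\beta - e_{(\<derivative2>,k)}} z_{(\begin{tikzpicture}[scale=0.2,baseline=-2]\coordinate (root) at (0,0);\node[diff] (rootnode) at (root) {};\end{tikzpicture},k+2)}$ appearing in $B(z^\beta)$ contains a single $\begin{tikzpicture}[scale=0.2,baseline=-2]\coordinate (root) at (0,0);\node[diff] (rootnode) at (root) {};\end{tikzpicture}$-variable (no such variable appears in any $z^\beta \in \mathcal{M}_{\<derivative2>,\<generic>}^N$), and from its index one immediately reads off both $k$ and then $\beta = \gamma - e_{(\begin{tikzpicture}[scale=0.2,baseline=-2]\coordinate (root) at (0,0);\node[diff] (rootnode) at (root) {};\end{tikzpicture},k+2)} + e_{(\<derivative2>,k)}$. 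Hence the supports of $B(z^\beta)$ and $B(z^{\beta'})$ are disjoint whenever $\beta \neq \beta'$, and since each $B(z^\beta)$ is a non-zero combination with coefficients $-2\beta(\<derivative2>,k) \neq 0$, linear independence of the family $\{B(z^\beta)\}_{\beta \in \mathfrak{M}_{\<derivative2>,\<generic>}^{N,d}}$ follows immediately.

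The argument then concludes by an ascending induction on $\deg_X$. Take $v \in \mathcal{S}^N$ with $\hat{\phi}_\geo(v) = 0$ and decompose $v = \sum_{d \geq 1} v_d$ with $v_d \in \mathcal{S}^{N,d}$, a finite sum. Extracting the $\deg_X = d-1$ homogeneous component of the identity $\hat{\phi}_\geo(v) = 0$ gives $A(v_{d-1}) + B(v_d) = 0$ for every $d \geq 1$, with the convention $v_0 = 0$; at $d = 1$ this reads $B(v_1) = 0$, which by the lemma forces $v_1 = 0$, and the same argument propagates through all higher values of $d$. The whole argument is essentially elementary linear algebra once the decomposition $\hat{\phi}_\geo = A + B$ is in place, and the main simplification over the tree-based approach of \cite{BGHZ} is precisely that the absence of $\begin{tikzpicture}[scale=0.2,baseline=-2]\coordinate (root) at (0,0);\node[diff] (rootnode) at (root) {};\end{tikzpicture}$-variables in $\mathcal{M}_{\<derivative2>,\<generic>}^N$ makes the disjoint-support property of $B$ entirely transparent.
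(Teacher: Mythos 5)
Your proof is correct, and it is a genuinely cleaner organisation of the same underlying mechanism as the paper's argument. Both proofs rest on the same pivot: the term $-2\,z_{(\<diff>,k+2)}$ in $\phi_\geo(z_{(\<derivative2>,k)})$ is the only contribution to $\hat{\phi}_\geo$ that removes a factor $z_{(\<derivative2>,k)}$, and it is what pins down the coefficients of the $\<derivative2>$-containing monomials. The paper exploits this by pairing $\hat{\phi}_\geo(v)=0$ against dual monomials $z_{(\<diff>,k+2)}z^{\eta}$, enumerating all possible ``remainder decompositions'' together with their explicit binomial coefficients, and ordering the resulting equations by the index $k$ to exhibit a triangular system. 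You instead grade by the number $d$ of $\<derivative2>$-factors, split $\hat{\phi}_\geo=A+B$ with $A$ degree-preserving and $B$ degree-lowering by one, and observe that $B$ is injective on each graded piece with $d\geq 1$ because the images $B(z^{\beta})$ have pairwise disjoint supports: each target monomial carries exactly one perturbation variable, whose index recovers $k$ and hence $\beta$. The ascending induction on $d$ then never requires computing $A$ at all, which is precisely where the paper's bookkeeping of cross-terms and coefficients lives; your argument also isolates the cleaner intermediate statement that the canonical projection $\pi$ onto $\mathcal{M}_{\<generic>}^N$ is injective on $V_{\geo}^N$, which combined with Proposition~\ref{lower_bound_dimension} shows that $\pi$ restricts to an isomorphism onto $\mathcal{M}_{\<generic>}^N$. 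The only point worth making fully explicit is that the target space decomposes as a direct sum over the $\<derivative2>$-degree, so that $\hat{\phi}_\geo(v)=0$ really does split into the componentwise identities $A(v_{d-1})+B(v_d)=0$; this is immediate from your description of $A$ and $B$, but it is the hinge on which the induction turns.
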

\begin{proof}
	Let $  v \in V_{\geo}^N $. Due to the fact that $ v $ is geometric, one has $	\hat{\phi}_\geo(v) = 0$ and for any $z^{\tilde{\beta}}\in \mathfrak{M}_{\begin{tikzpicture}[scale=0.2,baseline=-2]
		\coordinate (root) at (0,0);
		\node[diff] (rootnode) at (root) {};
	\end{tikzpicture}, \<derivative2>, \<generic>}^N$ we have $ 
	\langle z^{\tilde{\beta}}, \hat{\phi}_\geo(v)\rangle =0$. Writing then $v$ in coordinates we obtain
\begin{equs} \label{main_equation_system}
	\sum_{z^{\beta} \in \, \mathfrak{M}_{ \<derivative2>, \<generic>}^N} \langle z^{\beta} , v  \rangle \langle z^{\tilde{\beta}}, \hat{\phi}_\geo(z^{\beta})  \rangle = 0\,,
\end{equs}
where we keep only the terms such that $\langle z^{\tilde{\beta}}, \hat{\phi}_\geo(z^{\beta})  \rangle \neq 0$. Suppose now that $ z^{\tilde{\beta}} $ is of the form $	z^{\tilde{\beta}} =  z_{ (\begin{tikzpicture}[scale=0.2,baseline=-2]
			\coordinate (root) at (0,0);
			\node[diff] (rootnode) at (root) {};
		\end{tikzpicture},k +2)} z^{\eta}$
with $k\geq 0$ and $z^{\eta}$ not containing any other variable of the form $z_{ (\begin{tikzpicture}[scale=0.2,baseline=-2]
			\coordinate (root) at (0,0);
			\node[diff] (rootnode) at (root) {};
		\end{tikzpicture},l)}$. We will refer to  $z^{\eta}$ as a cofactor. By definition of $\hat{\phi}_\geo$, $ z^{\beta} $ has to be of one of the following ``cofactor decompositions" (we list them using different notations on  $\beta$):
		\begin{itemize}
		\item $z^{\beta_0} = z_{( \<derivative2>,k)} z^{\eta} $;
		\item  $ z^{\hat{\beta}_0} = z_{ (\<generic>,k+1)} z^{\hat{\eta}_0}$ with  $ z_{ (\<generic>,0)} z^{\hat{\eta}_0} = z^{\eta} $;
		\item  for some $ m >0 $ one has
		$
 z^{\beta_m} = z_{( \<derivative2>,k+m)} z^{\eta_m}$ with    
$ z_{( \<derivative2>,m-1)} z^{\eta_m}   =   z^{\eta}$;
	\item  for some $ m >1 $ one has
		$
  z^{\hat{\beta}_m} = z_{( \<generic>,k+m)}  z^{\hat{\eta}_m}$ with    
$ z_{ (\<generic>,m-1)} z^{\hat{\eta}_m}   =   z^{\eta}$,\end{itemize}
where of course we do not exclude that the remainders $z^{\eta}$, $z^{\hat{\eta}_0}$, $z^{\eta_m}$, $z^{\hat{\eta}_m}$
might contain several repetitions of the monomials 
\[z_{( \<derivative2>,k)}\,, \quad z_{ (\<generic>,k+1)}\,,\quad z_{( \<derivative2>,k+m)}\,, \quad z_{( \<generic>,k+m)}\,,\]
whose number is included in the corresponding multi-index. We also remark that the condition  $z^{\beta_0} = z_{( \<derivative2>,k)} z^{\eta}$ might be satisfied by several $z^{\beta_0}\in \mathfrak{M}_{ \<derivative2>, \<generic>}^N$.  Then, one can see that multi-indices containing a factor $z_{( \<derivative2>,k)}$  are connected with multi-indices of the form $z_{( \<derivative2>,k')}$ with a stricly smaller index $k'$. Indeed, from the definition  of $ \hat{\phi}_\geo $  one has 
\begin{equs}
	\hat{\phi}_\geo(z^{\beta_0} ) & = \beta_0(\<derivative2>,k) \ \phi_\geo( z_{( \<derivative2>,k)} )  z^{\eta}   + (\cdots)
	= -2 \beta_0(\<derivative2>,k) \ z_{ (\begin{tikzpicture}[scale=0.2,baseline=-2]
			\coordinate (root) at (0,0);
			\node[diff] (rootnode) at (root) {};
		\end{tikzpicture},k+2)}
	z^{\eta}   + (\cdots)
	\\ \hat{\phi}_\geo(z^{\hat{\beta}_0}) 
	&  = \hat{\beta}_0(\<generic>,k+1) \phi_\geo(z_{(\<generic>,k+1)})  z^{\hat{\eta}_0}  + (\cdots)
	\\ & =  \hat{\beta}_0(\<generic>,k+1)  z_{ (\begin{tikzpicture}[scale=0.2,baseline=-2]
			\coordinate (root) at (0,0);
			\node[diff] (rootnode) at (root) {};
		\end{tikzpicture},k+2)} z_{(\<generic>,0)}
	z^{\hat{\eta}_0}   + (\cdots)
	\\  & =  \hat{\beta}_0(\<generic>,k+1)  z_{ (\begin{tikzpicture}[scale=0.2,baseline=-2]
			\coordinate (root) at (0,0);
			\node[diff] (rootnode) at (root) {};
		\end{tikzpicture},k+2)} 
	z^{\eta}   + (\cdots)
	\end{equs}
In both computations, the symbol $(\dots)$ denotes extra terms that will not be equal to  $ z_{ (\begin{tikzpicture}[scale=0.2,baseline=-2]
		\coordinate (root) at (0,0);
		\node[diff] (rootnode) at (root) {};
	\end{tikzpicture},k+2)} z^{\eta}$ and the coefficients  $\hat{\beta}_0(\<generic>,k+1)$. The computations for $ z_{( \<derivative2>,k+m)} $ and $ z_{( \<generic>,k+m)} $  are a bit more involved and  they include \eqref{leibniz_calculus} obtaining
	\begin{equs}
		\hat{\phi}_\geo(z^{\beta_m}) 
	&  = \beta_m(\<derivative2>,k+m) \phi_\geo(z_{( \<derivative2>,k+m)} ) z^{\eta_m} + (\cdots)
	\\ & =  -\beta_m(\<derivative2>,k+m) \binom{k+m+1}{k+2}  z_{ (\begin{tikzpicture}[scale=0.2,baseline=-2]
			\coordinate (root) at (0,0);
			\node[diff] (rootnode) at (root) {};
		\end{tikzpicture},k+2)} z_{( \<derivative2>,m-1)} z^{\eta_m}
	   + (\cdots)
	   	\\ & =  -\beta_m(\<derivative2>,k+m) \binom{k+m+1}{k+2}  z_{ (\begin{tikzpicture}[scale=0.2,baseline=-2]
	   		\coordinate (root) at (0,0);
	   		\node[diff] (rootnode) at (root) {};
	   	\end{tikzpicture},k+2)}  z^{\eta}
	   + (\cdots)
\\	   \hat{\phi}_\geo(z^{\hat{\beta}_m} ) 
	   &  = \hat{\beta}_m(\<generic>,k+m) \phi_\geo(z_{(\<generic>,k+m)})  z^{\hat{\eta}}  + (\cdots)
	   \\ & =  \hat{\beta}_m(\<generic>,k+m) \frac{(k+m)!(k-m+3)}{(m-1)!(k+2)!}  z_{ (\begin{tikzpicture}[scale=0.2,baseline=-2]
	   		\coordinate (root) at (0,0);
	   		\node[diff] (rootnode) at (root) {};
	   	\end{tikzpicture},k+2)} z_{ (\<generic>,m-1)} z^{\hat{\eta}_m} \\ &   =   \hat{\beta}_m(\<generic>,k+m) \frac{(k+m)!(k-m+3)}{(m-1)!(k+2)!} z_{ (\begin{tikzpicture}[scale=0.2,baseline=-2]
	   		\coordinate (root) at (0,0);
	   		\node[diff] (rootnode) at (root) {};
	   	\end{tikzpicture},k+2)} 
	   z^{\eta}   + (\cdots)
\end{equs}
Combining the previous computations and testing  \eqref{main_equation_system} with $	z^{\tilde{\beta}} =  z_{ (\begin{tikzpicture}[scale=0.2,baseline=-2]
			\coordinate (root) at (0,0);
			\node[diff] (rootnode) at (root) {};
		\end{tikzpicture},k +2)} z^{\eta}$ one gets for any $z^{\eta}$ the identity
\begin{equs} \label{main_system_equ}
	\begin{aligned}
& -2\sum_{z^{\beta_0} = z_{( \<derivative2>,k)} z^{\eta} }	\beta_0(\<derivative2>,k)\,  \langle z^{\beta_0}, v  \rangle \\&- \sum_{m>0}  \beta_m(\<derivative2>,k+m)\binom{k+m+1}{k+2}  \, \langle z^{\beta_m}, v \rangle \\ &  +  \hat{\beta}_0(\<generic>,k+1)\, \langle z^{\hat{\beta}_0}, v \rangle \\& +\sum_{m>1} \hat{\beta}_m(\<generic>,k+m) \frac{(k+m)!(k-m+3)}{(m-1)!(k+2)!}\langle z^{\hat{\beta}_m} , v \rangle  = 0,
\end{aligned}
\end{equs}
where in the first sum we contain  all possible repetitions in the cofactor decompositions inside $\mathfrak{M}_{ \<derivative2>, \<generic>}^N$. The sums over $m$ are  finite because of the constraint on $N$  (i.e. $\beta_m(\<derivative2>, \ell) = 0$ for all $ \ell > N-2 $ and $\hat{\beta}_m(\<generic>, \ell)=0 $ for all $ \ell > N $). 

By writing down the identity \eqref{main_system_equ} starting from  $z^{\tilde{\beta}}=z_{ (\begin{tikzpicture}[scale=0.2,baseline=-2]
			\coordinate (root) at (0,0);
			\node[diff] (rootnode) at (root) {};
		\end{tikzpicture},N)}z^{\eta} $ down to  the term $z_{ (\begin{tikzpicture}[scale=0.2,baseline=-2]
			\coordinate (root) at (0,0);
			\node[diff] (rootnode) at (root) {};
		\end{tikzpicture},2)} z^{\eta}$ with all possible cofactors $z^{\eta}$, one obtains a system of linear equations in the coefficients $\langle z^{\beta}, v\rangle$. The structure of the identity shows that the terms $z^{\beta}$ containing monomials of the form $z_{(\<derivative2>,\ell)}$, $\ell=0,\ldots,N-2$, can be expressed in a triangular way in terms of monomials belonging only to $\mathfrak{M}^N_{\<generic>}$. 
		
		To make this precise, for any multi-index $z^\beta$ define
		\[ [[\beta]]= \max\{ l \colon \beta(\<derivative2>,l)\neq 0\}\]
	with the convention that $[[\beta]]=-\infty$ if $\beta(\<derivative2>,\ell)=0$ for all $\ell$. Thus $[[\beta]]$ records the highest derivative level appearing with non–zero multiplicity. We order the monomials in $\mathfrak{M}^N_{\<derivative2>,\<generic>}$ by first decreasing $[[\beta]]$, then by decreasing total degree when $[[\beta]]$ coincides, and arbitrarily among the remaining ones. With this ordering, we claim that the resulting linear system is triangular. 
	
	Indeed, fix a monomial $z^{\beta}$ with $[[\beta]]=k\ge 0$ and $ \beta(\<derivative2>,k)=m$. By definition $z^{\beta}$ can be uniquely written in the form
	\[z^{\beta}=z_{(\<derivative2>,k)}^mz^{\theta}\,,\]
	where $z^{\theta}$ contains no factor $z_{(\<derivative2>,k)}$. Consider now the equation \eqref{main_system_equ} when $z^{\tilde{\beta}}=z_{ (\begin{tikzpicture}[scale=0.2,baseline=-2]
			\coordinate (root) at (0,0);
			\node[diff] (rootnode) at (root) {};
		\end{tikzpicture},k+2)}z^{\theta} $. Then the coefficient in front of $\langle z^{\beta}, v\rangle$
appears in this identity with a non-zero numerical factor depending on  $m$
Moreover, every other coefficient appearing in the same equation corresponds either to a monomial $z^{\gamma}$ where $[[\gamma]]>k$  or $ [[\gamma]]=k$ and higher degree. Therefore, with respect to the chosen ordering, $\langle z^{\beta}, v\rangle$ is the leading unknown in that equation. Since we write the equations starting from $k = N-2$ down to $k = 0$, and within each level we eliminate monomials in decreasing multiplicity, the system can be solved inductively. Every coefficient corresponding to a monomial containing derivative variables is uniquely determined in terms of coefficients corresponding to monomials with $[[\beta]] = -\infty$, i.e.\ elements of $\mathfrak{M}^{N}_{\<generic>}$.
	
	It remains to check that every monomial of $ z^{\beta} \in \mathfrak{M}_{ \<derivative2>, \<generic>}^N $ appears in at least one equation. First, by construction, the system contains all multi-indices with at least one variable of type $ z_{(\<derivative2>,\ell)} $. For the elements of  $ \mathfrak{M}^{ N}_{\<generic>} $, one can notice that by picking the right cofactor  $ z^{\eta}$ not containing any variables of derivative type,  using the different decompositions of the form $ z^{\hat{\beta}_0} = z_{ (\<generic>,k+1)} z^{\hat{\eta}_0}$ and $
  z^{\hat{\beta}_m} = z_{( \<generic>,k+m)}  z^{\hat{\eta}_m}$, one runs over all elements of $ \mathfrak{M}^{ N}_{\<generic>} $.

In the end, one obtains a triangular system for the elements of $ \mathfrak{M}_{\<derivative2>, \<generic>}^N $ that terminates with the variables given by $\mathfrak{M}^{N}_{\<generic>}$, whose cardinality provides an upper bound on the dimension of $V^N_{\geo}$.
\end{proof}

\begin{remark} The proof of Proposition~\ref{upper_bound_dimension} is inspired by a proof from Franck Gabriel  that was part of a preliminary draft of \cite{BGHZ}. The goal of it  was to give an upper bound on the dimension of geometric elements for decorated trees. It is more involved and not so general as one has to determine by hand different orbits. Indeed, one has 
	\begin{equs}
		\hat{\phi}_\geo \left( \begin{tikzpicture}[scale=0.4,baseline=-2]
			\coordinate (root) at (0,-0.4);
			\coordinate (t1) at (-0.3,0.5);
			\coordinate (t2) at (-1.1,0.5);
			\coordinate (t3) at (1.1,0.5);
			\coordinate (t4) at (0.3,0.5);
			\coordinate (tau1) at (1,0.6);
			\draw[kernels2] (t1) -- (root);
			\draw[kernels2] (t2) -- (root);
			\draw[symbols] (t3) -- (root);
				\draw[symbols] (t4) -- (root);
			\node[not] (rootnode) at (root) {};
			\draw (-1.2,0.7) node[] {\tiny$\tau_1$};
			\node[not] (rootnode) at (root) {};
			\draw (1.3,0.7) node[] {\tiny$\tau_4$};
			\draw (-0.4,0.7) node[] {\tiny$\tau_{\tiny{2}}$};
			\draw (0.4,0.7) node[] {\tiny$\tau_3$};
		\end{tikzpicture}  \right)& = - 2 \, \begin{tikzpicture}[scale=0.4,baseline=-2]
		\coordinate (root) at (0,-0.4);
		\coordinate (t1) at (-0.3,0.5);
		\coordinate (t2) at (-1.1,0.5);
		\coordinate (t3) at (1.1,0.5);
		\coordinate (t4) at (0.3,0.5);
		\coordinate (tau1) at (1,0.6);
		\draw[symbols] (t1) -- (root);
		\draw[symbols] (t2) -- (root);
		\draw[symbols] (t3) -- (root);
		\draw[symbols] (t4) -- (root);
		\node[not] (rootnode) at (root) {};
		\draw (-1.2,0.7) node[] {\tiny$\tau_1$};
		\node[diff] (rootnode) at (root) {};
		\draw (1.3,0.7) node[] {\tiny$\tau_4$};
		\draw (-0.4,0.7) node[] {\tiny$\tau_{\tiny{2}}$};
		\draw (0.4,0.7) node[] {\tiny$\tau_3$};
	\end{tikzpicture} + (\cdots)
\\
\hat{\phi}_\geo \left( \begin{tikzpicture}[scale=0.4,baseline=-2]
	\coordinate (root) at (0,-0.4);
	\coordinate (t1) at (-0.3,0.5);
	\coordinate (t2) at (-1.1,0.5);
	\coordinate (t3) at (1.1,0.5);
	\coordinate (t4) at (0.3,0.5);
	\coordinate (tau1) at (1,0.6);
	\draw[kernels2] (t1) -- (root);
	\draw[kernels2] (t2) -- (root);
	\draw[symbols] (t3) -- (root);
	\draw[symbols] (t4) -- (root);
	\node[not] (rootnode) at (root) {};
	\draw (-1.2,0.7) node[] {\tiny$\tau_1$};
	\node[not] (rootnode) at (root) {};
	\draw (1.3,0.7) node[] {\tiny$\tau_4$};
	\draw (-0.4,0.7) node[] {\tiny$\tau_{\tiny{3}}$};
	\draw (0.4,0.7) node[] {\tiny$\tau_2$};
\end{tikzpicture}  \right) & = - 2 \, \begin{tikzpicture}[scale=0.4,baseline=-2]
	\coordinate (root) at (0,-0.4);
	\coordinate (t1) at (-0.3,0.5);
	\coordinate (t2) at (-1.1,0.5);
	\coordinate (t3) at (1.1,0.5);
	\coordinate (t4) at (0.3,0.5);
	\coordinate (tau1) at (1,0.6);
	\draw[symbols] (t1) -- (root);
	\draw[symbols] (t2) -- (root);
	\draw[symbols] (t3) -- (root);
	\draw[symbols] (t4) -- (root);
	\node[not] (rootnode) at (root) {};
	\draw (-1.2,0.7) node[] {\tiny $\tau_1$};
	\node[diff] (rootnode) at (root) {};
	\draw (1.3,0.7) node[] {\tiny $\tau_4$};
	\draw (-0.4,0.7) node[] {\tiny $\tau_{2}$};
	\draw (0.4,0.7) node[] {\tiny $\tau_3$};
\end{tikzpicture} + (\cdots).
	\end{equs}
	where the two formulae above are different. Indeed, the difference happens in the $(\cdots)$ that do not contain the same terms. 
	To perform this computation, one uses the definiton of $ \hat{\phi}_\geo $ on decorated given in \cite[Sec. 6.1]{BGHZ}.
If the two trees $\tau_2$ and $\tau_3$ are different then these trees produce two variables in the system with the same number of thick edges. This is where one has to determine the orbits. Then, one can understand why powerful algebraic tools such as  operad and homological algebra from \cite{BD24} are needed to compute the dimension of these spaces.
\end{remark}

\begin{example} \label{example_2}
We illustrate the previous lemma by writing up the system in the case of a generic element $v\in V_{\geo}^4$ using Table~\ref{tab:kpz_relevant}. We want to compute the equation \eqref{main_equation_system} when $z^{\tilde{\beta}}$ has the form 
\[
	 z_{ (\begin{tikzpicture}[scale=0.2,baseline=-2]
			\coordinate (root) at (0,0);
			\node[diff] (rootnode) at (root) {};
		\end{tikzpicture},4)} z^{\eta_4}\,,\quad z_{ (\begin{tikzpicture}[scale=0.2,baseline=-2]
		\coordinate (root) at (0,0);
		\node[diff] (rootnode) at (root) {};
	\end{tikzpicture},3)} z^{\eta_3}\,, \quad   z_{ (\begin{tikzpicture}[scale=0.2,baseline=-2]
	\coordinate (root) at (0,0);
	\node[diff] (rootnode) at (root) {};
\end{tikzpicture},2)}z^{\eta_2}\]
where $ z^{\eta_i} $ does not contain any variable of the form  $  z_{ (\begin{tikzpicture}[scale=0.2,baseline=-2]
		\coordinate (root) at (0,0);
		\node[diff] (rootnode) at (root) {};
	\end{tikzpicture},l)} $ for $l\geq 0$.  For $ z_{ (\begin{tikzpicture}[scale=0.2,baseline=-2]
		\coordinate (root) at (0,0);
		\node[diff] (rootnode) at (root) {};
	\end{tikzpicture},4)} z^{\eta_4}$   the associated multi-indices $z^{\beta}\in \mathfrak{M}_{\<generic>, \<derivative2>}^4$ such that  $ \langle z_{ (\begin{tikzpicture}[scale=0.2,baseline=-2]
		\coordinate (root) at (0,0);
		\node[diff] (rootnode) at (root) {};
	\end{tikzpicture},4)} z^{\eta_4}, \hat{\phi}_\geo(z^{\beta})  \rangle \neq 0$ are given by 
\begin{equs}
z^{\beta_0} =   z_{( \<derivative2>,2)}   z_{(\<generic>,0)}^4 =  z_{( \<derivative2>,2)}   z^{\eta}\,, \quad    z^{\hat{\beta}_0} = z_{(\<generic>,0)}^3z_{(\<generic>,3)} = z_{(\<generic>,3)} z^{\hat{\eta}_0},
\end{equs}
which correspond to the unique cofactor term $ z_{(\<generic>,0)}^4 = z^{\eta_4}$. From this one has the first relation
\begin{equs}
 -2	 \,  \langle z_{( \<derivative2>,2)}   z_{(\<generic>,0)}^4, v  \rangle  +  \, \langle z_{(\<generic>,0)}^3z_{(\<generic>,3)}, v \rangle = 0.
\end{equs}
Passing to $ z_{ (\begin{tikzpicture}[scale=0.2,baseline=-2]
		\coordinate (root) at (0,0);
		\node[diff] (rootnode) at (root) {};
	\end{tikzpicture},3)} z^{\eta_3}$, one has two possible choices for the cofactor $z^{\eta_3}=z_{ (\<generic>,0)}^4z_{( \<derivative2>,0)} $,  $z^{\eta_3}= z_{ (\<generic>,0)}^3 z_{ (\<generic>,1)}$ which imply the two conditions 
\begin{equs}
-2 \	\langle z_{ (\<generic>,0)}^4z_{( \<derivative2>,0)}z_{( \<derivative2>,1)} , v \rangle - \langle z_{( \<derivative2>,2)}   z_{(\<generic>,0)}^4, v\rangle + \langle z_{ (\<generic>,0)}^3z_{ (\<generic>,1)}z_{( \<derivative2>,0)}, v\rangle &= 0\,,
\\
-2 \	\langle  z_{ (\<generic>,0)}^3 z_{ (\<generic>,1)}z_{( \<derivative2>,1)}, v \rangle +  2\langle z_{ (\<generic>,0)}^3z_{ (\<generic>,3)} ,v \rangle + \langle z_{ (\<generic>,0)}^2z_{ (\<generic>,1)}z_{ (\<generic>,2)}, v \rangle  &= 0\, .
\end{equs}
In the last case $ z_{ (\begin{tikzpicture}[scale=0.2,baseline=-2]
		\coordinate (root) at (0,0);
		\node[diff] (rootnode) at (root) {};
	\end{tikzpicture},2)} z^{\eta_2} $, one has four possible choices for the cofactor $z^{\eta_2}=  z_{ (\<generic>,0)}^4$,   $z^{\eta_2}=z_{(\<generic>,0)}^2 z_{(\<generic>,1)}^2$,  $z^{\eta_2}= z_{ (\<generic>,0)}^3z_{ (\<generic>,1)}$,  $z^{\eta_2}= z_{ (\<generic>,0)}^4z_{(\<derivative2>,1)}$,  which imply the four  conditions 
\begin{equs}
&	-6 \	\langle z_{( \<derivative2>,0)}^3   z_{(\<generic>,0)}^4, v \rangle + \ \langle z_{(\<generic>,0)}^3 z_{(\<generic>,1)}, v \rangle  = 0\,,
\\&
	-2\	\langle z_{( \<derivative2>,0)}   z_{(\<generic>,0)}^2 z_{(\<generic>,1)}^2, v \rangle + 3\ \langle z_{(\<generic>,0)} z_{(\<generic>,1)}^3, v \rangle +\ \langle z_{(\<generic>,0)} z_{(\<generic>,1)}z_{(\<generic>,2)}, v \rangle = 0\,,\\&
	-2\langle z_{ (\<generic>,0)}^3z_{ (\<generic>,1)}z_{( \<derivative2>,0)}, v\rangle -4 \	\langle z_{( \<derivative2>,0)}^2   z_{(\<generic>,0)}^3 z_{(\<generic>,1)}, v \rangle + 2\ \langle z_{(\<generic>,0)}^2 z_{(\<generic>,1)}^2, v \rangle =0\,,\\&
	- 2\	\langle z_{ (\<generic>,0)}^4z_{(\<derivative2>,1)}z_{(\<derivative2>,0)} v \rangle + \langle z_{(\<generic>,0)}^3 z_{(\<generic>,1)}z_{(\<derivative2>,1)},v \rangle - 3\langle z_{( \<derivative2>,2)}   z_{(\<generic>,0)}^4,v \rangle = 0\,.
\end{equs}
Combining all of these linear relationship, we obtain a linear system reducing the dimension $10$ of $ \mathcal{M}_{ \<derivative2>, \<generic>}^4$ to a $3$-dimensional subspace and one has $3=\mathrm{Card}(\mathfrak{M}_{\<generic>}^4)$.
\end{example}

\subsection{Itô and nice terms}

We conclude the section by studying the two last vector spaces  $V_{\Ito}$ and $V_{\nice}$. In our case, the explicit form of the evaluation map $\Upsilon_{\Gamma, \sigma}$ and its multiplicative property of multi-indices allow to describe these spaces with some elementary arguments. Thanks to the simple condition $\sigma^2 = \bar{\sigma}^2$, we first show that $V_{\Ito}$ has a trivial structure and it coincides with the full space of multi-indices when the noise is Gaussian.

\begin{proposition} \label{thm_Ito}
One has $V_{\Ito}=\mathcal{M}_{ \<derivative2>, \<generic>}^{\text{G}}$.
\end{proposition}
\begin{proof}
It is sufficient to show the inclusion $\mathcal{M}_{ \<derivative2>, \<generic>}^{\text{G}}\subset V_{\Ito}$. Let $\beta\in \mathfrak{M}_{ \<derivative2>, \<generic>} ^{\text{G}}$ and  $\sigma$, $\bar{\sigma}$ two functions satisfying  the equality 
$ \sigma^2=\bar{\sigma}^2$.  The quadratic constraint and the smoothness of  $\sigma$, $\bar{\sigma}$ imply the identity $ \sigma(u)=(\pm 1)\bar{\sigma}(u)$ for any $u\in \mathbb{R}$  and  similarly $\sigma^{(k)}(u)=(\pm 1)\bar{\sigma}^{(k)}(u) $ for any  $k\geq 1$ with the same sign as $\sigma$ and $\bar{\sigma}$. Plugging this identity in the definition \eqref{def:ev_map}, we immediately obtain the equality
\begin{equation}\label{eq:proof_ito}
\Upsilon_{ \Gamma, \sigma}(z^{\beta})(u)= (\pm 1)^{((\beta))}\Upsilon_{ \Gamma, \bar{\sigma}}(z^{\beta})(u)\,,
\end{equation}
from which we obtain the equality since $((\beta))$ is even. % Conversely, we can use the same identity \eqref{eq:proof_ito} to show that if $\beta\not \in \mathfrak{M}_{ \<derivative2>, \<generic>} ^{\text{G}} $ we can find two functions $ \sigma $, $\bar{\sigma}$ such that $ \sigma^2=\bar{\sigma}^2$ but $\Upsilon_{ \Gamma, \sigma}(z^{\beta})(u)\neq \Upsilon_{ \Gamma, \bar{\sigma}}(z^{\beta})(u)$ for some $u\in \mathbb{R}$. Hence the other inclusion $V_{\Ito}\subset \mathcal{M}_{ \<derivative2>, \<generic>}^{\text{G}}$ holds.
\end{proof}
We can use similar arguments for $V_{\nice}$.

\begin{proposition} \label{thm_nice}
One has that $V_{\nice}$ is generated by all the multi-indices containing $z_{(\<generic>,1)}$ or $z_{(\<derivative2>,0)}$.
\end{proposition}
\begin{proof}
The proof follows immediately for the Definition in \eqref{def:ev_map}.
\end{proof}

\begin{example} \label{example_3}
Combining the results in Proposition~\ref{thm_Ito}, Proposition~\ref{thm_nice}, and Theorem~\ref{dim_geo}, we obtain a full description of these vector spaces and their intersections in the case when $\xi$ is a space-time white noise. In this case, one has
\[ V_{\geo} = V_{\geo}\cap V_{\Ito}=\langle \Nabla_{\<generic>}  \<generic>\,,\Nabla_{\<generic>} 	\Nabla_{\<generic>} 	\Nabla_{\<generic>}  \<generic>, \, 	\Nabla_{\<generic>} 	\Nabla_{\Nabla_{\<generic>}  \<generic> }   \<generic>, \,
\Nabla_{\Nabla_{\Nabla_{\<generic>} \<generic>}   \<generic> }   \<generic>\rangle\,.\]
Moreover, by simply checking the elements in Table~\ref{tab:kpz_relevant} with $((\beta))=2,4$, one finds that $V_{\nice}$ contains all the multi-indices with $((\beta))=2,4$, with the exceptions of $z_{ (\<generic>,0)}^3z_{ (\<generic>,3)}$,  and $z_{ (\<generic>,0)}^4z_{( \<derivative2>,2)}$, leading to a vector space of dimension $10$. We can also directly compute 
\[V_{\geo} \cap V_{\nice}=\langle \Nabla_{\<generic>}  \<generic>\,, \, 	\Nabla_{\<generic>} 	\Nabla_{\Nabla_{\<generic>}  \<generic> }   \<generic>, \,
\Nabla_{\Nabla_{\Nabla_{\<generic>} \<generic>}   \<generic> }   \<generic>\rangle\,,\]
by simply remarking that each term $\Nabla_{\<generic>}  \<generic>\,, \, 	\Nabla_{\<generic>} 	\Nabla_{\Nabla_{\<generic>}  \<generic> }   \<generic>, \,
\Nabla_{\Nabla_{\Nabla_{\<generic>} \<generic>}   \<generic> }   \<generic>$ are linear combination of elements in  $V_{\nice}$ and that one has 
\[\Nabla_{\<generic>} 	\Nabla_{\<generic>} 	\Nabla_{\<generic>}  \<generic>= z_{ (\<generic>,0)}^3z_{ (\<generic>,3)}+ \frac{1}{2}z_{ (\<generic>,0)}^4z_{( \<derivative2>,2)}+w\]
with $w\in V_{\nice}$. Therefore $\Nabla_{\<generic>} 	\Nabla_{\<generic>} 	\Nabla_{\<generic>}  \<generic>\not \in V_{\nice}$. Both results differ from the analogous computations obtained with trees in \cite[Proposition 3.18, Corollary 3.20]{BGHZ}. In that context, the presence of a non-trivial It\^o vector space tightens the dimension, leading to an intersection of their geometric and It\^o spaces of dimension $2$ and an intersection with $V_{\nice}$ of dimension one. In our case, even if the presence of $V_{\nice}$ also reduces the dimension by one order, we are still far from achieving the same degree of dimensional reduction. We also expect that similar considerations holds when the noise $\xi$ is just subcritical.
\end{example}
\section{The renormalised equation and its convergence}\label{sec:4}
We finally transfer the optimal choice of the constants to the level of the renormalised equation by including the new non-linearities. Indeed, by plugging  a generic constant  $c\in (\mathcal{T}_-)^* $ into the renormalised model, the solution ansatz on multi-indices, given in \cite{BL23,LOT} produces the following renormalised equation
\begin{equs}[eq:renorm_constant]
\d_t u_{\eps}  &= \d_x^2 u_{\eps} + \Gamma(u_{\eps})\,(\d_x u_{\eps})^2
		+ g(u_{\eps})\,\d_x u_{\eps}
		+h(u_{\eps}) + \sigma(u_{\eps})\, \xi_{\eps}\\&+ \sum_{\beta\in T_-} \frac{c_{\beta}}{\sigma(\beta)}\Upsilon_{\Gamma,\sigma}(z^{\beta})(u_{\eps},\d_x u_{\eps})
\end{equs}
with $\sigma(\beta)$ a \textbf{symmetry factor}, depending only on the values of $\beta$ over $\mathcal{N}$
\[\sigma(\beta)= \prod_{k\in \mathcal{N}_0}(k(\mathbf{0})!)^{\beta(0,k)} \prod_{m\in \mathcal{N}_{\Xi}}(k(\mathbf{0})!)^{\beta(\Xi,m)}\,.\]

Using  the constants
\begin{equation}\label{BPHZ_choice}
C_{\eps}(z_\beta)=\frac{c^{\BPHZ}_{\varepsilon}(\beta)}{\sigma(\beta)}
\end{equation}
 we can finally express \eqref{eq:renorm_constant} into its ``BPHZ version"
 
 \begin{equs}[eq:renorm_constant3]
 	\d_t u_{\eps}  &= \d_x^2 u_{\eps} + \Gamma(u_{\eps})\,(\d_x u_{\eps})^2
 	+ g(u_{\eps})\,\d_x u_{\eps}
 	+h(u_{\eps}) + \sigma(u_{\eps})\, \xi_{\eps}\\&+ \sum_{\beta\in \mathfrak{M}_{ \<derivative2>, \<generic>}^-} C_{\eps}(z_\beta)\Upsilon_{\Gamma,\sigma}(z^{\beta})(u_{\eps})
 \end{equs}
 
  Combining several results in the literature with the algebraic results in Section~\ref{Sec::3}, we  finish this section by proving Theorem~\ref{thm:main renormalisation_intro}.

\begin{proof}[Theorem~\ref{thm:main renormalisation_intro}]
	From \cite{BGHZ} and the results contained in \cite{reg,BHZ,CH,BCCH}, one can rewrite the renormalised equation for the set of reduced decorated trees:
		\begin{equs}[eq:renorm nonlocal intro3]
		\d_t u_{\eps} & = \d_x^2 u_{\eps} + \Gamma(u_{\eps})\,\d_x u_{\eps}\d_x u_{\eps}
		+ g(u_{\eps})\,\d_x u_{\eps}
		+h(u_{\eps}) + \sigma(u_{\eps})\, \xi_{\eps}\; \\ & + \sum_{\tau \in \mathfrak{T}_{ \<derivative2>, \<generic>}^-} C_{\eps}(\tau)  \tilde{\Upsilon}_{\Gamma,\sigma}(\tau)(u_{\eps})\,.
	\end{equs}
where $\mathfrak{T}_{ \<derivative2>, \<generic>}^- =\Psi^{-1}(\mathfrak{M}_{ \<derivative2>, \<generic>}^-)$, the map $ \tilde{\Upsilon}_{\Gamma,\sigma} $ is defined on decorated trees with the following recursion
\begin{equs}
	\tilde{\Upsilon}_{\Gamma,\sigma} \left( \begin{tikzpicture}[scale=0.4,baseline=-2]
		\coordinate (root) at (0,-0.4);
		\coordinate (t2) at (-0.8,0.5);
		\coordinate (t3) at (0.8,0.5);
		\draw[symbols] (t2) -- (root);
		\draw[symbols] (t3) -- (root);
		\draw (0,-0.4) node[] {\<generic>};
		\draw (-0.9,0.7) node[] {\tiny$\sigma_1$};
		\draw (0.9,0.7) node[] {\tiny$\sigma_m$};
		\draw (0,0.7) node[] {\tiny$\cdots$};
	\end{tikzpicture} \right)(u) & = \sigma^{(m)}(u) \prod_{i=1}^m 	\tilde{\Upsilon}_{\Gamma,\sigma} (\sigma_i), \\	\tilde{\Upsilon}_{\Gamma,\sigma} \left( \begin{tikzpicture}[scale=0.4,baseline=-2]
		\coordinate (root) at (0,-0.4);
		\coordinate (t1) at (-0.3,0.5);
		\coordinate (t2) at (-1.1,0.5);
		\coordinate (t3) at (1.1,0.5);
		\coordinate (t4) at (1.1,0.5);
		\coordinate (tau1) at (1,0.6);
		\draw[kernels2] (t1) -- (root);
		\draw[kernels2] (t2) -- (root);
		\draw[symbols] (t3) -- (root);
		\node[not] (rootnode) at (root) {};
		\draw (-1.2,0.7) node[] {\tiny$\tau_1$};
		\node[not] (rootnode) at (root) {};
		\draw (1.3,0.7) node[] {\tiny$\tau_n$};
		\draw (-0.4,0.7) node[] {\tiny$\tau_{\tiny{2}}$};
		\draw (0.5,0.7) node[] {\tiny$\cdots$};
	\end{tikzpicture} \right)(u) &= 2\Gamma^{(n-2)}(u) \prod_{i=1}^n 	\tilde{\Upsilon}_{\Gamma,\sigma} (\tau_i),
\end{equs} 
(see e.g. \cite[Sec. 3]{BGHZ}) and constants $C_{\eps}(\tau) $ are chosen 
to be the BPHZ renormalisation constant, see \cite{BHZ,CH}, then the solution map $u_{\eps}(\Gamma, \sigma,h)$ converges in probability locally in time to a proper continuous random field $ u^{\text{\tiny BPHZ}}(\Gamma, \sigma,h) $, whose law $ U^{\text{\tiny BPHZ}}(\Gamma, \sigma,h) $ is injective in $h$ from \cite[Thm. 3.5]{BGHZ}. This fact is crucial for arguing that the solution will be equivariant under the action of diffeomorphisms.
 
 The next step is to rewrite the counter-terms via multi-indices by noticing that
 \begin{equs} \label{rewrite_multi}
\tilde{\Upsilon}_{\Gamma, \sigma}( \tau ) = \Upsilon_{\Gamma, \sigma}( \Psi(\tau) )
\end{equs}
The identity \eqref{rewrite_multi} is proved by induction noticing that
\begin{equs}
	\tilde{\Upsilon}_{\Gamma,\sigma} \left( \begin{tikzpicture}[scale=0.4,baseline=-2]
		\coordinate (root) at (0,-0.4);
		\coordinate (t1) at (-0.3,0.5);
		\coordinate (t2) at (-1.1,0.5);
		\coordinate (t3) at (1.1,0.5);
		\coordinate (t4) at (1.1,0.5);
		\coordinate (tau1) at (1,0.6);
		\draw[kernels2] (t1) -- (root);
		\draw[kernels2] (t2) -- (root);
		\draw[symbols] (t3) -- (root);
		\node[not] (rootnode) at (root) {};
		\draw (-1.2,0.7) node[] {\tiny$\tau_1$};
		\node[not] (rootnode) at (root) {};
		\draw (1.3,0.7) node[] {\tiny$\tau_n$};
		\draw (-0.4,0.7) node[] {\tiny$\tau_{\tiny{2}}$};
		\draw (0.5,0.7) node[] {\tiny$\cdots$};
	\end{tikzpicture} \right) (u)&= 2\Gamma^{(n-2)}(u) \prod_{i=1}^n 	\tilde{\Upsilon}_{\Gamma,\sigma} (\tau_i)
= 2\Gamma^{(n-2)}(u) \prod_{i=1}^n 	\Upsilon_{\Gamma,\sigma} (\Psi(\tau_i))
\\ & = \Upsilon_{\Gamma,\sigma} (z_{( \<derivative2>,n+ 2)}  \prod_{i=1}^n \Psi(\tau_i))(u)
  =  	\Upsilon_{\Gamma,\sigma} \left( \Psi ( \begin{tikzpicture}[scale=0.4,baseline=-2]
	\coordinate (root) at (0,-0.4);
	\coordinate (t1) at (-0.3,0.5);
	\coordinate (t2) at (-1.1,0.5);
	\coordinate (t3) at (1.1,0.5);
	\coordinate (t4) at (1.1,0.5);
	\coordinate (tau1) at (1,0.6);
	\draw[kernels2] (t1) -- (root);
	\draw[kernels2] (t2) -- (root);
	\draw[symbols] (t3) -- (root);
	\node[not] (rootnode) at (root) {};
	\draw (-1.2,0.7) node[] {\tiny$\tau_1$};
	\node[not] (rootnode) at (root) {};
	\draw (1.3,0.7) node[] {\tiny$\tau_n$};
	\draw (-0.4,0.7) node[] {\tiny$\tau_{\tiny{2}}$};
	\draw (0.5,0.7) node[] {\tiny$\cdots$};
\end{tikzpicture} ) \right)(u)\,,
	\end{equs}
where for the first line, we have used the induction hypothesis on the $\tau_i$,  $  	\tilde{\Upsilon}_{\Gamma,\sigma}(\tau_i)
= \Upsilon_{\Gamma,\sigma}(\Psi(\tau_i)) $.
The second line is just the multiplicativity of $ \Upsilon_{\Gamma, \sigma} $ on multi-indices. Then, we use the recursive definition of the map $\Psi$ given in \eqref{def_Psi}. The computation for the decorated trees without brown edges works exactly the same. Then  one rewrites the counter-terms as
\begin{equs}
	\sum_{\tau \in \ \mathfrak{T}_{ \<derivative2>, \<generic>}^-} C_{\eps}(\tau)  \tilde{\Upsilon}_{\Gamma,\sigma}(\tau)(u_{\eps}) & =  \sum_{z^{\beta} \in \ \mathfrak{M}_{ \<derivative2>, \<generic>}^-} \sum_{z^{\beta} = \Psi(\tau)} C_{\eps}(\tau)  \tilde{\Upsilon}_{\Gamma,\sigma}(\tau)(u_{\eps})
	\\ & = \sum_{z^{\beta} \in \ \mathfrak{M}_{ \<derivative2>, \<generic>}^-} \left(  \sum_{z^{\beta} = \Psi(\tau)} C_{\eps}(\tau)  \right) \Upsilon_{\Gamma,\sigma}(z^{\beta})(u_{\eps})
\end{equs}
where one has 
\begin{equs}
	   C_{\eps}(z^{\beta}) = \sum_{z^{\beta} = \Psi(\tau)} C_{\eps}(\tau) 
\end{equs}
as a consequence for the uniqueness of the BPHZ constants in Theorem~\ref{BPHZ_theorem} and \cite[Thm 6.18]{BHZ}. Then, changing one of the constants $C_{\eps}(\tau)$ with $ z^{\beta} = \Psi(\tau) $ corresponds to change $ C_{\eps}(z^{\beta}) $.   From \cite[Prop. 3.9]{BGHZ}, one can get rid of  the counter-terms on the orthogonal complement of $  V_{\geo} $ and get a choice of renormalisation constants $ C_{\eps}(\tau) $ that guarantees the convergence of $  u_{\eps}$ and such that
\begin{equs}
\sum_{z^{\beta} \in \ \mathfrak{M}_{ \<derivative2>, \<generic>}^-} C_{\eps}(z^{\beta})  \Upsilon_{\Gamma,\sigma}(z^{\beta})(u_{\eps}) = \sum_{v \in \mathfrak{B}_{\xi}} C_{\eps}(v)  \Upsilon_{\Gamma,\sigma}(v)(u_{\eps})
\end{equs}
where the constants  $ C_{\eps}(v) $ depend on the $  C_{\eps}(z^{\beta}) $ and here $  \mathfrak{B}_{\xi}$ defined in \eqref{def_B_xi} is a basis of  $ V_{\geo}^{\xi} $ defined in \eqref{geo_space} whose dimension is given in Theorem~\ref{dim_geo}. This guarantees that $U$, the law of the limit solution  is invariant under the action of diffeomorphisms. Passing to the case when $\xi$ is a Gaussian subcritical noise, using Proposition~\ref{prop:reduction}  we can repeat all the previous arguments starting from the set $(\mathfrak{M}_{ \<derivative2>, \<generic>}^{\text{G}}) ^-$ and not $\mathfrak{M}_{ \<derivative2>, \<generic>} ^-$ at the beginning of the proof and obtain the same result with $\mathfrak{B}_{ \xi}^{\text{G}}$ instead of $\mathfrak{B}_{ \xi}$. From Proposition~\ref{thm_Ito}, we deduce that all the counter-terms $v \in \mathfrak{B}_{ \xi}^{\text{G}} $ belong trivially to $V_{\Ito}$ which implies for $ \sigma^2 = \bar{\sigma}^2 $
\begin{equs}
	(\Upsilon_{\Gamma, \bar{\sigma}} - 	\Upsilon_{\Gamma, \sigma}) v = 0.
\end{equs}
 Then, we conclude from \cite[Sec. 3.5]{BGHZ}, where one has for $\sigma^2 = \bar{\sigma}^2$
\begin{equs}
	U^{\geo}(\Gamma, \sigma,h) = 	U^{\geo}(\Gamma, \bar{\sigma},h + \left( \tilde{\Upsilon}_{\Gamma, \bar{\sigma}} - \tilde{\Upsilon}_{\Gamma, \sigma} \right)\tau_0)
\end{equs}
where $\tau_0$ is a linear combination of decorated trees and
 $ 	U^{\geo}(\Gamma, \sigma,h) $ is the law of the geometric approximation of the solution. Then, we use \eqref{rewrite_multi} to get
 \begin{equs}
 	U^{\geo}(\Gamma, \sigma,h) & = 	U^{\geo}(\Gamma, \bar{\sigma},h + \left( \Upsilon_{\Gamma, \bar{\sigma}} - \Upsilon_{\Gamma, \sigma} \right) \Psi(\tau_0))
 = U^{\geo}(\Gamma, \bar{\sigma},h)
 \end{equs}
 where we have used the fact that $  \Psi(\tau_0) \in \mathfrak{B}_{ \xi}^{\text{G}}  $ which implies
 \begin{equs}
 	\left( \Upsilon_{\Gamma, \bar{\sigma}} - \Upsilon_{\Gamma, \sigma} \right) \Psi(\tau_0) = 0.
 \end{equs}
	\end{proof}

\end{document}